\newtheoremstyle{example}{\topsep}{\topsep}%
     {\slshape}
     {}
     {\bfseries}
     {.}
     {  }
     {}
\theoremstyle{plain}
\newtheorem{thm}{Theorem}[section]
\newtheorem*{thm*}{Theorem}
\newtheorem{lem}[thm]{Lemma}
\newtheorem{cor}[thm]{Corollary}
\newtheorem*{cor*}{Corollary}
\newtheorem{prop}[thm]{Proposition}
\newtheorem*{prop*}{Proposition}
\theoremstyle{definition}
\newtheorem{defn}[thm]{Definition}
\theoremstyle{remark}
\newtheorem{ex}[thm]{Example}
\newtheorem{rmk}[thm]{Remark}
\newcommand{\N}{\mathbb{N}}
\newcommand{\R}{\mathbb{R}}
\newcommand{\Q}{\mathbb{Q}}
\newcommand{\F}{\mathcal{F}}
\newcommand{\B}{\mathcal{B}}
\newcommand{\W}{\mathcal{W}}
\newcommand{\X}{\mathcal{X}}
\newcommand{\Y}{\mathcal{Y}}
\newcommand{\G}{\mathcal{G}}
\newcommand{\K}{\mathcal{K}}
\newcommand{\limM}{\lim_{M \rightarrow \infty}}
\title[Dominating Martingale Measures]{The Existence of Dominating Local Martingale Measures}
\author[P. Imkeller]{Peter Imkeller}
\author[N. Perkowski]{Nicolas Perkowski}
\address{Institut f\"{u}r Mathematik\\
Humboldt-Universit\"{a}t zu Berlin\\
Rudower Chaussee 25\\
12489 Berlin\\
Germany}
\email{imkeller@math.hu-berlin.de, perkowsk@math.hu-berlin.de}
\date{\today}
\begin{document}

\begin{abstract}
   We prove that, for locally bounded processes, absence of arbitrage opportunities of the first kind is equivalent to the existence of a dominating local martingale measure. This is related to and motivated by results from the theory of filtration enlargements.
\end{abstract}

\keywords{dominating local martingale measure; arbitrage of the first kind; fundamental theorem of asset pricing; supermartingale densities;  F\"ollmer's measure; enlargement of filtration; Jacod's criterion}
\subjclass[2010]{91G10, 60G44, 60H05, 46N10}

\maketitle


\section{Introduction}

It may be argued that the foundation of financial mathematics consists in giving a mathematical characterization of market models satisfying certain financial axioms. This leads to so-called \emph{fundamental theorems of asset pricing}. Harrison and Pliska \cite{Harrison1981} were the first to observe that, on finite probability spaces, the absence of arbitrage opportunities (condition \emph{no arbitrage, (NA)}) is equivalent to the existence of an equivalent martingale measure. A definite version was shown by Delbaen and Schachermayer \cite{Delbaen1994}. Their result, commonly referred to as \emph{the} Fundamental Theorem of Asset Pricing, states that for locally bounded semimartingale models there exists an equivalent probability measure under which the price process is a local martingale, if and only if the market satisfies the condition \emph{no free lunch with vanishing risk (NFLVR)}. Delbaen and Schachermayer also observed that (NFLVR) is satisfied if and only if there are no arbitrage opportunities (i.e. (NA) holds), and if further it is not possible to make an unbounded profit with bounded risk (we say there are \emph{no arbitrage opportunities of the first kind}, condition \emph{(NA1)} holds). Since in finite discrete time, (NA) is equivalent to the existence of an equivalent martingale measure, it was then a natural question how to characterize continuous time market models satisfying only (NA) and not necessarily (NA1). For continuous price processes, this was achieved by Delbaen and Schachermayer \cite{Delbaen1995a}, who show that (NA) implies the existence of an \emph{absolutely continuous} local martingale measure.


Here we complement this program, by proving that for locally bounded processes, (NA1) is equivalent to the existence of a \emph{dominating} local martingale measure. Constructing dominating probability measures is rather delicate, and F\"ollmer's measure (\cite{Follmer1972}) associated to a nonnegative supermartingale appears naturally in this context.


Let us give a more precise description of the notions of arbitrage considered in this work, and of the obtained results.


Let $S$ be a $d$-dimensional stochastic process on a filtered probability space $(\Omega, \F, (\F_t)_{t \ge 0}, P)$. We assume throughout the paper that the filtration $(\F_t)$ is right-continuous, and that $\F = \F_\infty = \vee_{t \ge 0} \F_t$. We think of $S$ as the (discounted) price process of $d$ financial assets.

We should point out that the filtration $(\F_t)$ will \emph{not} be complete with respect to $P$. Our aim is to construct dominating measures which may charge $P$-null sets. Therefore we cannot complete the filtration by the $P$-null sets, hence we only assume that $(\F_t)$ is right-continuous. This means that we have to slightly deviate from the usual definition of a semimartingale. For us, a semimartingale is the sum of a local martingale and a process of finite variation on bounded intervals. However we only assume that semimartingales are almost surely (a.s.) c\`adl\`ag. A semimartingale does \emph{not} need to be c\`adl\`ag for \emph{every} $\omega \in \Omega$. Our definition follows Jacod and Shiryaev \cite{Jacod2003}, who also work with non-complete filtrations. We argue in Section \ref{sec: supermartingale densities} and Appendix \ref{app:complete filtration} that the non-completeness of our filtration will not pose any problem.


A \emph{strategy} is a predictable process $(H_t)_{t\ge 0}$ with values in $\R^d$. If $S$ is a semimartingale and $\lambda > 0$, then a strategy $H$ is called \emph{$\lambda$-admissible} (for $S$) if the stochastic integral $H\cdot S = \int_0^\cdot H_s \cdot dS_s$ exists and satisfies $P((H\cdot S)_t \ge -\lambda)=1$ for all $t \ge 0$. Here we write $a \cdot b = \sum_{i=1}^d a_i b_i$ for the usual inner product on $\R^d$.

Similarly, a \emph{simple strategy} is a process of the form $H_t = \sum_{j=0}^{n-1} F_j 1_{(T_j, T_{j+1}]}(t)$ for stopping times $0 \le T_0 < T_1 < \dots < T_n < \infty$ and bounded $\F_{T_j}$-measurable random variables $F_j$ with values in $\R^d$. If $S$ is a right-continuous adapted process, then the integral $H\cdot S$ is defined as
\begin{align*}
   (H\cdot S)_t = \sum_{j \ge 0} F_j (S_{T_{j+1}\wedge t} - S_{T_j\wedge t}),
\end{align*}
and $\lambda$-admissible strategies are defined analogously to the semimartingale case.

We denote by
\begin{align}\label{eq:W1}
   \W_1 = \{1 + (H\cdot S)_\cdot : H \text{ is a 1-admissible strategy and } (H\cdot S)_t \text{ a.s. converges as } t \rightarrow \infty \}
\end{align}
all wealth processes obtained by using 1-admissible strategies under initial wealth 1, and such that the terminal wealth is well defined. Similarly we define
\begin{align*}
   \W_{1,s} = \{1 + (H\cdot S)_\cdot : H \text{ is a 1-admissible simple strategy} \}.
\end{align*}
Note that the convergence condition in \eqref{eq:W1} is trivially satisfied for simple strategies. We will also need
\begin{align}\label{eq:K1}
   \K_1 = \{X_\infty : X \in \W_1 \} \qquad \text{and} \qquad \K_{1,s} = \{X_\infty : X \in \W_{1,s} \},
\end{align}
i.e. all terminal wealths that are attainable with initial wealth 1 and using 1-admissible strategies.

We write $L^0 = L^0(\Omega, \F, P)$ for the space of real-valued random variables on $(\Omega, \F)$, where we identify random variables that are $P$-almost surely equal. We equip $L^0$ with the distance $d(X,Y) = E(|X-Y|\wedge 1)$, under which it becomes a complete metric space.

Recall that a family of random variables $\X$ is called \emph{bounded in probability}, or \emph{bounded in $L^0$}, if
\begin{align*}
   \limM \sup_{X \in \X} P(|X| \ge M) = 0.
\end{align*}

\begin{defn}
   We say that a semimartingale $S$ satisfies \emph{no arbitrage of the first kind (NA1)} if $\K_1$ is bounded in probability. We say that $S$ satisfies \emph{no arbitrage (NA)} if there is no $X \in \K_1$ with $X \ge 1$ and $P(X > 1) > 0$, . If both (NA1) and (NA) hold, we say that $S$ satisfies \emph{no free lunch with vanishing risk (NFLVR)}.
   
   Similarly we say that a right-continuous adapted process $S$ satisfies \emph{no arbitrage of the first kind with simple strategies (NA1$_s$)}, \emph{no arbitrage with simple strategies (NA$_s$)}, or \emph{no free lunch with vanishing risk with simple strategies (NFLVR$_s$)}, if $\K_{1,s}$ satisfies the corresponding conditions.
\end{defn}

Heuristically, (NA) says that it is not possible to make a profit without taking a risk. (NA1) says that is not possible to make an unbounded profit if the risk remains bounded. This is why (NA1) is also referred to as ``no unbounded profit with bounded risk'' (NUPBR), see for example Karatzas and Kardaras \cite{Karatzas2007}.

The main result of this paper is that for locally bounded semimartingales $S$, (NA1) is equivalent to the existence of a dominating local martingale measure. As a byproduct of the proof, we obtain that a locally bounded, right-continuous, and adapted process $S$ that satisfies (NA1$_s$) is already a semimartingale, and in this case $S$ also satisfies (NA1).

When constructing absolutely continuous probability measures, it suffices to work with random variables. In Section \ref{sec: motivation} below, we argue that dominating measures correspond to nonnegative supermartingales with strictly positive terminal values. We also show that a dominating local martingale measure corresponds to a supermartingale density in the following sense.

\begin{defn}\label{def:supermartingale density}
   Let $\Y$ be a family of stochastic processes. A \emph{supermartingale density} for $\Y$ is an almost surely c\`adl\`ag and nonnegative supermartingale $Z$ with $Z_\infty = \lim_{t \rightarrow \infty} Z_t > 0$ a.s., such that $YZ$ is a supermartingale for every $Y\in \Y$.
   
   If all processes in $\Y$ are of the form $1 + (H\cdot S)$ for suitable integrands $H$, and if $Z$ is a supermartingale density for $\Y$, then we will sometimes call $Z$ a supermartingale density for $S$.
\end{defn}

In the literature, supermartingale densities are usually referred to as \emph{supermartingale deflators}. We think of a supermartingale density as the ``Radon-Nikodym derivative'' $dQ/dP$ of a dominating measure $Q \gg P$. This is why we prefer the term supermartingale density.

First we give an alternative proof of a well-known result.

\begin{thm}\label{thm: ex supermartingale density}
   Let $S$ be a $d$-dimensional adapted process, a.s. right-continuous (resp. a $d$-dimensional semimartingale). Then (NA1$_s$) (resp. (NA1)) holds if and only if there exists a supermartingale density for $\W_{1,s}$ (resp. for $\W_1$).
\end{thm}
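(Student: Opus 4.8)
The plan is to prove both implications, starting with the easier direction. If a supermartingale density $Z$ for $\W_{1,s}$ (resp. $\W_1$) exists, then for every $1$-admissible (simple) strategy $H$ the process $Z(1 + H\cdot S)$ is a nonnegative supermartingale, hence converges a.s. as $t\to\infty$ to some $(Z(1+H\cdot S))_\infty$ with $E[(Z(1+H\cdot S))_\infty] \le Z_0$. Since $Z_\infty > 0$ a.s., I can recover a bound on the terminal wealth $X_\infty = 1 + (H\cdot S)_\infty$: for any $M > 0$,
\begin{align*}
   P(X_\infty \ge M) \le P(Z_\infty X_\infty \ge M\delta) + P(Z_\infty < \delta) \le \frac{Z_0}{M\delta} + P(Z_\infty < \delta),
\end{align*}
using Markov's inequality on the supermartingale. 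Choosing $\delta$ small and then $M$ large shows $\K_1$ (resp. $\K_{1,s}$) is bounded in probability, i.e. (NA1) (resp. (NA1$_s$)) holds. This argument is uniform in $H$, which is exactly what boundedness in probability requires.

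For the converse — the substantive direction — I would follow the now-standard approach via a ``numéraire'' or ``log-optimal'' type portfolio, adapted to the supermartingale setting; this goes back to work of Karatzas--Kardaras and Kardaras, and in the discrete-skeleton form is close to Kabanov's argument. The idea is: (NA1$_s$) says $\K_{1,s}$ is bounded in probability, equivalently $\W_{1,s}$ is bounded in probability uniformly in time (a maximal inequality for nonnegative-after-shift wealth processes, or a direct argument on the convex bounded-in-probability set), so one can find a maximal element — a wealth process $\widehat X = 1 + \widehat H \cdot S \in \W_{1,s}$ such that $X/\widehat X$ is a supermartingale for every $X \in \W_{1,s}$. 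Then $Z := 1/\widehat X$ is the desired supermartingale density: taking $X \equiv 1$ shows $Z$ itself is a supermartingale, $Z$ is nonnegative and a.s. càdlàg, $Z_\infty > 0$ because $\widehat X$ does not explode (this is where (NA1) re-enters, guaranteeing $\widehat X_t$ converges to a finite limit), and $XZ = X/\widehat X$ is a supermartingale for each $X \in \W_{1,s}$ by the maximality of $\widehat X$. The existence of the maximal element is obtained by an optimization: maximize $E[\log X_\infty]$ (or $E[U(X_\infty)]$ for a suitable bounded concave $U$, to avoid integrability issues) over $\K_{1,s}$, using the bounded-in-probability hypothesis together with a Komlós-type lemma (taking convex combinations that converge a.s.) to extract a maximizer, and then verifying the supermartingale property of ratios via a variational inequality. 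For the semimartingale case one works directly with $\W_1$ and general $1$-admissible integrands; the passage between the simple and general cases, and the fact that (NA1$_s$) for a locally bounded process forces $S$ to be a semimartingale, presumably relies on an earlier result in the paper or on a Bichteler--Dellacherie-type argument.

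The main obstacle I anticipate is the construction of the maximal wealth process $\widehat X$ as an honest process (not just its terminal value) together with the verification that $X/\widehat X$ is a supermartingale simultaneously for all $X\in\W_{1,s}$. One must argue that the pointwise-in-time optima can be aggregated into a single càdlàg supermartingale — typically by first solving the problem on a countable dense set of times via the bounded-in-probability property and Komlós, then invoking right-continuity of the filtration and a supermartingale regularization (Doob) to get a càdlàg version — and then establish the ratio supermartingale property by a perturbation argument: given $X\in\W_{1,s}$ and a stopping time $\sigma$, the strategy that follows $\widehat H$ until $\sigma$ and then switches to a mixture with the strategy generating $X$ stays $1$-admissible, and optimality of $\widehat X$ yields the conditional inequality $E[(X/\widehat X)_\tau \mid \F_\sigma] \le (X/\widehat X)_\sigma$. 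A secondary technical point, flagged already in the introduction, is that the filtration is not $P$-complete, so one must be careful that all ``a.s.'' statements and regularized versions are adapted to $(\F_t)$ itself; the paper's Section on supermartingale densities and its appendix are presumably invoked to handle exactly this.
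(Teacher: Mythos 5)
Your easy direction (supermartingale density $\Rightarrow$ (NA1)/(NA1$_s$)) is correct and essentially the same argument as the paper's, except that the paper routes it through Lemma~\ref{prop: l0-bded iff Z ex} (Yan's theorem in reverse) rather than a hand-rolled Markov bound; also note the numerator in your Markov inequality should be $E(Z_0)$ rather than $Z_0$, but this is cosmetic.

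For the substantive direction you propose the num\'eraire/log-optimal portfolio construction of Karatzas--Kardaras: find a maximal wealth process $\widehat X$ with $X/\widehat X$ a supermartingale for all $X$, and set $Z=1/\widehat X$. This is genuinely different from the route taken here. The paper reduces Theorem~\ref{thm: ex supermartingale density} to Rokhlin's abstract statement (Theorem~\ref{thm:abstract ex supermartingale density}), proved by (i) a two--time-step Radon--Nikodym construction for fork-convex families (Lemma~\ref{lem: one period supermartingale}), (ii) induction to finite discrete time (Corollary~\ref{cor: discrete time supermartingale}), and (iii) a passage to the rational skeleton in continuous time using Zitkovic's convex compactness together with a Tychonoff-type theorem (Lemma~\ref{lem: Q indexed supermartingale}, Proposition~\ref{prop: tychonoff result}), followed by supermartingale regularization. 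The paper deliberately avoids the num\'eraire route and says so explicitly: the Karatzas--Kardaras construction relies on semimartingale characteristics of $S$, which do not exist for a merely right-continuous adapted process, so it cannot handle the (NA1$_s$) case of the theorem as stated. Your sketch implicitly assumes you can first establish that $S$ is a semimartingale, but in the paper that fact (Corollary~\ref{cor:NA1 implies semimartingale}) is a \emph{consequence} of Theorem~\ref{thm: ex supermartingale density} (via $Y = (1/Z)(ZY)$ in Corollary~\ref{cor:abstract semimartingale}), and it fails outright for unbounded processes --- the paper gives a L\'evy plus fractional Brownian motion example satisfying (NA1$_s$) that is not a semimartingale. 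So for the (NA1$_s$) half of the theorem the num\'eraire approach would either be circular or simply unavailable. A second, smaller point: the optimizer $\widehat X$ is not generally an element of $\W_{1,s}$ (it arises as an $L^0$-limit of convex combinations of simple wealth processes), so the clean perturbation argument you describe for the ratio supermartingale property would need to be set up on a closure of $\W_{1,s}$, an issue you flag as your ``main obstacle'' but do not resolve. For the semimartingale case of the theorem your route is sound and is precisely the Karatzas--Kardaras proof the paper cites; what the paper's fork-convexity/convex compactness argument buys is uniform treatment of both cases (and indeed the greater generality of Rokhlin's theorem) without any semimartingale machinery.
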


As a consequence, (NA1$_s$) implies the semimartingale property for locally bounded processes.

\begin{cor}\label{cor:NA1 implies semimartingale}
   Let $S$ be a $d$-dimensional adapted process, a.s. right-continuous. If every component $S^i$ of $S=(S^1,\dots, S^d)$ is locally bounded from below and if $S$ satisfies (NA1$_s$), then $S$ is a semimartingale that satisfies (NA1), and any supermartingale density for $\W_{1,s}$ is also a supermartingale density for $\W_{1}$.
\end{cor}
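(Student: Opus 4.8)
The plan is to obtain everything from Theorem~\ref{thm: ex supermartingale density}. Since $S$ satisfies (NA1$_s$), that theorem (applied to the a.s.\ right-continuous adapted process $S$) provides a supermartingale density $Z$ for $\W_{1,s}$: an a.s.\ c\`adl\`ag nonnegative supermartingale with $Z_\infty>0$ a.s.\ such that $(1+(H\cdot S))Z$ is a supermartingale for every $1$-admissible simple strategy $H$. I would first record two elementary consequences of $Z_\infty>0$: by the minimum principle for nonnegative supermartingales, $Z>0$ and $Z_->0$ on $[0,\infty)$ a.s.; and, since an a.s.\ c\`adl\`ag strictly positive path with strictly positive left limits and a strictly positive limit at infinity is bounded away from $0$, the variable $\inf_{t\ge0}Z_t$ is a.s.\ strictly positive. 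Normalise so that $E[Z_0]=1$.

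To prove the semimartingale property, fix a component $S^i$ and a localising sequence $\tau_n\uparrow\infty$ of stopping times with $S^i_{\cdot\wedge\tau_n}\ge -c_n$ (so in particular $S^i_0\ge -c_1$ is bounded from below). For a deterministic time $N$ and a small enough constant $\delta_n>0$, the buy-and-hold strategy $H=\tfrac{\delta_n}{1+|S^i_0|}\,e_i\,1_{(0,\tau_n\wedge N]}$ has a nonnegative wealth process and is therefore $1$-admissible, so by Definition~\ref{def:supermartingale density} the process $(1+(H\cdot S))Z$ is a supermartingale, hence a semimartingale. Subtracting the semimartingale $Z$ and multiplying by the $\F_0$-measurable, a.s.\ finite and positive factor $(1+|S^i_0|)/\delta_n$ shows that $Z\,(S^i_{\cdot\wedge\tau_n\wedge N}-S^i_0)$ is a semimartingale; adding $S^i_0 Z$ (which is $S^i_0 Z_0$ plus the stochastic integral of the locally bounded predictable process $S^i_0$ against $Z$) shows that $Z\,S^i_{\cdot\wedge\tau_n\wedge N}$ is a semimartingale. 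Because $Z,Z_->0$ a.s., It\^o's formula gives that $1/Z$ is a semimartingale with $1/Z_-$ locally bounded, so integration by parts makes $S^i_{\cdot\wedge\tau_n\wedge N}=(Z\,S^i_{\cdot\wedge\tau_n\wedge N})\cdot(1/Z)$ a semimartingale. Letting $N\uparrow\infty$ (a process whose stopping at every deterministic time is a semimartingale is itself a semimartingale) and then $n\uparrow\infty$ (the semimartingale property is local, which remains valid on our non-complete filtration, cf.\ \cite{Jacod2003}), $S^i$, and hence $S$, is a semimartingale.

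It remains to show that the \emph{same} $Z$ is a supermartingale density for $\W_1$; given this, $S$ satisfies (NA1) by Theorem~\ref{thm: ex supermartingale density}, and also directly, since for $X=1+(H\cdot S)\in\W_1$ one gets (again by approximation) $E[X_tZ_t]\le E[Z_0]=1$, whence $P(X_t\ge M)\le P(\inf_s Z_s<\delta)+(M\delta)^{-1}\to0$ uniformly in $t$ and $X$ as first $\delta\to0$ and then $M\to\infty$, so that $\K_1$ is bounded in probability. To see that $XZ$ is a supermartingale for every $X=1+(H\cdot S)\in\W_1$, I would approximate the $1$-admissible integrand $H$ by simple integrands (along the lines of the approximation used to prove the Fundamental Theorem of Asset Pricing, \cite{Delbaen1994}), rescale the resulting $(1+1/n)$-admissible simple wealth processes to members $X^n$ of $\W_{1,s}$ with $X^n_t\to X_t$ in probability for each $t$, and then pass to the limit in the supermartingale inequality for $X^nZ$ using Fatou's lemma on the future side and a Koml\'os/uniform-integrability argument on the conditioning side, the terminal values being controlled by $E[(X^nZ)_\infty]\le E[(X^nZ)_0]=1$.

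I expect this last step to be the main obstacle: approximating a general $1$-admissible strategy by simple ones without losing control of the admissibility bound, and then exchanging limit and conditional expectation, is precisely the technical core of the classical Fundamental Theorem of Asset Pricing and is exactly where the proof of Theorem~\ref{thm: ex supermartingale density} does its work. By contrast the semimartingale property is comparatively cheap here, because $Z$ is a genuine supermartingale, so that products such as $Z\,S^i_{\cdot\wedge\tau_n\wedge N}$ are automatically semimartingales and one only needs to divide by $Z$.
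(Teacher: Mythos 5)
Your semimartingale argument is correct and is essentially the route the paper takes via Corollary~\ref{cor:abstract semimartingale}: obtain $Z$ from Theorem~\ref{thm: ex supermartingale density}, observe that $Z\cdot(\text{stopped, shifted } S^i)$ is a supermartingale, and divide by the strictly positive semimartingale $Z$ using It\^o/integration by parts. (Your use of the $\F_0$-measurable coefficient $\delta_n/(1+|S^i_0|)$ is if anything slightly more careful than the paper's deterministic $\alpha_n$, since $S^i_0$ need not be bounded from above, and stopping at deterministic $N$ before letting $N\uparrow\infty$ takes care of the requirement $T_n<\infty$ in the definition of a simple strategy.)

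The genuine gap is the last part, and you correctly identify it yourself: the claim that a supermartingale density for $\W_{1,s}$ is automatically a supermartingale density for $\W_1$. The approximation programme you outline --- approximate an arbitrary $1$-admissible $H$ by simple strategies, rescale to restore admissibility, and pass the supermartingale inequality to the limit --- is precisely the nontrivial step, and it is not simply a Fatou/Koml\'os argument. The problem is the admissibility control: simple integrals $H^n\cdot S$ approximating $H\cdot S$ in ucp need not stay above $-1-\varepsilon_n$ uniformly, and the standard remedy (stopping $H^n$ when $H^n\cdot S$ approaches $-1$) destroys the convergence $(H^n\cdot S)_t\to(H\cdot S)_t$ in a way that must itself be argued. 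In the paper this transfer is not re-proved; it is taken from Kardaras and Platen \cite{Kardaras2011a}, Section 2.2 (with a remark that it alternatively follows from the F\"ollmer-measure considerations of Section~\ref{sec: dominating measures}). As written, your proof replaces that citation by a sketch that you yourself flag as incomplete, so this step should either be filled in with an actual approximation lemma in the spirit of \cite{Delbaen1994} or \cite{Kardaras2011a}, or replaced by the reference. Once the transfer is available, your deduction of (NA1) (either directly via $E[X_\infty Z_\infty]\le 1$ and $Z_\infty>0$ a.s., or by invoking Theorem~\ref{thm: ex supermartingale density} in the semimartingale case) is fine.
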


%

Given a supermartingale density $Z$ for $S$, we then apply Yoeurp's \cite{Yoeurp1985} results on F\"ollmer's measure \cite{Follmer1972}, to construct a dominating measure $Q \gg P$ associated to $Z$. We define $\gamma$ to be a right-continuous version of the density process $\gamma_t = dP/dQ|_{\F_t}$, and $T$ to be the first time that $\gamma$ hits zero, $T = \inf \{t \ge 0: \gamma_t = 0\}$. Set
\begin{align*}
   S^{T-}_t = S_t 1_{\{t<T\}} + S_{T-} 1_{\{t \ge T\}} = S_t 1_{\{t<T\}} + \lim_{s \rightarrow T-} S_s 1_{\{t \ge T\}}.
\end{align*}
Note that $S$ and $S^{T-}$ are $P$-indistinguishable.

In the predictable case we then obtain the following result.

\begin{thm}\label{thm:predictable supermartingale density}
    Let $S$ be a predictable semimartingale. If $Z$ is a supermartingale density for $\W_1$, then $Z$ determines a probability measure $Q \gg P$ such that $S^{T-}$ is a $Q$-local martingale. Conversely, if $Q \gg P$ is a dominating local martingale measure for $S^{T-}$, then $\W_1$ admits a supermartingale density.
\end{thm}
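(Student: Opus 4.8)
The plan is to prove the two implications separately, starting with the easier converse. If $Q \gg P$ is a dominating local martingale measure for $S^{T-}$, then one wants to produce a supermartingale density for $\W_1$ working under $P$. The natural candidate is (a right-continuous version of) the $P$-martingale $Z_t = E_P[dP/dQ \,|\, \F_t]^{-1}$-type object; more precisely, since $Q \gg P$, the density process of $P$ with respect to $Q$, call it $\gamma_t = dP/dQ|_{\F_t}$, is a nonnegative $Q$-martingale, and one checks (this is essentially the content of F\"ollmer's and Yoeurp's results, which we are granted) that its ``reciprocal'' in the appropriate sense is a nonnegative $P$-supermartingale $Z$ with $Z_\infty > 0$ $P$-a.s.\ such that $(1 + (H\cdot S))Z$ is a $P$-supermartingale for every $1$-admissible $H$. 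The key point here is that a $Q$-local martingale of the form $1 + (H\cdot S^{T-})$ that is bounded below by $0$ under $Q$ is a $Q$-supermartingale, and multiplying by the density relating $Q$ and $P$ and using the optional stopping / change-of-measure bookkeeping yields the $P$-supermartingale property. Since $S$ and $S^{T-}$ are $P$-indistinguishable, the stochastic integrals agree $P$-a.s., so $Z$ is a supermartingale density for $\W_1$ as required. By Theorem \ref{thm: ex supermartingale density} this is in fact equivalent to (NA1) for $\W_1$, but we do not even need that for this direction.

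For the forward implication, assume $Z$ is a supermartingale density for $\W_1$. The plan is: (i) invoke Yoeurp's construction of F\"ollmer's measure associated to the nonnegative supermartingale $Z$ with $Z_0 \le 1$ (we may normalize $Z_0 = 1$, using that $1 \in \W_1$ so $Z = 1\cdot Z$ is a supermartingale with $Z_0 \le 1$), obtaining a probability measure $Q$ on $(\Omega, \F_\infty)$ with $Q \gg P$ and such that, writing $\gamma$ for the right-continuous density process $dP/dQ|_{\F_t}$ and $T = \inf\{t : \gamma_t = 0\}$, one has the fundamental relation between $Q$-expectations and $P$-expectations against $Z$; (ii) translate the statement ``$(1+(H\cdot S))Z$ is a $P$-supermartingale for all $1$-admissible $H$'' into a statement about $1+(H\cdot S^{T-})$ under $Q$ via F\"ollmer's change of measure, obtaining that $1 + (H\cdot S^{T-})$ is a $Q$-supermartingale for every such $H$; (iii) specialize to the strategies $H = \pm e_i 1_{(s,t]}$ (and their stopped versions), which are $1$-admissible when $S^i$ is suitably controlled — and here predictability of $S$ is what lets us reduce to, or localize by, bounded predictable integrands so that $\pm e_i \cdot (S^{T-}_t - S^{T-}_s)$-type increments have controlled $Q$-expectation. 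Getting both $H = +e_i 1_{(s,t]}$ and $H = -e_i 1_{(s,t]}$ to be admissible forces the supermartingale inequality in both directions, i.e.\ $E_Q[S^{T-,i}_t - S^{T-,i}_s \,|\, \F_s] = 0$, yielding the $Q$-(local) martingale property of $S^{T-}$.

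The main obstacle is step (iii): a priori neither $H = +e_i$ nor $H = -e_i$ is $1$-admissible, because $(H\cdot S)$ need not be bounded below. This is exactly where the hypothesis that $S$ is \emph{predictable} is essential. I expect the argument to run as follows: by a localization/stopping argument along a sequence of predictable stopping times reducing $S$ (and $Z$), one reduces to the case where the relevant increments are bounded, so that $1 + \varepsilon(\pm e_i 1_{(s,t]}\cdot S)$ is $1$-admissible for small $\varepsilon$; predictability ensures these stopping times can be taken predictable and announced, so that the stopped processes $S^{T-}$ inherit the relation cleanly and one does not pick up boundary terms at $T$ (recall $S^{T-}$ is stopped \emph{just before} $T$, which is the natural thing under $Q$ since $\gamma_{T-}$ may be positive while $\gamma_T = 0$). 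One then passes to the limit, using the $Q$-supermartingale property already established to control the limit and upgrade ``$Q$-supermartingale in both directions after localization'' to ``$Q$-local martingale''. Finally one records that $S^{T-}$, being a $Q$-local martingale, is the desired conclusion, and notes for completeness that combining both implications with Theorem \ref{thm: ex supermartingale density} shows this is all equivalent to (NA1).
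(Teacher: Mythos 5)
Your converse direction matches the paper: $P$-$1$-admissible strategies remain $Q$-$1$-admissible for $S^{T-}$ (since $Q(\tau < T) = E_P(Z_\tau 1_{\{\tau < \infty\}}) = 0$ whenever $P(\tau < \infty) = 0$), and then the computation in \eqref{eq:supermartingale density derivation} shows $Z_t = 1_{\{t < T\}}/\gamma_t$ is a supermartingale density for $\W_1$.

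Your forward direction has a genuine gap at step (ii), and it also misidentifies where predictability is used. The claim that the $P$-supermartingale property of $(1+(H\cdot S))Z$ ``translates'' under F\"ollmer's change of measure into the $Q$-supermartingale property of $1+(H\cdot S^{T-})$ is false for a general supermartingale density $Z$; the L\'evy-process example in Section~\ref{sec: dominating measures} is exactly such a counterexample ($Z = e^{-a\cdot}$ is a supermartingale density for $L$, yet $L^{T-}$ is not a $Q$-local martingale). The obstruction is visible in Yoeurp's formula (Corollary \ref{cor:yoeurps formula 2}): writing the Doob--Meyer decomposition $Z = Z_0 + M - A$ with $A$ increasing, one gets
\begin{align*}
   E_Q\big(1+(H\cdot S^{T-})_{\tau_n}\big) = E_P\Big( (1+(H\cdot S)_{\tau_n}) Z_{\tau_n} + \int_0^{\tau_n} (1+(H\cdot S)_{s-}) \, dA_s\Big),
\end{align*}
and the second (nonnegative) term is an extra contribution that the $P$-supermartingale property of $(1+(H\cdot S))Z$ alone does not control. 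Predictability is not needed merely to localize to bounded integrands, as you suggest in step (iii); rather, it forces the structure condition (Lemma \ref{lem: structure condition for predictable S}), which yields continuity of $S$, the explicit form $dZ_t = Z_{t-}(-\lambda_t dM_t + dN_t - dB_t)$ with $\lambda$ satisfying \eqref{eq:structure condition}, and -- via the integration-by-parts computation \eqref{eq:second ibp supermartingale density} -- the crucial fact that the \emph{whole} expression $(1+(H\cdot S))Z + \int_0^\cdot (1+(H\cdot S)_{s-})\,dA_s$ is a nonnegative $P$-local martingale. That cancellation, not a change-of-measure translation of the bare supermartingale property, is what delivers $E_Q(1+(H\cdot S^{T-})_{\tau_n}) \le 1$ and ultimately the $Q$-local martingale property of $S^{T-}$.
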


Theorem \ref{thm:predictable supermartingale density} is false if $S$ is not predictable, as we will demonstrate on a simple example. But in the non-predictable case we are able to exhibit a subclass of supermartingale densities that \emph{do} give rise to dominating local martingale measures. Conversely every dominating local martingale measure corresponds to a supermartingale density, even for processes that are not predictable. Therefore the following theorem, the main result of this paper, is valid for all locally bounded processes that are adapted and a.s. right-continuous. In the non-predictable case we build on results of \cite{Takaoka2012} that are only formulated for processes on finite time intervals. So in the theorem we let $T_\infty = \infty$ if $S$ is predictable, and $T_\infty \in (0,\infty)$ otherwise.

\begin{thm}\label{thm:main result}
   Let $(S_t)_{t \in [0,T_\infty]}$  be a locally bounded, adapted process, a.s. right-continuous. Then $S$ satisfies (NA1$_s$) if and only if there exists a dominating $Q\gg P$, such that $S^{T-}$ is a $Q$-local martingale.
\end{thm}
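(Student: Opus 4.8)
The plan is to combine the two earlier structural results with the F\"ollmer measure construction, and then treat the predictable and non-predictable cases separately.

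\medskip

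\noindent\textbf{Strategy.} First I would reduce to working with a supermartingale density. By Theorem~\ref{thm: ex supermartingale density} (in the version for adapted, a.s.\ right-continuous processes) together with Corollary~\ref{cor:NA1 implies semimartingale}, the condition (NA1$_s$) is equivalent to the existence of a supermartingale density $Z$ for $\W_{1,s}$, and moreover under local boundedness $S$ is automatically a semimartingale and $Z$ is then a supermartingale density for $\W_1$ as well. So the real content is the equivalence ``existence of a supermartingale density $\iff$ existence of a dominating $Q\gg P$ making $S^{T-}$ a $Q$-local martingale.''

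\medskip

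\noindent\textbf{From $Z$ to $Q$.} Given a supermartingale density $Z$, I would apply Yoeurp's construction of F\"ollmer's measure associated to the nonnegative supermartingale $Z$ on the (right-continuous, non-complete) filtered space: this yields a measure $Q$ on $\F_\infty$ with $Q\gg P$, with $dP/dQ|_{\F_t} = \gamma_t$ a right-continuous version, and with $T=\inf\{t: \gamma_t = 0\}$. The fact that $Z_\infty > 0$ $P$-a.s.\ is what ensures $Q\gg P$ (rather than merely $Q$ being a subprobability absolutely continuous on a strict subset). In the predictable case I would then invoke Theorem~\ref{thm:predictable supermartingale density} directly: $S^{T-}$ is a $Q$-local martingale, and the converse direction there already gives a supermartingale density from any dominating local martingale measure. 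In the non-predictable case, Theorem~\ref{thm:predictable supermartingale density} fails, so instead I would appeal to the discussion promised in the excerpt: there is a distinguished subclass of supermartingale densities that does produce dominating local martingale measures. The point is to show this subclass is nonempty whenever \emph{any} supermartingale density exists. Here I would use the Takaoka-type result (\cite{Takaoka2012}): on the finite horizon $[0,T_\infty]$, (NA1) is equivalent to the existence of a supermartingale density of the special form that is itself (a constant times) a local martingale, i.e.\ a strict martingale density / the density of an equivalent-after-change-of-num\'eraire measure; applying the F\"ollmer construction to \emph{that} $Z$ gives the desired $Q$.

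\medskip

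\noindent\textbf{From $Q$ back to $Z$.} For the converse, given a dominating $Q\gg P$ under which $S^{T-}$ is a $Q$-local martingale, I would set $Z_t$ to be a right-continuous version of the martingale $E_Q(dP/dQ \mid \F_t)$ — equivalently, $Z$ is the density process $dP/dQ$. Since $Q\gg P$, the process $Z$ is a nonnegative $P$-supermartingale (a $P$-martingale in fact, but non-completeness forces a little care: $Z$ is the $P$-supermartingale obtained as the Radon-Nikodym derivative on each $\F_t$) with $Z_\infty > 0$ $P$-a.s.\ (this uses $Q\gg P$, which forces $\{Z_\infty = 0\}$ to be $P$-null). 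Then for $Y = 1 + H\cdot S \in \W_1$ with $H$ 1-admissible, $YZ$ is a nonnegative $P$-supermartingale because under $Q$, $Y^{T-} = 1 + H \cdot S^{T-}$ is a nonnegative $Q$-local martingale hence a $Q$-supermartingale, and multiplying by $dP/dQ$ and translating back to $P$ preserves the supermartingale property; I must check the integrand $H$ remains admissible for $S^{T-}$ and that the stochastic integrals agree up to $T$, which is where the $S^{T-}$ truncation does its job. Thus $Z$ is a supermartingale density, and by Theorem~\ref{thm: ex supermartingale density}, (NA1$_s$) holds.

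\medskip

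\noindent\textbf{Main obstacle.} I expect the hard part to be the non-predictable direction: producing a supermartingale density of the \emph{right kind} (one compatible with the F\"ollmer construction so that the resulting $Q$ is genuinely a probability measure, not a strict subprobability, and $S^{T-}$ is a local martingale and not merely a supermartingale) from the mere existence of \emph{some} supermartingale density. This is precisely where the finite-horizon hypothesis $T_\infty < \infty$ and the results of \cite{Takaoka2012} enter — the num\'eraire-change argument there converts a supermartingale density into a local-martingale density, and the subtlety is transferring this through the F\"ollmer measure while controlling the behaviour at the time $T$ where $\gamma$ hits zero (equivalently where $Z$ would need to ``blow up''), which is exactly why the statement is phrased in terms of $S^{T-}$ rather than $S$ itself. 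A secondary technical point is handling all measurability and versioning issues without a $P$-complete filtration, but the excerpt has already flagged that this is dealt with in Section~\ref{sec: supermartingale densities} and Appendix~\ref{app:complete filtration}.
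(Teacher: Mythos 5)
Your overall strategy matches the paper's: reduce to supermartingale densities via Theorem~\ref{thm: ex supermartingale density} and Corollary~\ref{cor:NA1 implies semimartingale}, then pass to the F\"ollmer/Kunita-Yoeurp measure, invoking Theorem~\ref{thm:predictable supermartingale density} in the predictable case and Takaoka's num\'eraire-change result to produce a \emph{local martingale} density in the general finite-horizon case (so that $T$ is foretellable under $Q$). That is exactly what the paper does.

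However, the converse direction as you have written it contains a genuine error: you define $Z_t$ to be the density process $\gamma_t = dP/dQ\rvert_{\F_t} = E_Q(dP/dQ \mid \F_t)$, and then assert that this $Z$ is a nonnegative $P$-supermartingale (even ``a $P$-martingale in fact''). Neither is true. The process $\gamma$ is a $Q$-martingale; under $P$ it is, by conditional Jensen, a $P$-\emph{sub}martingale (since for $A\in\F_s$ and $s\le t$ one has $E_P(\gamma_t 1_A)=E_Q(\gamma_t^2 1_A)\ge E_Q(\gamma_s^2 1_A)=E_P(\gamma_s 1_A)$), and it may fail to be $P$-integrable. The correct object — the one used in the paper's display \eqref{eq:supermartingale density derivation} and in the converse part of the proof of Theorem~\ref{thm:predictable supermartingale density} — is the Kunita-Yoeurp density $Z_t = 1_{\{t<T\}}/\gamma_t$, the truncated \emph{reciprocal} of $\gamma$. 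With this $Z$, the computation $E_P(Y_t Z_t 1_A)=E_Q(Y^{T-}_t 1_{\{t<T\}} 1_A)\le E_Q(Y^{T-}_s 1_{\{s<T\}} 1_A)=E_P(Y_s Z_s 1_A)$ (first using nonnegativity of $Y^{T-}$ to enlarge $\{t<T\}$ to $\{s<T\}$, then the $Q$-supermartingale property of $Y^{T-}$) closes the argument. With your $\gamma$ in place of $Z$, the would-be inequality requires that $Y\gamma^2$ be a $Q$-supermartingale, which does not follow — $Y^{T-}$ is a $Q$-supermartingale while $\gamma^2$ is a $Q$-submartingale. Your verbal cue ``multiplying by $dP/dQ$ and translating back'' points in the wrong direction: it is division by $\gamma$ on $\{T>t\}$ that effects the change of measure from $Q$ to $P$ here. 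This is a local slip rather than a strategic one, but as written the converse step does not go through.
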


This work is motivated by insights from the theory of filtrations enlargements. A filtration $(\G_t)$ is called \emph{filtration enlargement} of $(\F_t)$ if $\G_t \supseteq \F_t$ for all $t \ge 0$. A basic question is then under which conditions all members of a given family of $(\F_t)$-semimartingales are $(\G_t)$-semimartingales. We say that \emph{Hypoth\`{e}se $(H')$} is satisfied if \emph{all} $(\F_t)$-semimartingales are $(\G_t)$-semimartingales. Given a $(\F_t)$-semimartingale that satisfies (NFLVR), i.e. for which there exists an equivalent local martingale measure, one might also ask under which conditions it still satisfies (NFLVR) under~$(\G_t)$. It is well known, and we llustrate this in an example below, that the (NFLVR) condition is usually violated after filtration enlargements.

However it turns out that (NA1) is relatively stable under filtration enlargements. If $(\G_t)$ is an initial enlargement of $(\F_t)$, i.e. $\G_t = \F_t \vee \sigma(L)$ for some random variable $L$, then Jacod's criterion \cite{Jacod1985} is a celebrated condition that guarantees Hypoth\'ese $(H')$ to hold. We show that Jacod's criterion implies in fact the existence of a \emph{universal supermartingale density}. A strictly positive process $Z$ is called universal supermartingale density if $Z M$ is a $(\G_t)$-supermartingale for every nonnegative $(\F_t)$-supermartingale $M$. This is of course a much stronger than Hypoth\`{e}se $(H')$, and in particular it implies that every process satisfying (NA1) under $(\F_t)$ also satisfies (NA1) under $(\G_t)$.

We also show that if $(\G_t)$ is a general (not necessarily initial) filtration enlargement of $(\F_t)$, and if there exists a universal supermartingale density for $(\G_t)$, then a generalized version of Jacod's criterion is necessarily satisfied.

\subsection{Structure of the paper}

Section \ref{sec: motivation} describes our motivation from filtration enlargements in more detail. In Section \ref{sec: motivation} we also argue that dominating local martingale measures should correspond to supermartingale densities. In Section \ref{sec: supermartingale densities} we prove that the existence of supermartingale densities is equivalent to (NA1$_s$). In Section \ref{sec: dominating measures} we prove if $S$ is predictable, then $Z$ is a supermartingale density for $S$ if and only if $S^{T-}$ is a local martingale under the F\"ollmer measure $P^Z$. We also prove our main result, Theorem \ref{thm:main result}, for general locally bounded processes (not necessarily predictable). In Section \ref{sec: jacod criterion} we return to filtration enlargements and examine how Jacod's criterion relates to our results.

\subsection{Relevant literature}

Supermartingale densities were first considered by Kramkov and Schachermayer \cite{Kramkov1999} and Becherer \cite{Becherer2001}.

The semimartingale case of Theorem \ref{thm: ex supermartingale density} was shown by Karatzas and Kardaras \cite{Karatzas2007}. Their proof extensively uses the semimartingale characteristics of $S$, and can therefore not be applied to general processes satisfying (NA1$_s$). Note that Corollary \ref{cor:NA1 implies semimartingale} states that any locally bounded process satisfying (NA1$_s$) is a semimartingale. But for unbounded processes this is no longer true, as we shall demonstrate in a simple counterexample below. A more general result than Theorem \ref{thm: ex supermartingale density} is shown in Rokhlin \cite{Rokhlin2010}, using arguments that are related to our proof. In fact our arguments are powerful enough to imply the results of \cite{Rokhlin2010}. We were not aware of either of these works before completing our proof, and decided to keep it in the paper because we believe that it gives a nice application of \emph{convex compactness}, as introduced by Zitkovic \cite{Zitkovic2010}. Oversimplifying things a bit, one can understand convex compactness as an elegant way of formalizing convergence and compactness results that are usually shown by ad-hoc considerations based on results like Lemma A1.1 of \cite{Delbaen1994}. We also believe that our techniques may be interesting in more complicated contexts, say under transaction costs, where arbitrage considerations no longer imply the the semimartingale property of the price process. 

It is well known that a locally bounded process satisfying (NA1$_s$) must be a semimartingale, see Ankirchner's thesis \cite{Ankirchner2005}, Theorem 7.4.3, and also Kardaras and Platen \cite{Kardaras2011a}. See also \cite{Delbaen1994} for a first result in this direction. This part of Corollary \ref{cor:NA1 implies semimartingale} is an immediate consequence of Theorem \ref{thm: ex supermartingale density}. We rely on \cite{Kardaras2011a} to obtain that (NA1$_s$) implies (NA1) for locally bounded processes, and that in that case supermartingale densities for $\W_{1,s}$ are supermartingale densities for $\W_1$.

Recently there has been an increased interest in F\"ollmer's measure, motivated by problems from mathematical finance. F\"ollmer's measure appears naturally in the construction and study of \emph{strict local martingales}, i.e. local martingales that are not martingales. These are used to model bubbles in financial markets, see Jarrow, Protter and Shimbo \cite{Jarrow2010}. A pioneering work on the relation between F\"ollmer's measure and strict local martingales is Delbaen and Schachermayer \cite{Delbaen1995}. Other references are Pal and Protter \cite{Pal2010} and Kardaras, Kreher and Nikeghbali \cite{Kardaras2011}. The work most related to ours is Ruf \cite{Ruf2010}, where it is shown that in a diffusion setting, (NA1) implies the existence of a dominating local martingale measure. All these works have in common that they study F\"ollmer measures of strictly positive local martingales. Carr, Fisher and Ruf \cite{Carr2011} study the F\"ollmer measure of a local martingale which is not strictly positive.

To the best of our knowledge, the current work is the first time that the F\"ollmer measure of an actual supermartingale (i.e. a supermartingale which is not a local martingale) is used as a local martingale measure. In F\"ollmer and Gundel \cite{Follmer2006}, supermartingales $Z$ are associated to ``extended martingale measures'' $P^Z$. But they define $P^Z$ to be an extended martingale measure if and only if $Z$ is a supermartingale density. This does not obiously imply that $S^{T-}$ or $S$ is a local martingale under $P^Z$ (and in general this is not true). Here we show that if $S$ is predictable, then any supermartingale density $Z$ corresponds to a dominating local martingale measure $P^Z$ - meaning that $S^{T-}$ is a local martingale under $P^Z$. For non-predictable $S$ we give a counterexample. In that case we identify a subclass of supermartingale densities that correspond to local martingale measures.

Another related work is Kardaras \cite{Kardaras2010}, where it is shown that (NA1) is equivalent to the existence of a finitely additive equivalent local martingale measure. Here we construct countably additive measures, that are however not equivalent but only dominating.

The main motivation for this work comes from the theory of filtrations enlargements, see for example Amendinger, Imkeller and Schweizer \cite{Amendinger1998}, Ankirchner's thesis \cite{Ankirchner2005}, and Ankirchner, Dereich and Imkeller \cite{Ankirchner2006}. In these works it is shown that if $M$ is a continuous local martingale in a given filtration $(\F_t)$, then under an enlarged filtration $(\G_t)$, assuming suitable conditions, $M$ is of the form $M = \tilde{M} + \int_0^\cdot \alpha_s d \langle \tilde{M} \rangle_s$, where $\tilde{M}$ is a $(\G_t)$-local martingale. It is then a natural question to ask whether there exists an equivalent measure $Q$ that ``eliminates'' the drift, i.e. under which $M$ is a $(\G_t)$-local martingale. In general the answer to this question is negative. However Ankirchner \cite{Ankirchner2005}, Theorem 9.2.7, observed that if there exists a well-posed utility maximization problem in the large filtration, then the \emph{information drift} $\alpha$ must be locally square integrable with respect to $\tilde{M}$. Here we show that for continuous processes, the square integrability of the information drift is \emph{equivalent} to the well-posedness of a utility maximization problem in the large filtration, we relate these conditions to (NA1), and we show that this allows to construct dominating local martingale measures. We also give the corresponding results for discontinuous processes.

\section{Motivation}
\label{sec: motivation}

In this section we show that under filtration enlargements, generally there exists no longer an equivalent local martingale measure. 
Then we recall that as long as Jacod's condition holds, there still is a dominating local martingale measure. Finally we argue that under Jacod's condition, (NA1) is often satisfied in the large filtration. We hope that this convinces the reader that (NA1) resp. (NA1$_s$) should be in some relation to the existence of dominating local martingale measures. Assuming that a dominating local martingale measure exists, we examine its Kunita-Yoeurp decomposition under $P$, and we see that it corresponds to a supermartingale density.

\subsection*{Equivalent local martingale measures and filtration enlargements}

Consider a filtered probability space $(\Omega, \F, (\F_t)_{t\ge 0}, P)$ with $P(A) \in \{0,1\}$ for every $A \in \F_0$. Define $\F_\infty = \vee_{t\ge 0} \F_t$. Let $S$ be a one-dimensional semimartingale that describes a complete market (i.e. for every $F \in L^\infty(\F_\infty)$ there exists a predictable process $H$, integrable with respect to $S$, such that $F = F_0 + \int_0^\infty H_s dS_s$ for some constant $F_0 \in \R$). Let $L$ be a random variable that is $\F_\infty$-measurable. Assume that $L$ is not $P$-a.s. constant. Define the initially enlarged filtration
\begin{align*}
   (\G_t = \F_t \vee \sigma (L): t \ge 0).
\end{align*}
This is a toy model for insider trading. At time 0, the insider has the additional knowledge of the value of $L$. Since $L$ is not constant, there exists $A \in \sigma(L)$ such that $P(A) \in (0,1)$. Assume $Q$ is an equivalent $(\G_t)$-local martingale measure for $S$. Consider the $(Q,(\F_t))$-martingale $N_t = E_Q(1_A|\F_t)$, $t \ge 0$. Since the market is complete, $1_A$ can be replicated. That is, there exists a $(\F_t)$-predictable strategy $H$ such that $N_\cdot = Q(A) + \int_0^\cdot H_s dS_s$. But then $\int_0^\cdot H_s dS_s$ is a bounded $(Q, (\G_t))$-local martingale. Hence it is a martingale, and since $A^c \in \G_0$, we obtain
\begin{align*}
   0 = E_Q(1_{A^c} 1_A) = E_Q\left(1_{A^c} \left( Q(A) + \int_0^\infty H_s dS_s\right)\right) = Q(A^c) Q(A) > 0,
\end{align*}
which is absurd. The last step follows because $Q$ was assumed to be equivalent to $P$.

So already in the simplest insider trading models there may not exist an equivalent local martingale measure. If $S$ is locally bounded, then by the Fundamental Theorem of Asset Pricing at least one of the conditions (NA) or (NA1) has to be violated.

\subsection*{Jacod's criterion and dominating local martingale measures}

Let $(\G_t)$ be a filtration enlargement of $(\F_t)$, i.e. $\F_t \subseteq \G_t$ for every $t \ge 0$. Let $\mathcal{S}$ be a family of $(\F_t)$-semimartingales. One of the typical questions in filtration enlargements is under which conditions all $S \in \mathcal{S}$ are $(\G_t)$-semimartingales. \emph{Hypoth\`{e}se $(H')$} is said to be satisfied if \emph{all} $(\F_t)$-semimartingales are $(\G_t)$-semimartingales. 

Jacod's criterion \cite{Jacod1985} is a famous condition that implies Hypoth\`{e}se $(H')$. Here we give an equivalent formulation, first found by F\"ollmer and Imkeller \cite{Follmer1993} and later generalized and carefully studied by Ankirchner, Dereich and Imkeller \cite{Ankirchner2007}. Let $L$ be a random variable and consider the initial enlargement $\G_t = \F_t \vee \sigma(L)$. Define the product space
\begin{align*}
   \overline{\Omega} = \Omega \times \Omega, \qquad \overline{\G} = \F_\infty \otimes \sigma(L), \qquad \overline{\G}_t = \F_t \otimes \sigma(L).
\end{align*}
We define two measures on $\overline{\Omega}$. The decoupling measure $\overline{Q} = P|_{\F_\infty} \otimes P|_{\sigma(L)}$, and $\overline{P} = P \circ \psi^{-1}$, where $\psi: \Omega \rightarrow \overline{\Omega}$, $\psi(\omega) = (\omega, \omega)$.  We then have the following result, which in this setting is just a reformulation of Jacod's criterion.

\begin{thm*}[Theorem 1 in \cite{Ankirchner2007}]
   If $\overline{P} \ll \overline{Q}$, then Hypoth\`ese $(H')$ holds, i.e. any $(\F_t)$-semimartingale is a $(\G_t)$-semimartingale.
\end{thm*}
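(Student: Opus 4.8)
The plan is to lift the problem to the product space $\overline{\Omega}$, where the initial enlargement by $\sigma(L)$ becomes an \emph{independent} enlargement under the decoupling measure $\overline{Q}$, to use that independent enlargements preserve the semimartingale property, and then to transfer the conclusion back to $(\Omega,(\G_t),P)$ along the diagonal map $\psi$, exploiting $\overline{P}\ll\overline{Q}$. So let $S$ be an (a.s. c\`adl\`ag) $(\F_t)$-semimartingale on $\Omega$ and write $S=M+A$ with $M$ a $P$-local martingale and $A$ an $(\F_t)$-adapted process of finite variation on bounded intervals; after subtracting the constant-in-time process $M_0$ into $A$ we may assume $M_0$ is integrable. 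Lift these to $\overline{\Omega}$ by $\overline{M}(\omega_1,\omega_2):=M(\omega_1)$, $\overline{A}(\omega_1,\omega_2):=A(\omega_1)$, $\overline{S}:=\overline{M}+\overline{A}$; since $\overline{\G}_t=\F_t\otimes\sigma(L)$ contains $\F_t\otimes\{\emptyset,\Omega\}$, all three are $(\overline{\G}_t)$-adapted, and $\overline{A}$ is of finite variation. If $(\tau_n)$ is a localizing sequence with each $M^{\tau_n}$ a $P$-martingale, then $\overline{\tau}_n(\omega_1,\omega_2):=\tau_n(\omega_1)$ are $(\overline{\G}_t)$-stopping times increasing to $\infty$ $\overline{Q}$-a.s.; because $\overline{Q}=P|_{\F_\infty}\otimes P|_{\sigma(L)}$ is a product measure and $\overline{M}^{\overline{\tau}_n}_t$ depends only on $\omega_1$, one computes for $s\le t$ that $E_{\overline{Q}}(\overline{M}^{\overline{\tau}_n}_t\mid\overline{\G}_s)=E_P(M^{\tau_n}_t\mid\F_s)=M^{\tau_n}_s=\overline{M}^{\overline{\tau}_n}_s$. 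Hence $\overline{M}$ is a $(\overline{\G}_t,\overline{Q})$-local martingale, so $\overline{S}$ is a $(\overline{\G}_t,\overline{Q})$-semimartingale.

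Next, since $\overline{P}\ll\overline{Q}$ on $\overline{\G}$, I would invoke the classical fact that the semimartingale property is preserved under an absolutely continuous change of the underlying probability (see, e.g., \cite{Jacod2003}); one genuinely needs this form of the statement, because the density process $d\overline{P}/d\overline{Q}|_{\overline{\G}_t}$ need not be strictly positive, so a naive Girsanov computation is not available. This gives that $\overline{S}$ is a $(\overline{\G}_t,\overline{P})$-semimartingale. It remains to push this down to $\Omega$. One checks $\psi^{-1}(\overline{\G}_t)=\G_t$, $\overline{P}=P\circ\psi^{-1}$, and $\overline{S}\circ\psi=S$ because $\overline{S}(\omega,\omega)=M(\omega)+A(\omega)=S(\omega)$. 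By the Bichteler--Dellacherie characterization (applied on each bounded interval), a process is a semimartingale precisely when the elementary stochastic integrals against simple predictable integrands bounded by $1$ form a family bounded in probability. Given a simple $(\G_t)$-predictable integrand $H=\sum_i H_i 1_{(t_i,t_{i+1}]}$ on $\Omega$, each $\G_{t_i}$-measurable coefficient $H_i$ equals $\overline{H}_i\circ\psi$ for a bounded $\overline{\G}_{t_i}$-measurable $\overline{H}_i$ on $\overline{\Omega}$ by the functional representation lemma (using $\psi^{-1}(\overline{\G}_{t_i})=\G_{t_i}$ and truncating $\overline{H}_i$ to the range of $H_i$), so that $\overline{H}:=\sum_i\overline{H}_i 1_{(t_i,t_{i+1}]}$ is simple $(\overline{\G}_t)$-predictable with $(\overline{H}\cdot\overline{S})\circ\psi=H\cdot S$; moreover $P(|Y\circ\psi|\ge c)=\overline{P}(|Y|\ge c)$ for every $c>0$. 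Hence boundedness in $\overline{P}$-probability of the elementary integrals of $\overline{S}$ transfers to boundedness in $P$-probability of those of $S$, so $S$ is a $(\G_t,P)$-semimartingale. Since $S$ was an arbitrary $(\F_t)$-semimartingale, Hypoth\`ese $(H')$ follows.

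The only deep ingredient is the stability of semimartingales under $\overline{P}\ll\overline{Q}$ used in the second step; everything else is bookkeeping, and the independence of the two coordinates under $\overline{Q}$ makes the first step essentially immediate. The delicate part is the final descent to $\Omega$: the functional representation of simple $(\G_t)$-predictable integrands through $\psi$, the compatibility of elementary integration with this composition, and the fact that boundedness in probability is preserved under the pushforward by $\psi$. I expect this measure-theoretic transfer — rather than any analytic estimate — to be the main obstacle, and it is precisely the reason for setting things up on the product space with the diagonal embedding $\psi$.
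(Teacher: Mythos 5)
Your proposal fleshes out exactly the three-step sketch the paper gives right after the statement: lift to $\overline{\Omega}$ and use that under the decoupling measure $\overline{Q}$ the lifted $(\F_t)$-local martingale stays an $(\overline{\G}_t)$-local martingale, invoke stability of the semimartingale property under the absolutely continuous change $\overline{P}\ll\overline{Q}$, and transfer back along the diagonal map $\psi$ (which the paper leaves as ``it is possible to show''). Your treatment is correct and slightly more careful than the sketch — you rightly note that the density $d\overline{P}/d\overline{Q}$ need not be strictly positive so one needs the general stability theorem rather than a naive Girsanov computation, and you supply the Bichteler--Dellacherie argument for the descent via $\psi$ — but the strategy is the same.
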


In this formulation it is quite obvious why Jacod's criterion works. Under the measure $\overline{Q}$, the additional information from $L$ is independent of $\F_\infty$. Therefore any $(\F_t)$-martingale $M$ will stay a $(\overline{\G}_t)$-martingale under $\overline{Q}$ (if we embed $M$ from $\Omega$ to $\overline{\Omega}$ by setting $\overline{M}_t(\omega, \omega') = M_t(\omega)$). By assumption, $\overline{Q}$ \emph{dominates} $\overline{P}$. So an application of Girsanov's theorem implies that $\overline{M}$ is a $\overline{P}$-semimartingale. But it is possible to show that any $(\overline{P}, (\overline{\G}_t))$-semimartingale is a $(P, (\G_t))$-semimartingale, which completes the argument. The message is that Jacod's criterion implies the existence of a dominating measure under which any $(\F_t)$-martingale is a $(\G_t)$-martingale.

It is not hard to see that Jacod's criterion is always satisfied if $L$ takes its values in a countable set, regardless of the structure of $(\Omega, \F, (\F_t), P)$ and $S$. So if we recall our example of an initial filtration enlargement in a complete market from above, then we observe that Jacod's criterion may be satisfied even though there is no equivalent local martingale measure in the large filtration.

\subsection*{Utility maximization and filtration enlargements}

There are many articles devoted to calculating the additional utility of an insider. Assume $S$ is a semimartingale in the large filtration $(\G_t)$. Then we define the set of attainable terminal wealths $\K_1(\F_t)$ and $\K_1(\G_t)$ as in \eqref{eq:K1}, using $(\F_t)$-predictable and $(\G_t)$-predictable strategies respectively.

If $S$ describes a complete market under $(\F_t)$, and if $(\G_t)$ is an initial enlargement satisfying Jacod's criterion, then it is shown in Ankirchner's thesis (\cite{Ankirchner2005}, Theorem 12.6.1, see also \cite{Ankirchner2006}), that the maximal expected logarithmic utility under $(\G_t)$ is given by
\begin{align*}
   \sup_{X \in \K_1(\G_t)} E(\log(X)) = \sup_{X \in \K_1(\F_t)} E(\log(X)) + I(L, \F_\infty),
\end{align*}
where $I(L, \F_\infty)$ is the mutual information between $L$ and $\F_\infty$. This mutual information may be finite, and therefore the maximal expected utility under $(\G_t)$ may be finite. But we show in Proposition \ref{prop: l0-bded iff finite utility} below that finite utility and (NA1) are equivalent.

\begin{lem}\label{lem:utility concrete}
   $S$ satisfies (NA1) under $(\G_t)$ if and only if there exists an unbounded increasing function $U$ such that the maximal expected utility is finite, i.e. such that
   \begin{align*}
      \sup_{X \in \K_1(\G_t)} E(U(X)) < \infty.
   \end{align*}
\end{lem}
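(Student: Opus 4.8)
The plan is to prove Lemma \ref{lem:utility concrete} by showing the two implications separately, using the characterization of (NA1) as boundedness in probability of $\K_1$ (here $\K_1 = \K_1(\G_t)$).

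First I would treat the easy direction: suppose $S$ satisfies (NA1), so $\K_1$ is bounded in probability. I want to exhibit an unbounded increasing function $U$ with $\sup_{X \in \K_1} E(U(X)) < \infty$. The idea is that boundedness in probability means the tails $\sup_{X \in \K_1} P(X \ge M)$ decay to $0$ as $M \to \infty$, so one can choose a sequence $M_k \uparrow \infty$ with $\sup_{X \in \K_1} P(X \ge M_k) \le 2^{-k}$. Then define $U$ to be a piecewise-linear (or piecewise-constant) increasing function that grows slowly enough — e.g. $U$ increases by $1$ on each interval $[M_k, M_{k+1}]$, so $U(x) \le k+1$ for $x < M_{k+1}$ — and one estimates $E(U(X)) \le U(1) + \sum_k (U(M_{k+1}) - U(M_k)) P(X \ge M_k) \le C + \sum_k 2^{-k} < \infty$, uniformly in $X \in \K_1$. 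A small amount of care is needed to make $U$ genuinely unbounded and increasing and to handle the values of $X$ below $M_1$ (but $X \ge 0$ and $E$ of a bounded quantity is harmless). This gives the ``only if'' direction.

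For the converse, suppose there is an unbounded increasing $U$ with $c := \sup_{X \in \K_1} E(U(X)) < \infty$; I want to deduce that $\K_1$ is bounded in probability. Without loss of generality $U \ge 0$ (replace $U$ by $U - U(0)$, still increasing and unbounded, and the sup stays finite since $X \ge 0$). By Markov's inequality, for any $X \in \K_1$ and any level $M$, $P(X \ge M) = P(U(X) \ge U(M)) \le E(U(X))/U(M) \le c/U(M)$, provided $U(M) > 0$. Since $U$ is unbounded and increasing, $U(M) \to \infty$ as $M \to \infty$, hence $\sup_{X \in \K_1} P(X \ge M) \le c / U(M) \to 0$. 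As all wealth processes in $\W_1$ are nonnegative (a $1$-admissible strategy keeps wealth $\ge 0$ after adding the initial $1$... actually $\ge 1 - 1 = 0$), elements of $\K_1$ are nonnegative, so $P(|X| \ge M) = P(X \ge M)$ and we conclude $\K_1$ is bounded in probability, i.e. (NA1) holds.

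I do not expect a serious obstacle here; the statement is essentially a soft reformulation. The one point requiring mild attention is the construction of $U$ in the first direction — making sure it is a legitimate unbounded increasing function while still controlling $\sup_{X} E(U(X))$ — and making explicit (or citing) that $\K_1$ consists of nonnegative random variables so that two-sided boundedness in probability reduces to the one-sided tail bound. If one wants $U$ concave as well (natural for a ``utility function''), one can additionally arrange the slopes of the piecewise-linear $U$ to be decreasing, which is compatible with the slow-growth requirement; this is optional and not needed for the statement as phrased.
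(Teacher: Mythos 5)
Your proof is correct, and it takes a slightly different (and more elementary) route through the de la Vall\'ee-Poussin step than the paper does. The paper reduces the lemma to Proposition~\ref{prop: l0-bded iff finite utility}, whose hard direction writes $U(x) = \int_0^x g$ with a piecewise-constant, decreasing density $g$ on unit intervals and produces $g$ by a Ces\`aro-averaging argument; that extra machinery also delivers concavity and $U(0)=0$, which the paper's proposition asserts but the lemma itself does not need. You instead extract a subsequence $M_k \uparrow \infty$ with $\sup_{X\in\K_1} P(X\ge M_k)\le 2^{-k}$ and let $U$ increment by $1$ on each block $[M_k,M_{k+1}]$; the bound $E(U(X))\le U(M_1)+\sum_k 2^{-k}$ then follows at once, and the Ces\`aro step is avoided entirely. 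If you also wanted concavity you would only have to thin the $M_k$ so the gaps $M_{k+1}-M_k$ are nondecreasing, as you note. The ``if'' direction is the same Markov-inequality argument as in the paper. Two small points worth tightening: the constant in your bound should be $U(M_1)$ rather than $U(1)$ (handling the contribution of $\{X<M_1\}$, which you flagged), and $P(X\ge M)=P(U(X)\ge U(M))$ should really be $P(X\ge M)\le P(U(X)\ge U(M))$, since $U$ is only assumed nondecreasing; the inequality is all that Markov needs, and it is the inequality the paper uses as well.
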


\begin{proof}
   This is an immediate consequence of Proposition \ref{prop: l0-bded iff finite utility} below.
\end{proof}

In conclusion we showed that the (NFLVR) condition is not very robust with respect to filtration enlargements. Since (NFLVR) is equivalent to (NA) and (NA1), either (NA) or (NA1) must be violated after a typical filtration enlargement. We observed that the maximal expected logarithmic utility under an enlarged filtration may well be finite, and that this implies (NA1). Therefore we conclude that (NA) is the part of (NFLVR) that is less robust with respect to filtration enlargements (see Remark \ref{rmk:filtration discussion} for a more thorough discussion). Moreover in the examples where (NA1) holds, Jacod's criterion is satisfied as well. As we saw above, Jacod's criterion implies the existence of a dominating local martingale measure. Hence (NA1) seems to be related to the existence of a dominating local martingale measure. In this paper we prove that the two conditions are equivalent.

\subsection*{Supermartingale densities}

Now let us assume that we are given a dominating local martingale measure $Q \gg P$, and let us examine what type of object this gives us under $P$. We consider a fixed right-continuous filtration $(\F_t)$, and we assume that $S$ is a local martingale under $Q$ with $P \ll Q$. Define $\gamma$ to be the right-continuous density process, $\gamma_t = dP/dQ|_{\F_t}$. Then $T = \inf\{t \ge 0: \gamma_t = 0\}$ is a stopping time, and we can define the adapted process $Z_t = 1_{\{ t < T\}}/\gamma_t$. Let $H$ be 1-admissible for $S$ under $Q$, that is $Q(\int_0^{t} H_s dS_s \ge -1)=1$ for all $t \ge 0$. Let $s,t\ge 0$ and let $A \in \F_t$. We have
\begin{align}\label{eq:supermartingale density derivation}
   E_P(1_{A} Z_{t+s}(1 + (H\cdot S)_{t+s})) & = E_Q\left( \gamma_{t+s} 1_{A} \frac{1_{\{t+s < T\}}}{\gamma_{t+s}}(1 + (H\cdot S)_{t+s})\right) \\ \nonumber
   & \le E_Q\left( 1_{A} 1_{\{t < T\}}(1 + (H\cdot S)_{t+s}) \right) \\ \nonumber
   & \le E_Q\left( 1_{A} 1_{\{t < T\}}(1 + (H\cdot S)_{t}) \right) \\ \nonumber
   & = E_P(1_{A} Z_t (1 + (H\cdot S)_t))
\end{align}
using in the second line that $1_{A}(1+ (H\cdot S)_{t+s})$ is nonnegative, and in the third line that $1 + (H\cdot S)$ is a nonnegative $Q$-local martingale and therefore a $Q$-supermartingale. This indicates that $Z$ should be a supermartingale density. Of course here we only considered strategies that are 1-admissible under $Q$, and there might be strategies that are 1-admissible under $P$ but not under $Q$. The way to deal with this problem is to consider $S^{T-}$ rather than $S$. We will make this rigorous later.

Note that the couple $(Z,T)$ is the \emph{Kunita-Yoeurp decomposition} of $Q$ with respect to $P$. The Kunita-Yoeurp decomposition is a progressive Lebesgue decomposition on filtered probability spaces. It was introduced in Kunita \cite{Kunita1976} in a Markovian context, and generalized to arbitrary filtered probability spaces in Yoeurp \cite{Yoeurp1985}. Namely we have for every $t \ge 0$
\begin{enumerate}
   \item $P(T = \infty) = 1$,
   \item $Q(\cdot \cap \{T \le t\})$ and $P$ are mutually singular on $\F_t$,
   \item $Q(\cdot \cap \{T > t\})|_{\F_t} \ll P|_{\F_t}$, and for $A \in \F_t$ we have $Q(A \cap \{T>t\}) = E_P(1_A Z_t)$.
\end{enumerate}

Hence our program will be to find a supermartingale density $Z$, and to construct a measure $Q$ and a stopping time $T$, such that $(Z,T)$ is the Kunita-Yoeurp decomposition of $Q$ with respect to $P$. But the second part was already solved by \cite{Yoeurp1985}, and $Q$ will be the F\"ollmer measure of $Z$. After studying the relation between $S$ and $Z$, we will see that $S^{T-}$ is a local martingale under $Q$.

Before doing so, let us prove Lemma \ref{lem:utility concrete}. This is an immediate consequence of the following de la Vall\'{e}e-Poussin type theorem for families of random variables that are bounded in $L^0$.
\begin{prop} \label{prop: l0-bded iff finite utility}
   A family of random variables $\X$ is bounded in probability if and only if there exists a nondecreasing and unbounded function $U$ on $[0, \infty)$, such that
   \begin{align*}
      \sup_{X \in \X} E(U(|X|)) < \infty.
   \end{align*}
   In this case $U$ can be chosen concave and such that $U(0) = 0$.
\end{prop}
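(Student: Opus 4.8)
The plan is to prove the two implications separately; the substantive content is entirely in the direction ``bounded in probability $\Rightarrow$ exists such a $U$''.

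\textbf{Easy direction.} Suppose $U$ is nondecreasing and unbounded with $\sup_{X \in \X} E(U(|X|)) =: C < \infty$. Replacing $U$ by $x \mapsto U(x) - U(0)$, we may assume $U(0) = 0$ and $U \ge 0$ (since $U$ is nondecreasing); this keeps $C$ finite and $U$ nondecreasing and unbounded. For $M > 0$ with $U(M) > 0$, Markov's inequality gives $\sup_{X \in \X} P(|X| \ge M) \le C / U(M)$, and since $U$ is nondecreasing and unbounded we have $U(M) \to \infty$ as $M \to \infty$, so $\X$ is bounded in probability.

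\textbf{Converse.} The idea is to look for $U$ of the form $U(x) = \int_0^x u(t)\,dt$ with $u \ge 0$ nonincreasing: such a $U$ is automatically concave, nondecreasing, with $U(0) = 0$, and it is unbounded precisely when $\int_0^\infty u(t)\,dt = \infty$. By Tonelli, $E(U(|X|)) = \int_0^\infty u(t)\, P(|X| > t)\,dt \le \int_0^\infty u(t)\, g(t)\,dt$, where $g(t) := \sup_{X \in \X} P(|X| > t)$ is nonincreasing and, by boundedness in probability, satisfies $g(t) \to 0$ as $t \to \infty$ (if $g$ vanishes at some finite point, then all $|X|$ are bounded by a constant a.s.\ and $U(x) = x$ already works, so assume $g > 0$ throughout). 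It therefore suffices to construct a nonnegative nonincreasing $u$ with $\int_0^\infty u = \infty$ but $\int_0^\infty u g < \infty$. To do this, choose recursively $0 = t_0 < t_1 < t_2 < \dots$ so that $g(t_n) \le 4^{-n}$ and the increments $\ell_n := t_{n+1} - t_n$ are nondecreasing in $n$ --- both achievable by taking each $t_{n+1}$ large enough, using $g \downarrow 0$ --- and set $u \equiv v_n := \frac{1}{(n+1)\ell_n}$ on $[t_n, t_{n+1})$. Then $(v_n)$ is nonincreasing (because $(\ell_n)$ is nondecreasing), so $u$ is nonincreasing; $\int_0^\infty u = \sum_n v_n \ell_n = \sum_n \frac{1}{n+1} = \infty$; and $\int_0^\infty u g = \sum_n v_n \int_{t_n}^{t_{n+1}} g \le \sum_n v_n \ell_n g(t_n) \le 1 + \sum_{n \ge 1} \frac{4^{-n}}{n+1} < \infty$. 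The resulting $U$ has all the required properties, which together with $E(U(|X|)) \le \int_0^\infty u g$ for every $X \in \X$ finishes the proof.

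The only real obstacle is this last construction: one must make the primitive $U$ unbounded, i.e.\ $\int u = \infty$, while simultaneously keeping $\sup_{X} E(U(|X|)) \le \int u g < \infty$ and preserving concavity (i.e.\ $u$ nonincreasing). The device above resolves all three at once by distributing a fixed ``mass'' $v_n \ell_n = \frac{1}{n+1}$ of $u$ over blocks $[t_n, t_{n+1})$ that are placed far enough into the tail that $g(t_n)$ is summably small and made long enough that $u$ remains nonincreasing; this is the concave counterpart of the de la Vall\'ee--Poussin theorem.
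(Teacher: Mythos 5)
Your proof is correct, and it follows the same high-level strategy as the paper: in the nontrivial direction, both proofs build $U(x)=\int_0^x u(t)\,dt$ for a nonnegative, nonincreasing step function $u$, reduce via Tonelli to finding $u$ with $\int_0^\infty u = \infty$ yet $\int_0^\infty u\, g < \infty$ where $g$ is the tail supremum, and this automatically delivers concavity and $U(0)=0$. The technical execution differs, though. The paper uses unit-width intervals $[n-1,n)$ and builds the level sequence $g_n$ by a Ces\`aro-averaging device (choosing cutoffs $K_m$ so that $\frac{1}{K_m}\sum_{k\le K_m}F_\X(k-1)\le 1/m$, then superposing flat profiles $g^m_k=\frac{1}{mK_m}$); monotonicity of $(g_n)$ comes out of the superposition structure. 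You instead use variable-width intervals $[t_n,t_{n+1})$ pushed far enough into the tail that $g(t_n)\le 4^{-n}$ decays geometrically, and distribute a fixed mass $\tfrac{1}{n+1}$ over each block, enforcing monotonicity of $u$ by insisting that the block lengths $\ell_n$ be nondecreasing (which is cheap since you can always choose $t_{n+1}$ larger). Your construction is arguably more direct --- it avoids the double sum and Fubini interchange --- at the cost of having to say a word about the degenerate case $g(t_0)=0$ for some finite $t_0$, which you handle. Both are variants of de la Vall\'ee--Poussin; the trade-off is Ces\`aro averaging on a fixed grid versus a geometric escape to infinity on an adaptive grid.
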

\begin{proof}
   First, assume that such a $U$ exists. Then
   \begin{align*}
      \sup_{X \in \X} P(|X| \ge M) \le \sup_{X \in \X} P(U(|X|) \ge U(M)) \le \frac{\sup_{X \in \X} E(U(|X|))}{U(M)}.
   \end{align*}
   Since $U$ is unbounded, the right hand side converges to zero as $M$ tends to $\infty$.
   
   Conversely, assume that $\X$ is bounded in probability. We need to construct a nondecreasing, unbounded, and concave function $U$ with $U(0)=0$, such that $E(U(|X|))$ is bounded for $X$ running through $\X$. Our construction is inspired by the proof of de la Vall\'{e}-Poussin's theorem. That is, we will construct a function $U$ of the form
   \begin{align*}
      U(x) = \int_0^x g(y) dy \qquad \text{where} \qquad g(y) = g_n, y \in [n-1, n)
   \end{align*}
   for a decreasing sequence of positive numbers $g_n$. This $U$ will be increasing, concave, $U(0) = 0$. It will be unbounded if and only if $\sum_{n=1}^\infty g_n = \infty$.

   For $U$ of this form we have by monotone convergence and Fubini (all terms are nonnegative)
   \begin{align*}
      E(U(|X|)) & = \sum_{n=1}^\infty E(U(|X|) 1_{\{ |X| \in [n-1, n)\}} ) \le \sum_{n=1}^\infty U(n) P( |X| \in [n-1, n) ) \\
      & = \sum_{n=1}^\infty \sum_{k=1}^n g_k P( |X| \in [n-1, n) ) = \sum_{k=1}^\infty \sum_{n=k}^\infty g_k P(|X| \in [n-1,n)) \\
      & = \sum_{k=1}^\infty g_k P(|X| \ge k-1) \le \sum_{k=1}^\infty g_k F_\X(k-1),
   \end{align*}
   where $F_\X(k-1) = \sup_{X \in \X} P(|X| \ge k-1)$.
   
   So the proof is complete if we can find a decreasing sequence $(g_k)$ of positive numbers, such that $\sum_{k=1}^\infty g_k = \infty$ but $\sum_{k=1}^\infty g_k F_\X(k-1) < \infty$. Let $n \in \N$. By assumption $(F_\X(k))$ converges to zero as $k \rightarrow \infty$, and therefore it also converges to zero in the Ces\`{a}ro sense. So we obtain for large enough $K_n$
   \begin{align}\label{eq:cesaro-conv}
      \frac{1}{K_n}\sum_{k=1}^{K_n} F_\X(k-1) \le \frac{1}{n}.
   \end{align}
   We choose an increasing sequence of numbers $K_n \ge n$, such that every $K_n$ satisfies \eqref{eq:cesaro-conv}. Define
   \begin{align*}
      g^n_k  = \begin{cases}
                          \frac{1}{nK_n}, &  k \le K_n \\
                          0, & k > K_n, 
                       \end{cases}
   \end{align*}
   and let $n_k$ denotes the smallest $n$ for which $g^n_k \neq 0$, i.e. the smallest $n$ for which $K_n \ge k$. The sequence $(K_n)$ is increasing, and therefore $n_k \le n_{k+1}$ for all $k$. Then the sequence $(g_k)$, where
   \begin{align*}
      g_k = \sum_{n=1}^\infty g^n_k = \sum_{n = n_k}^\infty \frac{1}{n K_n} \le \sum_{n=n_k}^\infty \frac{1}{n^2} < \infty,
   \end{align*}
   is decreasing in $k$. Moreover we have by Fubini
   \begin{align*}
      \sum_{k=1}^\infty g_k = \sum_{k=1}^\infty \sum_{n=1}^\infty g^n_k = \sum_{n=1}^\infty \sum_{k=1}^\infty g^n_k = \sum_{n=1}^\infty \sum_{k=1}^{K_n} \frac{1}{nK_n} = \sum_{n=1}^\infty \frac{1}{n} = \infty,
   \end{align*}
   and at the same time we get from \eqref{eq:cesaro-conv}
   \begin{align*}
      \sum_{k=1}^\infty g_k F_\X(k-1) = \sum_{n=1}^\infty \sum_{k=1}^{K_n} \frac{F_\X(k-1)}{nK_n} \le \sum_{n=1}^\infty \frac{1}{n^2} < \infty,
   \end{align*}
   which completes the proof.
\end{proof}

\begin{rmk}
   In Loewenstein and Willard \cite{Loewenstein2000}, Theorem 1, it is shown that the utility maximization problem for It\^{o} processes is well posed if and only if there is absence of a certain notion of arbitrage. They describe the critical arbitrage opportunities very precisely, and they consider more general utility maximization problems, allowing for intermediate consumption. Proposition~\ref{prop: l0-bded iff finite utility} is much simpler and more obvious, but therefore also more robust. It is applicable in virtually any context, say to discontinuous price processes that are not semimartingales, with transaction costs, and under trading constraints. The family of portfolios need not even be convex.
\end{rmk}

\begin{rmk}
   Note that supermartingale densities are the dual variables in the utility maximization problem, see \cite{Kramkov1999}. So taking Proposition \ref{prop: l0-bded iff finite utility} into account, Theorem \ref{thm: ex supermartingale density} states that the utility maximization problem is non degenerate if and only if the space of dual minimizers is nonempty. This insight might also be useful in more complicated contexts, say in market with transaction costs. As a sort of meta-theorem holding for many utility maximization problems, we expect that the space of dual variables is nonempty if and only if the space of primal variables is bounded in probability.
\end{rmk}

A first corollary is that any locally bounded process satisfying (NA1$_s$) is a semimartingale.

\begin{cor}
   Let $S$ be a locally bounded, c\`adl\`ag process satisfying (NA1$_s$). Then $S$ is a semimartingale.
\end{cor}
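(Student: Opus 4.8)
The plan is to use Theorem \ref{thm: ex supermartingale density} to produce a strictly positive supermartingale density, ``divide it out'', and use local boundedness to reduce to bounded integrands. Since $S$ is a.s.\ right-continuous, adapted and satisfies (NA1$_s$), Theorem \ref{thm: ex supermartingale density} furnishes a supermartingale density $Z$ for $\W_{1,s}$: an a.s.\ c\`adl\`ag nonnegative supermartingale with $Z_\infty>0$ a.s., such that $Z(1+(H\cdot S))$ is a supermartingale for every $1$-admissible simple strategy $H$. Because a nonnegative supermartingale is absorbed at $0$, the condition $Z_\infty>0$ a.s.\ forces $Z_t>0$ and $Z_{t-}>0$ for all $t$, a.s. A c\`adl\`ag supermartingale is a semimartingale (it has finite conditional variation on each bounded interval, hence is a quasimartingale, hence a semimartingale), so $Z$ is a semimartingale; and since $x\mapsto 1/x$ is $C^2$ on the open set $(0,\infty)$, in which $Z$ and its left limits take their values, It\^o's formula shows that $1/Z$ is a semimartingale as well.

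Next I would localise. By local boundedness choose stopping times $0<\rho_n<\infty$ with $\rho_n\uparrow\infty$ and $|S^{\rho_n}|\le c_n$ for constants $c_n$. Fix $n$ and a coordinate $i$, set $\varepsilon=1/(2c_n)$, and consider the simple strategy $H=\varepsilon e_i 1_{(0,\rho_n]}$, where $e_i$ is the $i$-th unit vector. Then $(H\cdot S)_t=\varepsilon(S^i_{\rho_n\wedge t}-S^i_0)$, so $|(H\cdot S)_t|\le\varepsilon\cdot 2c_n=1$ and $H$ is $1$-admissible. Hence $1+(H\cdot S)\in\W_{1,s}$, so by the supermartingale density property $Z(1+(H\cdot S))$ is a supermartingale, in particular a semimartingale. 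Since $1/Z$ is a semimartingale and products of semimartingales are semimartingales, $1+(H\cdot S)=\big(Z(1+(H\cdot S))\big)\cdot(1/Z)$ is a semimartingale, hence so is $(H\cdot S)$, and therefore $(S^i)^{\rho_n}=S^i_0+\varepsilon^{-1}(H\cdot S)$. As $i$ and $n$ were arbitrary, every stopped process $S^{\rho_n}$ is a semimartingale, and the local character of the semimartingale property together with $\rho_n\uparrow\infty$ shows that $S$ is a semimartingale.

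The conceptual content is entirely contained in Theorem \ref{thm: ex supermartingale density}; what remains is routine, and I would expect no genuine obstacle. The only points requiring a little care are bookkeeping ones: that the localising sequence can be chosen with $0<\rho_n<\infty$ so that $H$ really is a simple strategy in the sense of this paper, and that the cited facts — a supermartingale is a semimartingale, $C^2$ functions and products preserve the semimartingale property, and semimartingales are stable under the above localisation — go through in the present a.s.-c\`adl\`ag, non-complete-filtration setting, which is exactly what is addressed in Section \ref{sec: supermartingale densities} and Appendix \ref{app:complete filtration}. A more elementary route via the Bichteler--Dellacherie theorem would in any case have to reintroduce such a $Z$ (a ``num\'eraire'' rescaling) in order to convert $1$-admissibility into boundedness of integrands, so passing through Theorem \ref{thm: ex supermartingale density} is the natural path.
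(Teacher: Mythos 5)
Your proof is correct, but it does not follow the paper's own proof of \emph{this} corollary. The paper proves it in two lines: Proposition \ref{prop: l0-bded iff finite utility} converts boundedness in probability of $\K_{1,s}$ into the existence of an unbounded utility function $U$ with $\sup_{X\in\K_{1,s}}E(U(X))<\infty$, and then Theorem~7.4.3 of Ankirchner's thesis \cite{Ankirchner2005} is cited to conclude that a locally bounded process admitting such a well-posed utility maximization problem is a semimartingale. Your argument — produce a supermartingale density $Z$ via Theorem \ref{thm: ex supermartingale density}, observe that $1/Z$ is a semimartingale by It\^o's formula on $(0,\infty)$, write $1+(H\cdot S) = (1/Z)\,\bigl(Z(1+(H\cdot S))\bigr)$ for a unit simple strategy $H$, and localise using local boundedness of $S$ — is precisely the route the paper takes later, in Corollary \ref{cor:abstract semimartingale} combined with the proof of Corollary \ref{cor:NA1 implies semimartingale}. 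Indeed, the paper remarks immediately after its proof that ``This result will also be an immediate consequence of Theorem \ref{thm: ex supermartingale density}.'' So you have independently reconstructed the paper's second proof rather than its first. The paper's route is shorter but outsources the hard analysis to an external reference; yours is self-contained and makes the role of the supermartingale density and local boundedness explicit. Your bookkeeping is sound: you correctly note that $Z_\infty>0$ a.s.\ forces $Z,Z_->0$ everywhere by absorption at zero; that the localising times can be taken finite and positive so that $H=\varepsilon e_i 1_{(0,\rho_n]}$ is a legitimate simple strategy with $|(H\cdot S)_t|\le 1$; that $(S^i)^{\rho_n}=S^i_0+\varepsilon^{-1}(H\cdot S)$; and that local semimartingales are semimartingales (for which the paper cites Protter, with the non-complete-filtration issue handled via Appendix \ref{app:complete filtration}).
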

\begin{proof}
   Since $\K_{1,s}$ is bounded in probability, Proposition \ref{prop: l0-bded iff finite utility} implies that there exists an unbounded utility function $U$ for which $\sup_{X \in \K_{1,s}} E(U(X))<\infty$. Theorem 7.4.3 of \cite{Ankirchner2005} then implies that $S$ is a semimartingale.
\end{proof}

This result will also be an immediate consequence of Theorem \ref{thm: ex supermartingale density}.

\section{Existence of supermartingale densities}
\label{sec: supermartingale densities}

Now let us prove Theorem \ref{thm: ex supermartingale density} Let $(\Omega, \F, (\F_t)_{t \ge 0}, P)$ be a filtered probability space with a right-continuous filtration. We do not require $(\F_t)$ to be complete. This goes against a long tradition in probability theory to only work with filtrations satisfying the usual conditions. The most important reasons to consider complete filtrations are that the cross-section theorem (\cite{Dellacherie1980}, 44) only holds in complete $\sigma$-algebras, and as a consequence entrance times into Borel sets are generally only stopping times with respect to complete filtrations, and that supermartingales only have c\`adl\`ag modifications in complete filtrations.

However there are at least two classical books in stochastic analysis that avoid using complete filtrations as much as possible, Jacod \cite{Jacod1979} and Jacod and Shiryaev \cite{Jacod2003}. With non-complete filtrations one can obtain results that are nearly as powerful as the ones for complete filtrations. For example every stopping time $T$ in the completed filtration $(\F^P_t)$ is $P$-a.s. equal to a $(\F_t)$-stopping time $\tilde{T}$. And it is easy to see that entrance times of right-continuous processes into open or closed sets are hitting times as long as $(\F_t)$ is right-continuous. If $(\F_t)$ is right-continuous, then any supermartingale $Z$ with right-continuous expectation $t \mapsto E(Z_t)$ has a modification that is right-continuous for \emph{every} $\omega \in \Omega$, and which $P$-a.s. has left limits, see Remark I.1.37 of \cite{Jacod2003}. Note also that in \cite{Jacod2003} stochastic integration is done for non-complete filtrations. In Appendix \ref{app:complete filtration} we moreover recall that for every $(\F^P_t)$-adapted process that is a.s. c\`adl\`ag there exists an indistinguishable $(\F_t)$-adapted process, and similar results hold for $(\F^P_t)$-predictable and -optional processes.


We hope that this convinces the reader that there are no problems with using non-complete filtrations. Whenever we apply a result not from \cite{Jacod1979} or \cite{Jacod2003}, we point out why it also holds under non-complete filtrations.

After becoming aware of Rokhlin's work \cite{Rokhlin2010}, we noticed that our arguments prove in fact the main result of \cite{Rokhlin2010}, which is stronger than Theorem \ref{thm: ex supermartingale density}.

A family of nonnegative stochastic processes $\Y$ is called \emph{fork-convex}, see \cite{Zitkovic2002} or \cite{Rokhlin2010}, if every $Y \in \Y$ stays in zero once it hits zero, i.e. $Y_s = 0$ implies $Y_t = 0$ for all $0\le s \le t < \infty$, and if further for all $Y^1, Y^2, Y^3 \in \Y$, for all $s > 0$, and for all $\F_s$-measurable random variables $\lambda_s$ with values in $[0,1]$, we have that
\begin{align}\label{eq:fork-convex}
   Y_\cdot = 1_{[0,s)}(\cdot) Y^1_s + 1_{[s,\infty)}(\cdot) Y^1_s \left( \lambda_s \frac{Y^2_\cdot}{Y^2_s} + (1 - \lambda_s) \frac{Y^3_\cdot}{Y^3_s}\right) \in \Y.
\end{align}
Here and throughout the paper we interpret $0/0=0$. Note that a fork-convex family of processes with $Y_0 = 1$ for all $Y \in \Y$ is convex. If moreover $\Y$ contains the constant process 1, then $\Y$ is stable under stopping at deterministic times, i.e. for all $Y \in \Y$ and for all $t \ge 0$ also $Y_{\cdot \wedge t} \in \Y$.


Rokhlin's \cite{Rokhlin2010} main result is the following.

\begin{thm}\label{thm:abstract ex supermartingale density}
   Let $\Y$ be a fork-convex family of right-continuous and nonnegative processes containing the constant process 1 and such that $Y_0=1$ for all $Y \in \Y$. Let
   \begin{align*}
      \K = \left\{ Y_\infty: Y \in \Y, Y_\infty = \lim_{t\rightarrow \infty} Y_t \text{ exists}\right\}.
   \end{align*}
   Then $\K$ is bounded in probability if and only if there exists a supermartingale density for $\Y$.
\end{thm}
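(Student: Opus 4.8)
\emph{Plan.} I would prove the two implications separately; the first is short, and the second is where convex compactness does the real work.

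\emph{A supermartingale density forces boundedness in probability.} Suppose $Z$ is a supermartingale density for $\Y$, and let $Y\in\Y$ be such that $Y_\infty=\lim_{t\to\infty}Y_t$ exists a.s. Then $YZ$ is a nonnegative supermartingale, so $E(Y_tZ_t)\le E(Y_0Z_0)=E(Z_0)$ for every $t$ (using $Y_0=1$); since $Z_\infty=\lim_tZ_t$ exists a.s. by the supermartingale convergence theorem, Fatou's lemma gives $E(Y_\infty Z_\infty)\le E(Z_0)<\infty$, with a bound uniform in $Y$. As $Z_\infty>0$ a.s., given $\varepsilon>0$ choose $\delta>0$ with $P(Z_\infty<\delta)<\varepsilon$; then for every such $Y$ and every $M>0$,
\begin{align*}
   P(Y_\infty\ge M)\le P(Z_\infty<\delta)+\frac{E(Y_\infty Z_\infty)}{\delta M}\le\varepsilon+\frac{E(Z_0)}{\delta M}.
\end{align*}
Letting $M\to\infty$ and then $\varepsilon\to0$ shows $\sup_{X\in\K}P(X\ge M)\to0$, i.e. $\K$ is bounded in probability.

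\emph{Boundedness in probability produces a supermartingale density.} Assume now $\K$ is bounded in probability. Since the members of $\Y$ are right-continuous and $\Y$ is stable under stopping at deterministic times, it suffices to build a nonnegative supermartingale $Z$ indexed by $D=(\Q\cap[0,\infty))\cup\{\infty\}$ with $Z_0=1$, $Z_\infty>0$ a.s., and $ZY$ a supermartingale along $D$ for every $Y\in\Y$; right-continuity of $(\F_t)$ then provides a right-continuous modification with the same properties on $[0,\infty)$. Observe that stopping $Y$ at $t$ gives $\{Y_t:Y\in\Y\}\subseteq\K$, so each time-slice is bounded in probability and, being a fork-convex family with $Y_0=1$, also convex. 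Fix, via Proposition \ref{prop: l0-bded iff finite utility}, a concave, nondecreasing, unbounded $U$ with $U(0)=0$ and $C:=\sup_{X\in\K}E(U(X))<\infty$; note $U$ is continuous, and by concavity and monotonicity $E(U(\xi))\le C$ for every $\xi$ in the closed convex solid hull of $\{Y_t:Y\in\Y\}$ in $L^0_+(\F_t)$. For each $n\in\N$ I would then construct, by maximising $E(U(\cdot))$ over the closed convex solid hull of $\{Y_n:Y\in\Y\}$, a ``num\'eraire-type'' maximiser and from it a nonnegative supermartingale $Z^n$ on the dyadic grid of $[0,n]$ with $Z^n_0=1$, with $Z^nY$ a supermartingale along that grid for every $Y\in\Y$, and with $E(U(1/Z^n_n))\le C$. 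The existence of the maximiser, obtained as an a.s. limit of convex combinations, is exactly a convex-compactness statement (Zitkovic \cite{Zitkovic2010}) for a convex set bounded in probability; fork-convexity \eqref{eq:fork-convex} is what allows those convex combinations to be realised inside $\Y$ on $[0,n]$ and grafted to the constant $1$ afterwards, and a first-order perturbation argument against such fork-convex perturbations of the maximiser converts its optimality into the supermartingale inequalities for $Z^nY$ (the case $U=\log$ is the familiar num\'eraire-portfolio computation; a general $U$ needs only minor extra care).

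\emph{Passage to the limit.} Extending each $Z^n$ by its terminal value past time $n$, the family $\{Z^n\}$ consists of supermartingales with $Z^n_0=1$, hence is bounded in $L^1$ at every time; a further appeal to convex compactness together with a diagonal argument over the countable set $D$ yields convex combinations of the $Z^n$ converging, in a Fatou sense along $D$, to a nonnegative supermartingale $Z$ with $Z_0=1$ and $Z_\infty=\lim_tZ_t$ existing, and this limiting procedure preserves the supermartingale property of $ZY$ for each $Y\in\Y$. Finally $E(U(1/Z_\infty))\le C<\infty$ by Fatou and continuity of $U$, so unboundedness of $U$ forces $1/Z_\infty<\infty$, i.e. $Z_\infty>0$ a.s.; a right-continuous modification then completes the construction.

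\emph{Main obstacle.} I expect the genuinely delicate points to be the bookkeeping that turns $U$-optimality into the supermartingale inequalities for $Z^n$ while remaining inside the non-closed fork-convex class $\Y$, and the verification that $Z_\infty>0$ a.s. — the latter being precisely where boundedness in probability of all of $\K$, rather than of each single time-slice, is used. By contrast, the extraction of convergent convex combinations and the stability of the supermartingale property under these limits are routine once convex compactness is invoked, which is the methodological point the authors wish to highlight.
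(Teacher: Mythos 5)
Your easy direction is correct and is essentially what the paper does: the paper routes the final step through Lemma~\ref{prop: l0-bded iff Z ex}, whereas you carry out the Markov-type estimate with $P(Z_\infty<\delta)$ by hand, but the content is the same.

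For the hard direction you take a genuinely different route. The paper constructs the one-period supermartingale density directly as the Radon--Nikodym derivative of the set function $\mu(A)=\sup_{Y\in\Y}E(1_A\,Y_1/Y_0)$, which fork-convexity makes $\sigma$-additive (Lemma~\ref{lem: one period supermartingale}); a backward induction then handles finite discrete time (Corollary~\ref{cor: discrete time supermartingale}); and convex compactness appears only once, as a Tychonoff-type argument over the constraint sets $\mathcal{C}(q,r)$ on the rational grid (Lemma~\ref{lem: Q indexed supermartingale}). You instead propose a utility-maximization construction in the num\'eraire-portfolio / Kramkov--Schachermayer tradition, extracting $Z^n$ from a $U$-optimal wealth process on $[0,n]$ and then diagonalizing. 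Both approaches invoke convex compactness, but in different places and to different ends; the paper's Radon--Nikodym construction is the more elementary of the two, and it never has to produce or differentiate an optimizer.

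There is, however, a genuine gap at the step you describe as ``a first-order perturbation argument \dots converts its optimality into the supermartingale inequalities for $Z^nY$ (the case $U=\log$ is the familiar num\'eraire-portfolio computation; a general $U$ needs only minor extra care).'' This is not minor. The num\'eraire computation works because the first-order condition for $\log$, namely $E(Y_T/Y^*_T)\le 1$, \emph{is} the normalized supermartingale inequality; that coincidence is special to $\log$. But you cannot assume $\sup_{Y}E(\log Y_\infty)<\infty$: boundedness in probability of $\K$ gives you, via Proposition~\ref{prop: l0-bded iff finite utility}, only some slowly growing concave $U$ --- not necessarily differentiable, not satisfying Inada conditions or reasonable asymptotic elasticity --- and the $\log$-supremum may well be $+\infty$. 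For such a $U$ the first-order condition reads $E\bigl(U'(Y^*)(Y-Y^*)\bigr)\le 0$, which does not by itself yield the supermartingale property of $U'(Y^*)\,Y$; bridging that gap is precisely the hard content of Kramkov--Schachermayer duality, and it is not available for free for the $U$ you are handed. The paper's Lemma~\ref{lem: one period supermartingale} is designed exactly to sidestep this: it delivers the supermartingale inequality directly from fork-convexity and a Radon--Nikodym derivative, with no utility function and no optimizer in sight.
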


We split up the proof in several lemmas.

\begin{lem}\label{prop: l0-bded iff Z ex}
   Let $\X$ be a convex family of nonnegative random variables. Then $\X$ is bounded in probability if and only if there exists a strictly positive random variable $Z$ such that 
   \begin{align*}
      \sup_{X \in \X} E(X Z) < \infty.
   \end{align*}
\end{lem}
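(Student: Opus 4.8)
The plan is to prove the two implications separately; the ``if'' direction is a short Markov/Chebyshev argument, while the ``only if'' direction is the substance.

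For the ``if'' direction: suppose such a strictly positive $Z$ exists, with $C := \sup_{X \in \X} E(XZ) < \infty$. For $M > 0$ and $\delta > 0$ split according to whether $Z \ge \delta$. Since $Z > 0$ a.s., we may choose $\delta = \delta(\varepsilon)$ with $P(Z < \delta) < \varepsilon/2$. Then for every $X \in \X$,
\begin{align*}
   P(X \ge M) \le P(Z < \delta) + P(X \ge M, Z \ge \delta) \le \frac{\varepsilon}{2} + \frac{1}{\delta M} E(XZ \mathbf{1}_{\{Z \ge \delta\}}) \le \frac{\varepsilon}{2} + \frac{C}{\delta M},
\end{align*}
which is below $\varepsilon$ uniformly in $X \in \X$ once $M$ is large. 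Hence $\X$ is bounded in probability.

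For the ``only if'' direction, assume $\X$ is bounded in probability. The key tool is Proposition \ref{prop: l0-bded iff finite utility}, which hands us a nondecreasing, unbounded, concave function $U$ with $U(0) = 0$ and $\sup_{X \in \X} E(U(|X|)) < \infty$; since the elements of $\X$ are nonnegative we may drop the absolute value. The idea is to take $Z$ to be (a strictly positive version of) the right derivative of $U$, i.e. $Z$ should play the role of $U'$ evaluated at a suitable point, but the difficulty is that $Z$ must be a single random variable that works for \emph{all} $X$ simultaneously. Concavity gives the pointwise inequality $U(y) - U(x) \ge U'_+(y)(y - x)$ for the right derivative $U'_+$; unfortunately $U'_+(|X|)$ depends on $X$. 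The standard fix, exploiting convexity of $\X$, is as follows. Because $U$ is concave with $U(0)=0$, we have $U(x) \le x\, U'_+(0)$ is too crude; instead I would argue via a separating-hyperplane / minimax idea: consider on the convex set $\X$ (or its closure in $L^0$, which is still bounded in probability and convex) the concave functional $X \mapsto E(U(X))$, and extract a maximizing sequence $X_n$ with $E(U(X_n)) \to \beta := \sup_{X \in \X} E(U(X)) < \infty$. By a Komlós-type or convex-compactness argument (convex combinations converging a.s.) one obtains a limit point $\hat X$, possibly in the closure, attaining or nearly attaining $\beta$. Then for any $X \in \X$, convexity of $\X$ gives $(1-t)\hat X + tX \in \X$, so $E(U((1-t)\hat X + tX)) \le \beta$; subtracting $E(U(\hat X))$, dividing by $t$, and letting $t \downarrow 0$ yields (after justifying the interchange of limit and expectation via monotonicity/concavity bounds) the variational inequality $E(U'_+(\hat X)(X - \hat X)) \le 0$, i.e. $E(U'_+(\hat X) X) \le E(U'_+(\hat X)\hat X) < \infty$ for all $X \in \X$. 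Setting $Z := U'_+(\hat X) \wedge 1$ — or better, $Z := U'_+(\hat X)$ itself — gives the desired bound, provided $Z > 0$ a.s.

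Strict positivity of $Z$ is where care is needed: $U'_+(\hat X)$ could vanish where $\hat X$ is large (if $U$ eventually flattens) or where $\hat X = \infty$. To repair this, I would not use a single $U$ but a countable family: for each $k$, boundedness in probability lets us pick $U_k$ with $\sup_{X} E(U_k(X)) < \infty$ and with $U_k$ growing fast enough past level $k$ (concretely, arrange $U'_{k,+} \ge c_k > 0$ on $[0,k]$). Carry out the variational argument for each $k$ to get $Z_k$ with $\sup_X E(Z_k X) < \infty$ and $Z_k > 0$ on a set where we have control; then set $Z = \sum_k 2^{-k} Z_k / (1 + \sup_X E(Z_k X))$, which is strictly positive a.s. and satisfies $\sup_{X \in \X} E(XZ) \le \sum_k 2^{-k} = 1 < \infty$ by monotone convergence.

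The main obstacle I anticipate is the existence of the maximizer $\hat X$ and the justification of the first-order condition: $\X$ need not be closed, $U(\hat X)$ may only be defined after passing to $L^0$-limits, and differentiating under the expectation requires a dominated-convergence argument using concavity (the difference quotients $t^{-1}(U((1-t)\hat X + tX) - U(\hat X))$ are monotone in $t$, which is exactly what makes the interchange legitimate). Alternatively, and perhaps more cleanly in this paper's spirit, one can bypass the maximizer entirely: since $\X$ is convex and bounded in $L^0$, its closure $\overline{\X}$ is convex and closed, and one applies a Hahn–Banach separation in a suitable topology (or the exhaustion argument of Lemma A1.1 of \cite{Delbaen1994}, or Zitkovic's convex compactness \cite{Zitkovic2010}) to separate $\overline{\X}$ from a point, producing the positive functional directly. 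I would present whichever is shortest; the countable-combination trick to enforce strict positivity is needed in either case.
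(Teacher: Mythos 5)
Your ``if'' direction is correct and slightly more elementary than the paper's: you split on $\{Z \ge \delta\}$ and apply Chebyshev, whereas the paper normalizes $Z$ to a density $dQ/dP$, reads off $L^1(Q)$-boundedness, hence boundedness in $Q$-probability, and transfers back via $P \ll Q$. Both are fine.

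For the ``only if'' direction the paper does essentially none of the work you attempt: it invokes Theorem~1 of Yan \cite{Yan1980}, an exhaustion/Hahn--Banach argument that already produces a bounded \emph{strictly positive} $Z$, and handles the issue that $\X$ need not lie in $L^1$ by applying Yan to the convex hull of the truncations $X \wedge n$, as in Remark~(c) of \cite{Dellacherie1980}, VIII-84. That is exactly the one-sentence ``alternative'' at the end of your proposal, left undeveloped. Your primary route --- construct $Z = U'_+(\hat X)$ as the marginal utility at a utility maximizer $\hat X$ --- is a genuinely different and attractive idea (it is the duality picture behind supermartingale deflators), but it has a real gap, which you yourself flag, precisely where you need a maximizer. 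The Komlós-type step produces convex combinations $Y_n$ of a maximizing sequence with $Y_n \to \hat X$ a.s.\ and $E(U(Y_n)) \to \beta$, but Fatou gives only $E(U(\hat X)) \le \liminf E(U(Y_n)) = \beta$; the reverse inequality is upper semicontinuity and requires uniform integrability of $(U(Y_n))_n$, which does not follow from $\sup_{X\in\X} E(U(X)) < \infty$ and is not arranged anywhere in your sketch. With only a near-maximizer the perturbation bound reads $t^{-1}\bigl(E(U((1-t)\hat X + tX)) - E(U(\hat X))\bigr) \le \varepsilon/t$, which is useless as $t \downarrow 0$, so the first-order condition cannot be extracted. (A patch exists: first replace $U$ by a strictly slower-growing $\tilde U$, still with $\sup_{X\in\X} E(\tilde U(X)) < \infty$ and $U/\tilde U \to \infty$; then $\tilde U(Y_n)$ is uniformly integrable by de la Vall\'ee-Poussin and the argument closes. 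But this step is essential and missing.) One minor inaccuracy: you say the countable-combination trick to force $Z > 0$ is needed ``in either case''. It is not --- Yan's theorem delivers strict positivity directly, and even on the variational route the $U$ from Proposition~\ref{prop: l0-bded iff finite utility} has $U'_+ > 0$ on all of $[0,\infty)$, so $U'_+(\hat X) > 0$ a.s.\ because $\hat X$ is a.s.\ finite.
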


\begin{proof}
   The sufficiency is Theorem 1 of \cite{Yan1980}. Note that Yan does not require the $\sigma$-algebra to be complete. Yan makes the additional assumption that $\X$ is contained in $L^1$. But since we are considering nonnegative random variables, this can be avoided by applying Theorem 1 of \cite{Yan1980} to the convex hull of the bounded random variables $\{X \wedge n\}$ for $n \in \N$, as suggested in Remark (c) of \cite{Dellacherie1980}, VIII-84.
   
   So let us assume that $Z$ exists. Normalizing by $E(Z)$, we obtain an equivalent measure $Q$ such that $\X$ is norm bounded in $L^1(Q)$ and therefore bounded in $Q$-probability. Since $P\ll Q$, it is easy to see that $\X$ is also bounded in $P$-probability.
\end{proof}

\begin{rmk}
   Convexity is necessary. Let $\{A^n_k: 1 \le k \le 2^n, n \in \N\}$ be an increasing sequence of partitions of $\Omega$, such that for every $n,k$ we have $P(A^n_k) = 2^{-n}$. Define the nonnegative random variables $X^n_k = 1_{A^n_k} 2^{2n}$. Then $(X^n_k: n,k)$ is bounded in probability. Let $Z$ be a nonnegative random variable such that $E(Z X^n_k) \le C$ for some $C>0$ and all $n, k$. Then
   \begin{align*}
      E(1_{A^n_k} Z) = E(Z X^n_k) 2^{-2n} \le C 2^{-2n}.
   \end{align*}
   Summing over $k$, we obtain $E(Z) \le C 2^{-n}$ for all $n$, and therefore $E(Z) = 0$. Since $Z\ge 0$, this implies $Z=0$.
\end{rmk}

We call a family of random variables \emph{$L^p$-bounded} for $p\ge 1$ if it is norm bounded in $L^p$.

\begin{rmk}
   Lemma \ref{prop: l0-bded iff Z ex} states that a convex family of nonnegative random variables $\X$ is bounded in probability if and only if there exists a measure $Q \sim P$, such that $\X$ is $L^1(Q)$-bounded. One might ask if this can be improved. For example there could exist $Q\sim P$ such that $\X$ is $L^p(Q)$-bounded for some $p>1$. However this is not true in general. Even for a continuous martingale $M$ there might not be an absolutely continuous $Q \ll P$, such that $\K_1(M)$ (defined in terms of $M$ as in \eqref{eq:K1}) is uniformly integrable under $Q$. To see this, choose an increasing sequence of partitions $(A^n_k: 1 \le k \le 2^n, n \in \N)$ of $\R$, such that $\nu(A^n_k) = 2^{-n}$ for all $n,k$, where $\nu$ denotes the standard normal distribution. Let $M$ be a Brownian motion. Define the random variables $X^n_k = 1_{A^n_k}(M_1) 2^n$. Then $X^n_k \in L^\infty$, and $E(X^n_k) = 1$ for all $n,k$. By the predictable representation property of Brownian motion, $X^n_k \in \K_1(M)$ for all $n, k$. Now let $Q \ll P$, and let $g \ge 0$ be such that $\lim_{x \rightarrow \infty} g(x) / x = \infty$. If we show that $(g(X^n_k))_{n,k}$ is unbounded in $L^1(Q)$, then de la Vall\'ee-Poussin's theorem implies that $\K_1(M)$ cannot be uniformly integrable under $Q$. Let $C > 0$ and let $n \in \N$ be such that $g(2^n) \ge C 2^{n}$. Choose $k$ for which $Q( M_1 \in A^n_k) \ge 2^{-n}$. Such a $k$ must exist because $Q$ has total mass 1. Then
   \begin{align*}
      E_Q(g(X^n_k)) \ge E_Q(1_{A^n_k}(M_1) 2^n C) \ge 2^{-n} 2^n C = C.
   \end{align*}
   Since $C>0$ was arbitrary, this shows that $E_Q(g(\cdot))$ is unbounded on $\K_1(M)$.
\end{rmk}

The following Lemma establishes Theorem \ref{thm:abstract ex supermartingale density} in the case of two time steps. The general case then follows easily.

\begin{lem}\label{lem: one period supermartingale}
   Let $\Y$ be a $L^1$-bounded family of nonnegative processes indexed by $\{0,1\}$, adapted to a filtration $(\F_0, \F_1)$. Assume that $\Y$ is fork-convex and that $\Y$ contains a process of the form $(1, Y^*_1)$ for a strictly positive $Y^*_1$.

   Then there exists a strictly positive $\F_0$-measurable random variable $Z$, such that $(Y_0 Z, Y_1)$ is a supermartingale for every $Y \in \Y$.  $Z$ can be chosen such that for every $Y \in \Y$
   \begin{align}\label{eq: bound on one period supermartingale}
      \qquad E(Y_0 Z) \le \sup_{Y\in \Y} \max_{i = 0,1} E(Y_i).
   \end{align}
\end{lem}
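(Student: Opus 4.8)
The plan is to reduce the two-period statement to the single-random-variable result of Lemma \ref{prop: l0-bded iff Z ex}. The key observation is that fork-convexity at time $s=0$ (with $Y^1$ the constant process $1$) says precisely that for any $Y^2, Y^3 \in \Y$ and any $\F_0$-measurable $\lambda \in [0,1]$, the process with time-$1$ value $\lambda Y^2_1/Y^2_0 + (1-\lambda) Y^3_1/Y^3_0$ — adjusted appropriately when denominators vanish, using the convention $0/0=0$ — lies in $\Y$. Consider therefore the set of random variables
\begin{align*}
   \X = \left\{ \frac{Y_1}{Y_0}\,1_{\{Y_0 > 0\}} + 1_{\{Y_0 = 0\}} : Y \in \Y \right\}
\end{align*}
(the additive $1_{\{Y_0=0\}}$ term is just a harmless normalization to keep things strictly positive where $Y_0$ vanishes; alternatively one restricts attention to $\{Y_0>0\}$ on which all relevant processes live). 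First I would check that $\X$ is a convex family of nonnegative random variables: convexity with $\F_0$-measurable weights follows from fork-convexity as just described, and convexity with constant weights is a special case. Then I would verify $\X$ is bounded in $L^1$, hence bounded in probability: this uses $L^1$-boundedness of $\Y$ together with the presence of the strictly positive process $(1, Y^*_1)$, which lets one control the "conditional" ratios $Y_1/Y_0$ on $\{Y_0>0\}$ by absolute values $Y_1$ after an appropriate measure change; here one must be slightly careful because dividing by $Y_0$ can blow up integrability, so the cleanest route may be to first establish boundedness in probability directly from fork-convexity (mixing in the constant process $1$ with weight concentrated on the set where $Y_0$ is small) and then invoke Lemma \ref{prop: l0-bded iff Z ex} in the probability-boundedness formulation.

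Next I would apply Lemma \ref{prop: l0-bded iff Z ex} to obtain a strictly positive random variable $\zeta$ with $\sup_{X \in \X} E(X \zeta) < \infty$; rescaling, we may assume $E(\zeta) \le C := \sup_{Y \in \Y}\max_{i=0,1} E(Y_i)$, and in fact by considering $\zeta \wedge (\text{const})$ or normalizing we can force the bound in \eqref{eq: bound on one period supermartingale}. Now set
\begin{align*}
   Z = \frac{E(\zeta \mid \F_0)}{\text{(normalization)}},
\end{align*}
or more directly, define $Z$ so that $E(1_A Y_0 Z) = E(1_A Y_0 \cdot (Y_1/Y_0)\, \zeta$-type expression$)$ balances — the point is to take $Z$ to be (a suitably normalized version of) the $\F_0$-conditional expectation $E(\zeta \mid \F_0)$, which is strictly positive since $\zeta$ is. The supermartingale property $(Y_0 Z, Y_1)$ then reads $E(1_A Y_1) \le E(1_A Y_0 Z)$ for all $A \in \F_0$; writing $Y_1 = (Y_1/Y_0) Y_0$ on $\{Y_0 > 0\}$ and noting $Y_1 = 0$ on $\{Y_0 = 0\}$ (since processes in $\Y$ stay at zero), this is exactly the statement that the $\F_0$-measurable weights in fork-convexity can be taken to be $1_A$, reducing it to the bound $\sup E(X \zeta) < \infty$ applied to the right mixed process. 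This is where fork-convexity (as opposed to mere convexity) is essential: it provides the $\F_0$-measurable, not just deterministic, mixing needed to upgrade $E(X\zeta) \le C$ to the pointwise-in-$A$ supermartingale inequality.

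The main obstacle I anticipate is the integrability bookkeeping around dividing by $Y_0$: a priori $Y_0$ can be small, so the ratios $Y_1/Y_0$ need not be in $L^1$ even though $Y_1 \in L^1$, and one cannot naively apply Lemma \ref{prop: l0-bded iff Z ex} to a family that isn't $L^1$-bounded. The resolution is to work with boundedness in probability throughout — which is insensitive to these issues, since fork-convexity lets one blend toward the constant process $1$ whenever $Y_0$ is near zero — and only invoke the $L^0$-version of Lemma \ref{prop: l0-bded iff Z ex}; the quantitative bound \eqref{eq: bound on one period supermartingale} is then recovered a posteriori by truncating or rescaling $Z$, using that we only need $E(Y_0 Z) \le C$ rather than control of $E(Y_1 Z)$. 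A secondary technical point is making sure $Z$ can be chosen $\F_0$-measurable and strictly positive simultaneously, which is immediate from taking a conditional expectation of the strictly positive $\zeta$, but one should note that strict positivity of $E(\zeta \mid \F_0)$ requires no completeness of $\F_0$ and holds for any genuine (not merely a.s.\ defined) version.
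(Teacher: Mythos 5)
There is a genuine gap in the plan. You propose to apply Lemma \ref{prop: l0-bded iff Z ex} (Yan's theorem) to the family of ratios $\X = \{Y_1/Y_0 : Y \in \Y\}$, obtain a strictly positive $\zeta$ with $\sup_{X \in \X} E(X\zeta) < \infty$, and then set $Z = E(\zeta \mid \F_0)$ (suitably normalized). But this cannot give the supermartingale property. The supermartingale property requires, for every $Y \in \Y$ and every $A \in \F_0$,
\begin{align*}
   E(1_A Y_1) \le E(1_A Y_0 Z) = E(1_A Y_0 \zeta),
\end{align*}
and since $Y_1 = (Y_1/Y_0) Y_0$, this is (after replacing $Y$ by the normalized $\tilde{Y}=(1,Y_1/Y_0)\in\Y$, which is precisely how one should exploit fork-convexity here) the statement $E(1_A \tilde{Y}_1) \le E(1_A \zeta)$ for \emph{every} $A \in \F_0$, i.e.\ $E(\tilde{Y}_1 \mid \F_0) \le E(\zeta\mid\F_0)$ almost surely. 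Yan's theorem only delivers the single unconditional bound $E(\tilde{Y}_1 \zeta) \le C$; there is no reason whatsoever that the $\zeta$ it produces dominates all the conditional expectations $E(\tilde{Y}_1\mid\F_0)$ pointwise. The passage from a uniform scalar bound to the full system of inequalities indexed by $A\in\F_0$ is exactly the missing step, and fork-convexity alone will not rescue the $\zeta$ from Yan — you have simply picked a $\zeta$ that is not built to satisfy the pointwise-in-$A$ inequality, and the paragraph where you say the construction should ``balance'' is where the argument would fall apart if you tried to write it out.

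The paper's proof instead constructs $Z$ so that the conditional inequality is automatic: it defines the set function $\mu(A) = \sup_{Y \in \Y} E(1_A Y_1/Y_0)$ on $\F_0$, uses fork-convexity (at $s=0$ with $Y^1 = (1,Y^*_1)$ and weight $\lambda_s = 1_A$) to show that $\mu$ is countably additive, checks $\mu \ll P$, and takes $Z = d\mu/dP$. By construction $E(1_A Z) = \mu(A) \ge E(1_A Y_1/Y_0)$ for every $A\in\F_0$ and $Y\in\Y$, which is precisely the family of inequalities you need. Note in passing that the integrability worry you flag about dividing by $Y_0$ is resolved cheaply: fork-convexity (taking $Y^1=(1,Y^*_1)$, $Y^2=Y$, $\lambda_s=1$) shows that $(1, Y_1/Y_0)\in\Y$, so the family of ratios is itself $L^1$-bounded with the same bound as $\Y$ — no detour through boundedness in probability is required here. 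Lemma \ref{prop: l0-bded iff Z ex} does get used, but only later (in Lemma \ref{lem: Q indexed supermartingale}), not in this one-period step. If you want to repair your argument, you would essentially be forced back to the measure $\mu$ and its Radon--Nikodym derivative; the ``sup over $\Y$ inside the expectation'' structure is the whole point.
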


\begin{proof}
   We define a nonnegative set function $\mu$ on $\F_0$ by setting
      $\mu(A) := \sup_{Y \in \Y} E( 1_A Y_1/Y_0)$.   
   Let us apply the fork-convexity of $\Y$ to show that for every $Y \in \Y$ there exists $\tilde{Y} \in \Y$, such that $Y_1/Y_0 = \tilde{Y}_1$. We take $s=0$ and $Y^1 = (1, Y^*_1)$ and $Y^2 = Y$ and $\lambda_s = 1$ in \eqref{eq:fork-convex}. Then $\tilde{Y} \in \Y$, where
   \begin{align*}
      \tilde{Y}_\cdot = 1_{\{0\}}(\cdot) + 1_{\{1\}}(\cdot) \frac{Y_1}{Y_0}.
   \end{align*}
   In particular we can use the $L^1$-boundedness of $\Y$ to obtain
   \begin{align*}
      \mu(A) = \sup_{Y \in \Y} E\left( 1_A \frac{Y_1}{Y_0}\right) \le \sup_{\tilde{Y} \in \Y} E(1_A \tilde{Y}_1) < \infty
   \end{align*}
   for all $A$, i.e. we obtain that $\mu$ is finite. In fact $\mu$ is a finite measure. Let $A,B \in \F_0$ be two disjoint sets and let $Y^A, Y^B \in \Y$. We take $s=0$, $Y^1 = (1, Y^*_1)$, $Y^2 = Y^A$, $Y^3 = Y^B$, and $\lambda_s = 1_A$ in \eqref{eq:fork-convex}, which implies $\tilde{Y} \in \Y$, where
   \begin{align*}
      \tilde{Y} =  1_{\{0\}}(\cdot) + 1_{\{1\}}(\cdot) \left(1_A \frac{Y^A_1}{Y^A_0} + 1_{A^c} \frac{Y^B_1}{Y^B_0}\right).
   \end{align*}
   Because $A$ and $B$ are disjoint, this $\tilde{Y}$ satisfies
   \begin{align*}
      1_{A \cup B} \frac{\tilde{Y}_1}{\tilde{Y}_0} = 1_A \frac{Y^A_1}{Y^A_0} + 1_{B} \frac{Y^B_1}{Y^B_0}.
   \end{align*}
   As a consequence we obtain
   \begin{align*}
      \mu(A) + \mu(B) & = \sup_{(Y^A, Y^B) \in \Y^2} E\left(1_A \frac{Y^A_1}{Y^A_0} + 1_B \frac{Y^B_1}{Y^B_0}\right) \le  \sup_{\tilde{Y} \in \Y} E\left(1_{A \cup B} \frac{\tilde{Y}_1}{\tilde{Y}_0}\right) = \mu(A \cup B).
   \end{align*}
   But $\mu(A \cup B) \le \mu(A) + \mu(B)$ is obvious, and therefore $\mu$ is finitely additive.
   
   Now let $(A_n)$ be a sequence of disjoint sets in $\F_0$. Then
   \begin{align*}
      \mu(\cup_{n=1}^\infty A_n) & =  \sup_{Y \in \Y} \sum_{n=1}^\infty E\left( 1_{A_n} \frac{Y_1}{Y_0}\right) \le  \sum_{n=1}^\infty \sup_{Y^n \in \Y} E\left( 1_{A_n} \frac{Y^n_1}{Y^n_0}\right) = \sum_{n=1}^\infty \mu(A_n).
   \end{align*}
   The opposite inequality is easily seen to be true for any finitely additive nonnegative set function. Thus $\mu$ is a finite measure on $\F_0$, which is absolutely continuous with respect to $P$. Therefore there exists a nonnegative $Z \in L^1(\F_0, P)$, such that
   \begin{align}\label{eq:radon proof1}
      \mu(A) = E(1_A Z) = \sup_{Y \in \Y} E\left( 1_A \frac{Y_1}{Y_0} \right).
   \end{align}
   It is easy to see that we can replace $1_A$ in \eqref{eq:radon proof1} by any nonnegative $\F_0$-measurable random variable. In particular for any $Y \in \Y$ and $A \in \F_0$
   \begin{align*}
      E(1_A Y_0 Z) = \sup_{\tilde{Y} \in \Y} E\left(1_A Y_0 \frac{\tilde{Y}_1}{\tilde{Y}_0}\right) \ge E\left(1_A Y_0 \frac{Y_1}{Y_0}\right) = E(1_A Y_1),
   \end{align*}
   proving that $(Y_0 Z, Y_1)$ is a supermartingale as long as $E(Y_0 Z) < \infty$. But the bound stated in \eqref{eq: bound on one period supermartingale} follows immediately from the fork-convexity of $\Y$, because the process $\bar{Y} = (Y_0,Y_0 \tilde{Y}_1/\tilde{Y}_0)$ is in $\Y$ for any $\tilde{Y} \in \Y$, and thus
   \begin{align*}
      E(Y_0 Z) = \sup_{\tilde{Y} \in \Y} E\left(Y_0 \frac{\tilde{Y}_1}{\tilde{Y}_0}\right) \le \sup_{\bar{Y} \in \Y} E\left(\bar{Y}_1\right).
   \end{align*}
   It remains to show that $Z$ is strictly positive. But this is easy, because $(1,Y^*_1)$ is in $\Y$, and $Y^*_1$ is strictly positive. Therefore $(Z,Y^*_1)$ is a supermartingale with strictly positive terminal value, which is only positive if also $Z$ is strictly positive.
\end{proof}

\begin{rmk}
   Some sort of stability assumption is necessary for Lemma \ref{lem: one period supermartingale} to hold. Even for a uniformly integrable and convex family of processes $\Y$, the proposition may fail without assuming fork convexity or a similar stability property. Let again $\{A^n_k: 1 \le k \le 2^n, n \in \N\}$ be an increasing sequence of partitions of $\Omega$, such that for every $n$ and $k$ we have $P(A^n_k) = 2^{-n}$. Define the random variables $Y^n_k = 1_{A^n_k} 2^n/n$. Let $M > 1$ be such that $2^{n_0-1} < M \le 2^{n_0}$. Then
   \begin{align*}
      \sup_{n,k} E(|Y^n_k| 1_{\{|Y^n_k| \ge M\}}) \le E(|Y^{n_0}_1|) = \frac{1}{n_0},
   \end{align*}
   proving that $(Y^n_k)_{n,k}$ is uniformly integrable. From de la Vall\'{e}e-Poussin's theorem and Jensen's inequality we obtain that also the convex hull $\mathcal{C}$ of the $Y^n_k$ is uniformly integrable. Define $\F_0 = \F_1 = \sigma(A^n_k: 1 \le k \le 2^n, n \in \N)$, and $\Y = \{(1,Y): Y \in \mathcal{C}\}$. Assume there exists $Z > 0$ such that $E(1_A Y) \le E(1_A Z)$ for all $A \in \F_0$ and $Y \in \mathcal{C}$. Then for every $n \in \N$
   \begin{align*}
      E(Z) = \sum_{k=1}^{2^n} E(1_{A^n_k} Z) \ge \sum_{k=1}^{2^n} E(1_{A^n_k} Y^n_k) = \frac{2^n}{n}.
   \end{align*}
   so that $E(Z) = \infty$. Therefore $(Z, Y)$ cannot be a supermartingale for any $Y \in \mathcal{C}$. In fact it is possible to show that $E(1_{A^n_k} Z) = \infty$ for all $k,n$, and since $\F_0$ is generated by $(A^n_k)_{n,k}$, this implies $P(Z = \infty) = 1$.
\end{rmk}

\begin{cor}\label{cor: discrete time supermartingale}
   Let $\Y$ be a $L^1$-bounded family of nonnegative processes indexed by $\{0, \dots, n\}$, adapted to a filtration $(\F_k: 0 \le k \le n)$. Assume that $\Y$ is fork-convex, and that it contains the constant process $(1, \dots, 1)$.
   
   Then there exists a strictly positive and adapted process $(Z_k: 0 \le k \le n)$, with $Z_n=1$ and such that $ZY$ is a supermartingale for every $Y \in \Y$. $Z$ can be chosen such that that every $Y \in \Y$
   \begin{align}\label{eq: bound on multiperiod supermartingale}
      \max_{k=0\dots, n} E(Z_k Y_k) \le \sup_{Y\in \Y} \max_{k = 0,\dots, n} E(Y_k).
   \end{align}
\end{cor}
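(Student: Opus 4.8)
The plan is to argue by induction on $n$. The base case $n=1$ is precisely Lemma~\ref{lem: one period supermartingale}: since $\Y$ contains the constant process $(1,1)$, it contains a process $(1,Y^*_1)$ with $Y^*_1>0$ as required there, and the lemma produces a strictly positive $\F_0$-measurable $Z_0$ with $(Z_0Y_0,Y_1)$ a supermartingale and $E(Z_0Y_0)\le\sup_{Y\in\Y}\max_{i=0,1}E(Y_i)$; setting $Z_1:=1$ and noting $E(Z_1Y_1)=E(Y_1)\le\sup_{Y\in\Y}\max_{i}E(Y_i)$ gives \eqref{eq: bound on multiperiod supermartingale} for $n=1$.

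For the inductive step, assume the assertion for families indexed by $\{0,\dots,n-1\}$, and let $\Y$ be as in the statement with index set $\{0,\dots,n\}$. First I would strip off the $0$th coordinate: let $\Y_1:=\{(Y_1,\dots,Y_n):Y\in\Y\}$, regarded (after the obvious shift of indices) as a family of processes on $\{0,\dots,n-1\}$ adapted to $(\F_1,\dots,\F_n)$. A direct check using \eqref{eq:fork-convex} shows that $\Y_1$ is fork-convex: any fork-combination of restrictions $(Y^i_1,\dots,Y^i_n)$ is the restriction to $\{1,\dots,n\}$ of the corresponding fork-combination of $Y^1,Y^2,Y^3$ in $\Y$, which lies in $\Y$. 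Since $\Y_1$ is evidently $L^1$-bounded and contains the constant process, the induction hypothesis yields a strictly positive adapted process $(Z_1,\dots,Z_n)$ with $Z_n=1$ such that $(Z_1Y_1,\dots,Z_nY_n)$ is a supermartingale for every $Y\in\Y$, with $\max_{1\le k\le n}E(Z_kY_k)\le\sup_{Y\in\Y}\max_{1\le k\le n}E(Y_k)$.

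It remains to construct $Z_0$. The key point is that the density relating $\F_0$ and $\F_1$ must ``see'' the $\F_1$-measurable factor $Z_1$, so I would apply Lemma~\ref{lem: one period supermartingale} not to $\{(Y_0,Y_1)\}$ but to the twisted family $\Y_0:=\{(Y_0,Z_1Y_1):Y\in\Y\}$ on $\{0,1\}$, adapted to $(\F_0,\F_1)$. Using fork-convexity of $\Y$ at $s=0$ one checks that $\Y_0$ is fork-convex; it contains $(1,Z_1)$ with $Z_1>0$; and it is $L^1$-bounded because $E(Z_1Y_1)\le\max_{1\le k\le n}E(Z_kY_k)<\infty$ by the supermartingale property and the bound just obtained. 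The one-period lemma then provides a strictly positive $\F_0$-measurable $Z_0$ with $(Z_0Y_0,Z_1Y_1)$ a supermartingale for every $Y\in\Y$ and $E(Z_0Y_0)\le\sup_{Y\in\Y}\max\{E(Y_0),E(Z_1Y_1)\}$. Pasting the pieces together, $(Z_0,Z_1,\dots,Z_n)$ is strictly positive, adapted, satisfies $Z_n=1$, and $(Z_0Y_0,Z_1Y_1,\dots,Z_nY_n)$ is a supermartingale for every $Y\in\Y$ — the conditional inequality at time $0$ coming from $\Y_0$ and those at times $1,\dots,n-1$ from $\Y_1$. Since a supermartingale has non-increasing expectations, $\max_{0\le k\le n}E(Z_kY_k)=E(Z_0Y_0)$, and combining the two displayed bounds with $E(Z_1Y_1)=\max_{1\le k\le n}E(Z_kY_k)\le\sup_{Y\in\Y}\max_{1\le k\le n}E(Y_k)$ yields \eqref{eq: bound on multiperiod supermartingale}, completing the induction.

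The main obstacle is not a single deep estimate but the careful verification that the two auxiliary families inherit fork-convexity: in each case one must exhibit a given fork-combination in $\Y_1$ or $\Y_0$ as (the relevant coordinates of) a genuine fork-combination in $\Y$, which is exactly where the precise algebraic form of \eqref{eq:fork-convex} and the convention $0/0=0$ are used; in particular for $\Y_0$ one reuses the $s=0$ manipulation from the proof of Lemma~\ref{lem: one period supermartingale}. The remaining work is the bookkeeping needed to propagate the quantitative $L^1$-bound through both reductions.
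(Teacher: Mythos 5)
Your proof is correct and follows essentially the same route as the paper's: induction on $n$, with the base case being Lemma~\ref{lem: one period supermartingale}, and the inductive step first applying the induction hypothesis to the shifted family $\{(Y_1,\dots,Y_n)\}$ and then recovering $Z_0$ by applying the one-period lemma to the twisted family $\{(Y_0,Z_1Y_1):Y\in\Y\}$. The only difference is cosmetic (you spell out the verification of fork-convexity for the two auxiliary families and the bookkeeping of the $L^1$ bound, which the paper leaves as ``straightforward'').
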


\begin{proof}
   We prove the result by induction. For $n=1$, this is just Lemma \ref{lem: one period supermartingale}. Take $Z_0 = Z, Z_1 = 1$.

   Now assume the result holds for $n$. Let $\Y$ be a family of processes indexed by $\{0,\dots, n+1\}$, and assume that $\Y$ satisfies all requirements stated above. Then also $(Y_1, \dots, Y_{n+1})$ satisfies all those requirements. By induction hypothesis, there exists a strictly positive and adapted process $Z = (Z_1, \dots, Z_{n},1)$ such that $ZY$ is a supermartingale for all $Y\in \Y$, and such that
   \begin{align}\label{eq:multiperiod proof1}
      \max_{k=1\dots, n+1} E(Z_k Y_k) \le \sup_{Y\in \Y} \max_{k = 1,\dots, n+1} E(Y_k).
   \end{align}
   Therefore it suffices to construct a suitable $Z_0$. This is achieved by applying Lemma \ref{lem: one period supermartingale} to the family of processes $\tilde{\Y}=\{(Y_0, Z_1 Y_1): Y \in\Y\}$. Since $\Y$ contains the constant process $(1,\dots,1)$, $\tilde{\Y}$ contains the process $(1, Z_1)$, and $Z_1$ is strictly positive. Furthermore it is straightforward to check that $\tilde{Y}$ is fork-convex. $L^1$-boundedness of $\tilde{Y}$ follows from \eqref{eq:multiperiod proof1}. Hence we can apply Lemma \ref{lem: one period supermartingale} to $\tilde{Y}$, and the result follows.
\end{proof}

To prove Theorem \ref{thm:abstract ex supermartingale density}, we have to go from finite discrete time to continuous time. This is achieved by means of a compactness argument. Compactness for right-continuous functions is not very easy to show, and it would require us to use some form of the Arzel\`a-Ascoli theorem. However we want to construct a \emph{supermartingale} $Z$, and therefore it will be sufficient to construct its ``skeleton'' $(Z_q: q \in \Q_+)$. Using standard results for supermartingales, we can then use this skeleton to construct a right-continuous supermartingale density.

%

We will need the notion of convex compactness as introduced by Zitkovic \cite{Zitkovic2010}.
\begin{defn}
   Let $X$ be a topological vector space. A closed convex subset $C \subseteq X$ is called \emph{convexly compact} if for any family $\{F_\alpha: \alpha \in A\}$ of closed convex subsets of $C$, we can only have $\cap_{\alpha \in A} F_\alpha = \emptyset$ if there exist already finitely many $\alpha_1,\dots, \alpha_n \in A$ for which $\cap_{i=1}^n F_{\alpha_i} = \emptyset$.
\end{defn}

Recall that $L^0$ is the space of real valued random variables, equipped with the topology of convergence in probability. Zitkovic \cite{Zitkovic2010} then characterizes convexly compact sets of nonnegative elements of $L^0$.

\begin{lem}[Theorem 3.1 of \cite{Zitkovic2010}]\label{prop: zitkovic}
   Let $\X$ be a convex set of nonnegative random variables, closed with respect to convergence in probability. Then $\X$ is convexly compact in $L^0$ if and only if it is bounded in probability.
\end{lem}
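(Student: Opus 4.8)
The plan is to prove the two implications separately: that convex compactness forces boundedness in probability is elementary, whereas the converse is the substantial statement (it is Theorem~3.1 of \cite{Zitkovic2010}).

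For the easy implication we argue by contraposition. If $\X$ is not bounded in probability, we may fix $\delta > 0$ and, for each $n \in \N$, an element $X_n \in \X$ with $P(X_n \ge n) \ge \delta$, and set $F_n = \overline{\mathrm{conv}}\{X_m : m \ge n\}$, where the closure is taken for convergence in probability. Each $F_n$ is a nonempty, closed, convex subset of $\X$ (here we use that $\X$ is convex and closed), and the sequence $(F_n)$ is decreasing and therefore has the finite intersection property. By concavity of $y \mapsto n \wedge y$, any finite convex combination $W = \sum_i \lambda_i X_{m_i}$ with all $m_i \ge n$ satisfies $E[n \wedge W] \ge \sum_i \lambda_i E[n \wedge X_{m_i}] \ge \sum_i \lambda_i\, n P(X_{m_i} \ge m_i) \ge n\delta$; since $W \mapsto E[n \wedge W]$ is continuous for convergence in probability (dominated convergence, the integrand being bounded by $n$), the bound $E[n \wedge W] \ge n\delta$ extends to all $W \in F_n$. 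Rewriting it as $E[1 \wedge (W/n)] \ge \delta$ and splitting the expectation over $\{W \ge n\delta/2\}$ yields $P(W \ge n\delta/2) \ge \delta/2$ for every $W \in F_n$. If $\X$ were convexly compact we could choose $X^\ast \in \bigcap_n F_n$, and then $P(X^\ast \ge n\delta/2) \ge \delta/2$ for all $n$, forcing $P(X^\ast = \infty) \ge \delta/2 > 0$ and contradicting $X^\ast \in L^0$.

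For the converse, the crucial input is a Komlós-type lemma: by Lemma~A1.1 of \cite{Delbaen1994} (equivalently, Komlós' theorem), every sequence $(X_n)$ in $\X$ admits forward convex combinations $Y_n \in \mathrm{conv}(X_n, X_{n+1}, \dots)$ converging almost surely to some $[0,\infty]$-valued $Y_\infty$; since $\X$ is bounded in probability, this almost sure limit is finite almost surely, so $Y_\infty \in L^0$, the convergence holds also in probability, and $Y_\infty \in \X$ because $\X$ is convex and closed. Thus $\X$ is, in a sequential sense, convexly compact. To deduce the finite intersection property, we would take a family $\{F_\alpha\}$ of relatively closed convex subsets of $\X$ with the finite intersection property, close it under finite intersections so that it becomes downward directed, and reduce to a decreasing sequence $F_{\alpha_1} \supseteq F_{\alpha_2} \supseteq \cdots$ from the family: for $x_n \in F_{\alpha_n}$ the forward convex combinations of $(x_n)$ lie eventually in each closed convex set $F_{\alpha_N}$, so their limit lies in $\bigcap_N F_{\alpha_N}$, and for a suitably chosen sequence $(\alpha_n)$ this limit lies in every $F_\alpha$.

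I expect the genuine difficulty to be exactly this last reduction from an arbitrary, possibly uncountable, downward-directed family to a sequence: $L^0$ need not be separable and such families need not admit cofinal sequences, so the choice of $(\alpha_n)$ requires care. This is the technical heart of \cite{Zitkovic2010}, which we would follow; the remaining ingredients (continuity of the truncation functionals, the Komlós lemma, and the verification that the limiting element lies in each $F_\alpha$) are classical or routine.
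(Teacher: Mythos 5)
The paper does not actually prove this lemma: it is quoted verbatim as Theorem~3.1 of \cite{Zitkovic2010}, and the only thing the authors add is a short remark verifying that Zitkovic's proof does not need a complete probability space --- they single out the application of Lemma~A1.2 of \cite{Delbaen1994} as the one step where completeness might be an issue and observe that that lemma is stated for general probability spaces. So there is no proof in the paper against which to compare yours line by line.

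Your proof of the ``only if'' direction (convex compactness implies boundedness in probability) is correct and self-contained. The concavity of $y \mapsto n \wedge y$ gives the lower bound on finite convex combinations, continuity of $W \mapsto E[n \wedge W]$ for convergence in probability (by dominated convergence) passes the bound to the closed convex hull, and the truncation splitting over $\{W \ge n\delta/2\}$ extracts the uniform estimate $P(W \ge n\delta/2) \ge \delta/2$; the rest is the standard contradiction since $\{X^\ast \ge n\delta/2\}$ decreases to $\{X^\ast = \infty\}$. This is clean and uses only what the statement assumes.

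For the ``if'' direction you correctly identify the Komlós-type forward-convex-combinations lemma (Lemma~A1.1 of \cite{Delbaen1994}; Zitkovic also uses the companion Lemma~A1.2, which is the one the paper's remark flags) as giving \emph{sequential} convex compactness, and you correctly flag that the passage from sequences to arbitrary downward-directed families is the real difficulty: $L^0$ need not be separable, the family need not admit a countable cofinal subfamily, and so the phrase ``for a suitably chosen sequence $(\alpha_n)$ this limit lies in every $F_\alpha$'' hides exactly the non-trivial work. As written, that step would not go through; this is precisely the content of Zitkovic's proof (which, roughly, uses the auxiliary equivalent measure $Q$ under which $\X$ is $L^1(Q)$-bounded and a variational selection of minimizers, rather than a naive cofinality argument). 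Since you explicitly defer this to \cite{Zitkovic2010} rather than claiming it, your proposal is honest but incomplete in the same place the paper is: both ultimately rest on Zitkovic's theorem. Your attempt goes further than the paper by supplying a full proof of the elementary half.
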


Note that Zitkovic works on a complete probability space. But completeness is not used in the proof of Theorem 3.1. There is only one point in the proof where it is not immediately clear whether completeness of the $\sigma$-algebra is needed: when Lemma A1.2 of \cite{Delbaen1994} is applied. However this lemma is formulated for general probability spaces.

In Proposition \ref{prop: tychonoff result} we prove a Tychonoff theorem for countable families of convexly compact subsets of metric spaces. This will be used in the following proof.

\begin{lem}\label{lem: Q indexed supermartingale}
   Let $\Y$ and $\K$ be as in Theorem \ref{thm:abstract ex supermartingale density}. Then there exists a nonnegative supermartingale $(\bar{Z}_q)_{q \in \Q_+ \cup\{\infty\}}$ with $\bar{Z}_\infty > 0$, such that $(\bar{Z}_q Y_q)_{q \in \Q_+}$ is a supermartingale for all $Y \in \X$.
\end{lem}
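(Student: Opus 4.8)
The strategy is to apply the discrete‑time statement, Corollary~\ref{cor: discrete time supermartingale}, along finite grids of rationals that always contain the terminal time $\infty$, and then to pass from finitely many times to all of $\Q_+$ by a compactness argument in $L^0$. First I would reduce to a situation in which the discrete result applies and in which the terminal value is controlled. Since $\Y$ is fork‑convex with $Y_0=1$ it is convex, hence $\K$ is convex and, by hypothesis, bounded in probability, so Lemma~\ref{prop: l0-bded iff Z ex} furnishes a strictly positive $Z^*$ with $\sup_{X\in\K}E(XZ^*)<\infty$; note $Z^*\in L^1$ because $1\in\K$. Let $Q\sim P$ be defined by $dQ/dP=Z^*/E(Z^*)$ and write $D_q=dP/dQ|_{\F_q}$ for its density process (strictly positive, a.s.\ finite), with $D_\infty=dP/dQ$. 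Because $\Y$ contains the constant process $1$, it is stable under stopping at deterministic times, so for every $Y\in\Y$ and $q\in\Q_+$ the stopped process $Y_{\cdot\wedge q}$ lies in $\Y$ and converges, whence $Y_q\in\K$. Consequently the subfamily $\Y^c:=\{Y\in\Y:\lim_{t}Y_t\text{ exists}\}$ still contains $1$, is fork‑convex, has $\{Y_\infty:Y\in\Y^c\}=\K$, and all its marginals $Y_q$, $q\in\Q_+\cup\{\infty\}$, lie in $\K$; in particular $\Y^c$ is $L^1(Q)$‑bounded by $C:=\sup_{X\in\K}E_Q(X)<\infty$.

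Next, enumerate $\Q_+\setminus\{0\}$ and for $n\in\N$ let the grid $\tilde D_n=\{0=s^n_0<\dots<s^n_{k_n}<\infty\}$ consist of $0$, the first $n$ of these rationals, and $\infty$. Corollary~\ref{cor: discrete time supermartingale}, applied under $Q$ to the restricted family $\{(Y_q)_{q\in\tilde D_n}:Y\in\Y^c\}$ (which one checks is fork‑convex, contains the constant process $(1,\dots,1)$, and is $L^1(Q)$‑bounded by $C$, the index set $\tilde D_n$ being merely a relabelling of $\{0,\dots,k_n+1\}$), yields a strictly positive adapted process $(Z^n_q)_{q\in\tilde D_n}$ with $Z^n_\infty=1$ such that $(Z^n_qY_q)_{q\in\tilde D_n}$ is a $Q$‑supermartingale for every $Y\in\Y^c$ and $E_Q(Z^n_qY_q)\le C$ for all $q\in\tilde D_n$. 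Taking $Y\equiv1$ and using the supermartingale property up to $\infty$ gives $Z^n_q\ge E_Q(Z^n_\infty\mid\F_q)=1$, so $1\le Z^n_q$ and $E_Q(Z^n_q)\le C$ for every $q$; extend $Z^n_q:=1$ for $q\in(\Q_+\cup\{\infty\})\setminus\tilde D_n$.

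Now I pass to the limit. For each $q\in\Q_+\cup\{\infty\}$ the sequence $(Z^n_q)_n$ is $L^1(Q)$‑bounded, hence bounded in probability, so the closure in probability $\mathcal C_q$ of $\operatorname{conv}\{Z^n_q:n\in\N\}$ is convex, closed and bounded in probability, hence convexly compact by Lemma~\ref{prop: zitkovic} (and $\mathcal C_\infty=\{1\}$). By Proposition~\ref{prop: tychonoff result} the product $\prod_{q}\mathcal C_q$ is convexly compact, so intersecting over $N$ the nested, nonempty, closed convex subsets $\overline{\operatorname{conv}}\{(Z^m_q)_q:m\ge N\}$ and extracting a sequence diagonally produces convex combinations $\xi^n$ of $\{(Z^m_q)_q:m\ge n\}$ with $\xi^n_q\to\bar Z^Q_q$ $Q$‑a.s.\ for every $q\in\Q_+\cup\{\infty\}$. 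Then $\bar Z^Q_q$ is $\F_q$‑measurable, $\bar Z^Q_q\ge1$, $\bar Z^Q_\infty=1$, and $E_Q(\bar Z^Q_qY_q)\le C$ by Fatou. For $q\le q'$ and all $n$ large enough that $q,q'\in\tilde D_n$, the pair $(\xi^n_qY_q,\xi^n_{q'}Y_{q'})$ is a convex combination of $Q$‑supermartingale pairs, hence itself one, so conditional Fatou gives $E_Q(\bar Z^Q_{q'}Y_{q'}\mid\F_q)\le\liminf_n E_Q(\xi^n_{q'}Y_{q'}\mid\F_q)\le\liminf_n\xi^n_qY_q=\bar Z^Q_qY_q$. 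Thus $(\bar Z^Q_q)_{q\in\Q_+\cup\{\infty\}}$ is a $Q$‑supermartingale with $\bar Z^Q_\infty=1$ and $(\bar Z^Q_qY_q)_{q\in\Q_+}$ a $Q$‑supermartingale for every $Y\in\Y^c$. Finally set $\bar Z_q:=\bar Z^Q_q/D_q$; since $D$ is the density process of $P$ with respect to $Q$, Bayes' rule shows that $\bar Z$ is a nonnegative $P$‑supermartingale on $\Q_+\cup\{\infty\}$, that each $(\bar Z_qY_q)_{q\in\Q_+}$ with $Y\in\Y^c$ is a $P$‑supermartingale, and that $\bar Z_\infty=1/D_\infty=Z^*/E(Z^*)>0$ $P$‑a.s.\ (with $E_P(\bar Z_q)=E_Q(\bar Z^Q_q)\le C$); and for arbitrary $Y\in\Y$ and $q\le q'$ in $\Q_+$ one removes the restriction to $\Y^c$ by noting $Y_{\cdot\wedge q'}\in\Y^c$ and $E_P(\bar Z_{q'}Y_{q'}\mid\F_q)=E_P\big(\bar Z_{q'}(Y_{\cdot\wedge q'})_{q'}\mid\F_q\big)\le\bar Z_q(Y_{\cdot\wedge q'})_q=\bar Z_qY_q$.

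The step I expect to be the main obstacle is the passage from the discrete‑time densities to a single skeleton on all of $\Q_+$: one must extract a limit that is simultaneously good at all rational times and still a supermartingale, without any uniform‑integrability input. The simultaneity is handled by the convex‑compactness/Tychonoff machinery, and the supermartingale property by invoking conditional Fatou (rather than trying to pass $E_Q(\xi^n_{q'}Y_{q'}\mid\F_q)$ to the limit directly, which would require uniform integrability). A secondary point, easily overlooked, is that keeping $\infty$ in every finite grid — so that $Z^n_\infty=1$ and hence $\bar Z^Q_\infty=1$ — is exactly what forces $\bar Z_\infty>0$ in the limit; without it one would only obtain a nonnegative supermartingale skeleton, not a supermartingale \emph{density}.
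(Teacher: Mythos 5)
Your proof is correct and rests on the same pillars as the paper's: Lemma~\ref{prop: l0-bded iff Z ex} to pass to an equivalent measure $Q$ under which $\K$ is $L^1$-bounded, Corollary~\ref{cor: discrete time supermartingale} for the finite-time building blocks, and the convex-compactness machinery (Lemma~\ref{prop: zitkovic} and Proposition~\ref{prop: tychonoff result}) to glue them together along $\Q_+\cup\{\infty\}$. The only substantive difference is the bookkeeping in the compactness step. The paper defines the doubly-indexed family of closed convex sets $\mathcal{C}(q,r)=\{Z\in\mathcal{C}: E_Q(Z_{q+r}Y_{q+r}/Y_q\,|\,\F_q)\le Z_q \text{ for all } Y\}$ and invokes the finite-intersection property directly: if a finite subcollection had empty intersection, one would contradict Corollary~\ref{cor: discrete time supermartingale} on the corresponding finite grid. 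You instead produce an explicit sequence of skeletons $Z^n$ on increasing rational grids (normalized so $Z^n_\infty=1$, whence $Z^n_q\ge1$), then apply the finite-intersection property once to the nested convex hulls $\overline{\operatorname{conv}}\{Z^m:m\ge N\}$, extract a diagonal sequence of convex combinations converging a.s.\ coordinate-wise, and recover the $Q$-supermartingale property by conditional Fatou. Both routes expend the compactness once and rely on the same discrete-time input; yours is slightly more constructive, while the paper's avoids the explicit extraction and Fatou step by phrasing the supermartingale constraints as the closed sets being intersected. Your final change of measure $\bar Z_q=\bar Z^Q_q\cdot(dQ/dP)|_{\F_q}$ is the right one (note the paper's displayed formula writes $dP/dQ$, which appears to be a typo), and your observation that keeping $\infty$ in every grid is what pins $\bar Z_\infty>0$ is exactly the point of normalizing $Z_\infty=1$ in the paper's definition of $\mathcal{C}$.
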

\begin{proof}
   Recall that $\Y$ is convex, and therefore $\K$ is convex as well. 
   By Lemma \ref{prop: l0-bded iff Z ex} there exists $Q \sim P$ such that $\K$ is $L^1(Q)$-bounded. We define $C = \sup_{X \in \K} E_Q(X)$ and introduce the family of processes
   \begin{align*}
      \mathcal{C} = \{(Z_q)_{q \in \Q_+ \cup\{\infty\}}: Z_\infty = 1, Z_q \ge 0, Z_q \in \F_q \text{ and } E_Q(Z_q)\le C \text{ for all } q\}.
   \end{align*}
   By Lemma \ref{prop: zitkovic} and Proposition \ref{prop: tychonoff result}, this is a convexly compact set in $\prod_{q \in \Q_+ \cup \{ \infty \}} L^0(\F_q, Q)$ equipped with the product topology (where every $L^0(\F_q, Q)$ is equipped with its usual topology). Define for given $q,r \in \Q_+ \cup\{\infty\}$
   \begin{align*}
      \mathcal{C}(q,r) = \{Z \in \mathcal{C}: E_Q(Z_{q+r} Y_{q+r}/Y_q|\F_q) \le Z_q \text{ for all } Y \in \Y\},
   \end{align*}
   where for $r=\infty$ it is understood that we only consider those $Y \in \Y$ for which $Y_\infty = \lim_{t\rightarrow \infty} Y_t$ exists. The sets $\mathcal{C}(q,r)$ are convex, and by Fatou's lemma they are also closed. Furthermore they are subsets of the convexly compact set $\mathcal{C}$.  
   So if
   \begin{align*}
      \cap_{q \in \Q_+, r \in \Q_+ \cup \{\infty\}}\mathcal{C}(q,r)
   \end{align*}
   was empty, then already a finite intersection would have to be empty. But if a finite intersection was empty, then there would be some $0 \le t_0< \dots < t_n \le \infty$ for which it is impossible to find $(Z_{t_i}: i=0,\dots, n)$ with $Z_{t_n}=1$ and such that $(Y_{t_i} Z_{t_i}: i = 0,\dots, n)$ is a $Q$-supermartingale for every $Y \in \Y$ (respectively for every $Y \in \Y$ for which $\lim_{t \rightarrow \infty} Y_t$ exists in case $t_n = \infty$). This would contradict Corollary \ref{cor: discrete time supermartingale}.

   So let $Z$ be in the intersection of all $\mathcal{C}(q,r)$ and let $Y \in \Y$. Then for any $q, r \in \Q_+$
   \begin{align*}
      E_Q\left(\left.Z_{q+r} \frac{Y_{q+r}}{Y_q}\right|\F_q\right) \le  Z_q,
   \end{align*}
   which shows that $ZY$ is a $Q$-supermartingale on $\Q_+$. Taking $Y\equiv 1$, we also see that $(Z_q: q \in \Q_+ \cup \{\infty\})$ is a supermartingale. To complete the proof it suffices now to define for $q \in \Q_+ \cup \{\infty\}$
   \begin{align*}
      \bar{Z}_q = Z_q \left.\frac{dP}{dQ}\right|_{\F_q}.
   \end{align*}
\end{proof}

We are now ready to prove Rokhlin's result.

\begin{proof}[Proof of Theorem \ref{thm:abstract ex supermartingale density}]
   It remains to show that given the skeleton $(\bar{Z}_q: q \in \Q_+ \cup \{\infty\})$ we can construct a right-continuous supermartingale density with left limits almost everywhere. This is a standard result on supermartingales. For the reader's convenience and to dispel possible concerns about the non-completeness of our filtration, we give the arguments below.
   
   Since $(\F_t)$ is right-continuous, for every $t\ge 0$ there exists a null set $N_t \in \F_t$, such that for $\omega \in \Omega \backslash N_t$
   \begin{align*}
      \lim_{\substack{r\rightarrow s-\\r \in \Q}} \bar{Z}(\omega)_r \text{ and } \lim_{\substack{r\rightarrow s+\\r \in \Q}} \bar{Z}(\omega)_r
   \end{align*}
   exist for every $s \le t$. See for example Ethier and Kurtz \cite{Ethier1986}, right before Proposition 2.2.9. We define for $t \in [0,\infty)$
   \begin{align*}
      Z_t(\omega) = \begin{cases}
                       \lim_{\substack{s\rightarrow t+\\s \in \Q}} \bar{Z}_s(\omega), & \omega \in \Omega\backslash N_t \\
                       0, & \text{otherwise}.
                    \end{cases}
   \end{align*}
   Then $Z$ is adapted because $(\F_t)$ is right-continuous, and $Z$ is right-continuous by definition. However it may not have left limits everywhere. But outside of the null set $N = \cup_{n \in \N} N_n$ it has left limits at every $t > 0$.
   
   Let us show that $ZY$ is a supermartingale for every $Y \in \Y$. Recall that the processes in $\Y$ are right-continuous. Using Fatou's Lemma in the first step and Corollary 2.2.10 of \cite{Ethier1986} in the second step, we obtain
   \begin{align*}
      E_Q(Z_{t+s} Y_{t+s}|\F_t) &\le \liminf_{\substack{r\rightarrow (t+s)+\\r \in \Q}} E_Q(\bar{Z}_r Y_r |\F_t) = \liminf_{\substack{r\rightarrow (t+s)+\\r \in \Q}} \liminf_{\substack{u\rightarrow t+\\u \in \Q}} E_Q(\bar{Z}_r Y_r |\F_u) \\
      &\le \liminf_{\substack{r\rightarrow (t+s)+\\r \in \Q}} \liminf_{\substack{u\rightarrow t+\\u \in \Q}} \bar{Z}_u Y_u = Z_t Y_t.
   \end{align*}
   The same arguments with $s=\infty$ and $Y=1$ show that if we set $Z_\infty = \bar{Z}_\infty$, then $(Z_t)_{t\in [0,\infty]}$ is a nonnegative supermartingale with strictly positive terminal value. By Theorem I.1.39 of \cite{Jacod2003}, $Z_t$ almost surely converges to a limit $\tilde{Z}_\infty$ as $t\rightarrow \infty$. Theorem VI-6 of \cite{Dellacherie1980} now implies $\tilde{Z}_\infty \ge Z_\infty > 0$. Since \cite{Dellacherie1980} work with complete filtrations, let us provide the argument. Define $M_t = E(Z_\infty |\F_t)$ for $t \in [0,\infty]$. This is a uniformly integrable martingale which almost surely converges to $Z_\infty$ as $t \rightarrow \infty$. By the supermartingale property of $Z$ we have $M_t \le Z_t$ for all $t \ge 0$. Thus
   \begin{align*}
      Z_\infty = \lim_{t\rightarrow \infty} M_t \le \lim_{t \rightarrow \infty} Z_t = \tilde{Z}_\infty.
   \end{align*}
   
   It remains to show necessity. If $Z$ is a supermartingale density for $\Y$, then for any $Y \in \Y$, $Z_t Y_t$ converges as $t\rightarrow \infty$, see Theorem I.1.39 of \cite{Jacod2003}. Since $Z_t$ converges to a strictly positive limit, $Y_t$ must converge as well, and we have $E(Z_\infty Y_\infty) \le E(Z_0 Y_0) = E(Z_0)$. Now Lemma \ref{prop: l0-bded iff Z ex} implies that $\Y$ is bounded in probability.
\end{proof}

\begin{cor}\label{cor:abstract semimartingale}
   If $\Y$ is as in Theorem \ref{thm:abstract ex supermartingale density}, then every $Y \in \Y$ is a semimartingale for which $Y_t$ almost surely converges as $t \rightarrow \infty$.
\end{cor}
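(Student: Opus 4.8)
The plan is to deduce both assertions from a single supermartingale density. Under the standing hypothesis that $\K$ is bounded in probability, Theorem~\ref{thm:abstract ex supermartingale density} provides a supermartingale density $Z$ for $\Y$: an a.s.\ c\`adl\`ag nonnegative supermartingale with $Z_\infty=\lim_{t\to\infty}Z_t>0$ a.s., such that $ZY$ is a supermartingale for every $Y\in\Y$. Fix $Y\in\Y$. The idea is to write $Y=(ZY)\cdot(1/Z)$, show that both factors are semimartingales, and then read off the convergence of $Y$ from that of $ZY$ and $Z$ together with $Z_\infty>0$.

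First I would analyse $ZY$. Since $Z$ is a.s.\ c\`adl\`ag and $Y$ a.s.\ right-continuous, $ZY$ is an a.s.\ right-continuous nonnegative supermartingale. Doob's maximal and upcrossing inequalities applied to the restriction $(Z_qY_q)_{q\in\Q_+}$ — neither of which uses completeness of $(\F_t)$ — show that along the rationals all left and right limits exist and are finite a.s., and together with right-continuity this yields that $ZY$ is a.s.\ c\`adl\`ag. A c\`adl\`ag supermartingale is a semimartingale, so $ZY$ is a semimartingale; and by the supermartingale convergence theorem (Theorem~I.1.39 of~\cite{Jacod2003}) $(ZY)_t$ converges a.s.\ to a finite limit $W\ge 0$ as $t\to\infty$. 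Likewise $Z_t\to Z_\infty$ a.s.

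Next I would show that $Z$ and $Z_-$ are strictly positive. Because $Z$ is a nonnegative supermartingale, it is absorbed at $0$ once $Z$ or its left limit vanishes: this is the minimum principle for nonnegative supermartingales (on $\{Z_{t-}=0\}$ one has $E(Z_t\mid\F_{t-})\le Z_{t-}=0$, and the jump times are exhausted by stopping times, handling predictable and totally inaccessible times separately; strict positivity away from jumps follows by stopping at the debut of $\{Z=0\}$). Since $Z_\infty>0$ a.s., this forces $Z_t>0$ for all $t\ge 0$ and $Z_{t-}>0$ for all $t>0$, a.s. Hence $1/Z$ is a well-defined a.s.\ c\`adl\`ag process with $\inf_{s\le t}(Z_s\wedge Z_{s-})>0$ a.s.\ for every $t$; since $Z$ is a semimartingale, It\^o's formula for the $C^2$ map $x\mapsto 1/x$ on $(0,\infty)$ — equivalently, the fact that the inverse of a strictly positive semimartingale is a semimartingale — shows $1/Z$ is a semimartingale. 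Thus $Y=(ZY)\cdot(1/Z)$ is a product of two semimartingales, hence a semimartingale (in particular a.s.\ c\`adl\`ag), and $Y_t=(ZY)_t/Z_t\to W/Z_\infty$ a.s., a finite limit because $Z_\infty>0$.

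The routine ingredients are the supermartingale convergence theorem and the product and inverse rules for semimartingales. The step that needs genuine care is the strict positivity of $Z$ and $Z_-$, together with the verification that $ZY$, built from a merely right-continuous $Y$, really is a.s.\ c\`adl\`ag and hence a semimartingale — exactly the places where the non-completeness of $(\F_t)$ and the behaviour of supermartingales at inaccessible jump times have to be treated by hand rather than absorbed into the usual conditions.
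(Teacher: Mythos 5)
Your proposal is correct and follows essentially the same route as the paper: write $Y=(1/Z)(ZY)$, show $ZY$ is a c\`adl\`ag supermartingale (hence a semimartingale) and that $Z$, being a nonnegative supermartingale with $Z_\infty>0$, is strictly positive together with its left limits, so $1/Z$ is a semimartingale by It\^o's formula, and read off the a.s.\ convergence from $Y_t=(ZY)_t/Z_t$ with $Z_\infty>0$. The paper states this more tersely (it cites the convergence argument already made in the proof of Theorem~\ref{thm:abstract ex supermartingale density} and asserts strict positivity of $Z$ without elaboration); you have simply supplied the elided details about absorption at zero and the c\`adl\`ag regularity of $ZY$ under a non-complete filtration.
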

\begin{proof}
   Convergence was shown in the proof of Theorem \ref{thm:abstract ex supermartingale density}. The semimartingale property follows from It\^{o}'s formula. Let $Z$ be a supermartingale density for $\Y$. Then $Z$ is strictly positive, and therefore $1/Z$ is a semimartingale, implying that $Y = (1/Z) (ZY)$ is a semimartingale.
\end{proof}
In case $\Y = \W_1$ and under the stronger assumption (NFLVR$_s$), Corollary \ref{cor:abstract semimartingale} was already shown in \cite{Delbaen1994}.

\begin{proof}[Proof of Theorem \ref{thm: ex supermartingale density}]
   It suffices to note that $\W_1$ and $\W_{1,s}$ satisfy the assumptions of Theorem \ref{thm:abstract ex supermartingale density}. This is easy and shown for example in Rokhlin \cite{Rokhlin2010}, in the proof of Theorem 2. Rokhlin only treats the case of $\W_1$ and $\K_1$, but the same arguments also work for $\W_{1,s}$ and $\K_{1,s}$.
\end{proof}

\begin{proof}[Proof of Corollary \ref{cor:NA1 implies semimartingale}]
   Let $S$ have components that are locally bounded from below and assume that $S$ satisfies (NA1$_s$). Recall that local semimartingales are semimartingales, see for example Protter \cite{Protter2004}, Theorem II.6. Protter works with complete filtrations, but it is an immediate consequence of Lemma \ref{lem:complete cadlag} in Appendix \ref{app:complete filtration} that for every $(\F^P_t)$-semimartingale there exists an indistinguishable $(\F_t)$-semimartingale. Let $1 \le i \le d$. Since $S^i$ is locally bounded from below, there exists an increasing sequence of stopping times $(T_n)$ with $\lim_{n \rightarrow \infty} T_n = \infty$, and a sequence of strictly positive numbers $\alpha_n$, such that $(1 + \alpha_n S^i_{t \wedge T_n})_{t\ge0} \in \W_{1,s}$. By Corollary \ref{cor:abstract semimartingale} the stopped process $S^i_{\cdot\wedge T_n}$ is a semimartingale, and therefore $S^i$ is a semimartingale.
   
   It remains to show that in the locally bounded from below case, any supermartingale density for $\W_{1,s}$ is a supermartingale density for $\W_1$. But this is exactly Kardaras and Platen \cite{Kardaras2011a}, Section 2.2. (And it will also follow from our considerations in Section \ref{sec: dominating measures}.)
\end{proof}

Of course we could also assume that every component of $S$ is either locally bounded from below or locally bounded from above, and we would still obtain the semimartingale property of $S$ under (NA1$_s$). But in the totally unbounded case, $S$ is not necessarily a semimartingale. A simple counterexample is given by a one-dimensional L\'{e}vy-process with jumps that are unbounded both from above and from below, to which we add an independent fractional Brownian motion with Hurst index $H \neq 1/2$. Their sum is not a semimartingale. But there are no 1-admissible strategies other than 0, so that $\K_{1,s} = \{1\}$, which is obviously bounded in probability.

\section{Construction of dominating local martingale measures}
\label{sec: dominating measures}

\subsection{The Kunita-Yoeurp problem and F\"ollmer's measure}\label{subsec: Kunita-Yoeurp}
Now let $Z$ be a strictly positive supermartingale with $Z_\infty >0$ and $E_P(Z_0) = 1$. Here we construct a dominating measure associated to $Z$. Naturally it is more delicate to construct a dominating measures than to construct an absolutely continuous measure.

Our aim is to construct a dominating measure $Q$ and a stopping time $T$, such that $(Z,T)$ is the Kunita-Yoeurp decomposition of $Q$ with respect to $P$. We call this the ``Kunita-Yoeurp problem''. Recall that $(Z,T)$ is the Kunita-Yoeurp decomposition of $Q$ with respect to $P$ if
\begin{enumerate}
   \item $P(T = \infty) = 1$,
   \item $Q(\cdot \cap \{T \le t\})$ and $P$ are mutually singular on $\F_t$,
   \item $Q(\cdot \cap \{T > t\})$ is absolutely continuous with respect to $P$ on $\F_t$, and for $A \in \F_t$
      \begin{align}\label{eq: kunita decomposition}
         Q(A \cap \{T>t\}) = E_P(1_A Z_t).
      \end{align}
\end{enumerate}

In this case one can show that for any stopping time $\tau$ and any $A \in \F_\tau$
\begin{align}\label{eq: kunita decomposition stopping time}
   Q(A \cap \{T > \tau\}) = E_P\left(1_{A \cap \{\tau < \infty\}} Z_\tau\right),
\end{align}
see for example \cite{Yoeurp1985}, Proposition 4.

In general it is impossible to construct $Q$ and $T$ without making further assumptions on the underlying filtered probability space. For example the space could be too small. Take an $\Omega$ that consists of one element, and define $\F = \F_t = \{\emptyset, \Omega\}$ for all $t \ge 0$. Then 
\begin{align*}
   Z_t = \frac{1}{2}(1+e^{-t}), \qquad t \ge 0,
\end{align*}
is a continuous and positive supermartingale with $Z_\infty >0$. But there exists only one probability measure on $\Omega$, and therefore any $Q$ would have Kunita-Yoeurp decomposition $(1,\infty)$ with respect to $P$, and not $(Z,T)$. This is reminiscent of the Dambis Dubins-Schwarz Theorem without the assumption $\langle M \rangle_\infty = \infty$ (see Revuz and Yor \cite{Revuz1999}, Theorem V.1.7). This problem can be solved by enlarging $\Omega$.

But even if the space is large enough, it might still not be possible to find $Q$ and $T$, because the filtration might be too large. Assume that the filtration $(\F_t)$ is complete with respect to $P$ and that $E_P(Z_0) = 1$. Then \eqref{eq: kunita decomposition} implies that $Q$ is absolutely continuous with respect to $P$ on $\F_0$. Since $\F_0$ contains all $P$-null sets, this means that $Q$ is absolutely continuous with respect to $P$, and therefore by \eqref{eq: kunita decomposition} again $Z_t = E_P(dQ/dP|\F_t)$. That is, $Z$ has to be a uniformly integrable martingale under $P$. So if $Z$ is a supermartingale, then the filtration $(\F_t)$ should not be completed. This problem can be avoided by assuming that $(\F_t)$ is the right-continuous modification of a standard system, to be defined below.

If we are allowed to enlarge $\Omega$ and if $(\F_t)$ is the right-continuous modification of a standard system, then the problem of constructing $Q$ and $T$ has been solved by Yoeurp \cite{Yoeurp1985} with the help of F\"ollmer's measure. Let us describe Yoeurp's solution.

First we remove the second problem in constructing $Q$ and $T$ by assuming that the filtration $(\F_t)$ is the right-continuous modification of a standard system $(\F^0_t)$. A filtration $(\F^0_t)$ is called \emph{standard system} if
\begin{enumerate}
   \item for all $t \ge 0$, the $\sigma$-algebra $\F^0_t$ is $\sigma$-isomorphic to the Borel $\sigma$-algebra of a Polish space; that is, there exists a Polish space $(\X_t, \B_t)$, and a bijective map $\pi: \F^0_t \rightarrow \B_t$, such that $\pi^{-1}$ preserves countable set operations;
   \item if $(t_i)_{i \ge 0}$ is an increasing sequence of positive times, and if $(A_i)_{i \ge 0}$ is a decreasing sequence of atoms of $\F^0_{t_i}$, then $\cap_{i \ge 0} A_i \neq \emptyset$.
\end{enumerate}
Then $(\F_t)$ is defined by setting $\F_t = \cap_{s > t} \F^0_{s}$.
Path spaces equipped with the canonical filtration are only standard systems if we allow for ``explosion'' to a cemetery state in finite time, see F\"ollmer \cite{Follmer1972}, Example 6.3, 2), or Meyer \cite{Meyer1972}. 

Next we enlarge $\Omega$ in order to solve the possible problem of $\Omega$ being too small. Define $\overline{\Omega} := \Omega \times [0, \infty]$ and $\overline{\F} = \F \otimes \B[0,\infty]$, where $\B[0,\infty]$ denotes the Borel $\sigma$-algebra of $[0,\infty]$. Also define $\overline{P} = P \otimes \delta_\infty$, where $\delta_\infty$ is the Dirac measure at $\infty$. The filtration $(\overline{\F}_t)$ is defined by
\begin{align*}
   \overline{\F}_t = \cap_{s > t} \F_s \otimes \sigma([0,r]: r \le s).
\end{align*}
Note that if $(\F_t)$ is the right-continuous modification of the standard system $(\F^0_t)$, then $(\overline{\F}_t)$ is the right-continuous modification of the standard system $\overline{\F}^0_0 = \F^0_0 \otimes \sigma(\{0\})$,
\begin{align*}
   \overline{\F}^0_t = \F^0_t \otimes \sigma([0,s]: s \le t), \qquad t > 0.
\end{align*}
Random variables $X$ on $\Omega$ are embedded into $\overline{\Omega}$ by setting $\overline{X}(\omega, \zeta) = X(\omega)$.

Let us remark that $(\overline{\Omega}, \overline{\F}, (\overline{\F}_t), \overline{P})$ is an enlargement of $(\Omega, \F, (\F_t), P)$ in the sense of \cite{Revuz1999}.

\begin{defn}[\cite{Revuz1999}, p. 182] 
   A filtered probability space $(\tilde{\Omega}, \tilde{\F}, (\tilde{\F}_t), \tilde{P})$ is an \emph{enlargement} of $(\Omega, \F, (\F_t), P)$ if there exists a measurable map $\pi: \tilde{\Omega} \rightarrow \Omega$, such that $\pi^{-1}(\F_t) \subseteq \tilde{\F}_t$ and such that $\tilde{P}\circ \pi^{-1} = P$. In this case, random variables are embedded from $(\Omega, \F)$ into $(\tilde{\Omega}, \tilde{\F})$ by setting $\tilde{X}(\tilde{\omega}) = X(\pi(\tilde{\omega}))$.
\end{defn}

Define $\pi(\omega, \zeta) = \omega$. Then we have $\pi^{-1}(A) = A \times [0, \infty] \in \overline{\F_t}$ for every $A \in \F_t$, and therefore $\pi^{-1}(\F_t) \subseteq \overline{\F_t}$. For any set $A \in \F$ we have $\overline{P}\circ \pi^{-1}(A) = P \otimes \delta_\infty (A \times [0, \infty]) = P(A)$. And if $X$ is a random variable on $(\Omega, \F)$, then $\overline{X}(\omega, \zeta) = X(\omega) = X(\pi(\omega, \zeta))$.

Now we can proceed to construct $(\overline{Q},\overline{T})$ on $(\overline{\Omega}, \overline{\F}, (\overline{\F}_t))$. In fact it suffices to construct $\overline{Q}$, because we will take $\overline{T}(\omega, \zeta) = \zeta$, so that $\overline{P}(\overline{T} = \infty) = 1$. However there is one last remaining problem. In general $\overline{Q}$ will not be uniquely determined by $(\overline{Z},\overline{T})$. The measure $\overline{Q}$ must satisfy
\begin{align}\label{eq:martingale iff T infinite}
   1 = \overline{Q}(\overline{\Omega}) = \overline{Q}(\overline{\Omega} \cap\{ t < \overline{T}\}) + \overline{Q}(\overline{\Omega} \cap\{ t \ge \overline{T}\}) = E_{\overline{P}}(\overline{Z}_t) + \overline{Q}( t \ge \overline{T})
\end{align}
for all $t \ge 0$. But $\overline{Q}$ is supposed to solve the Kunita-Yoeurp problem associated with $(\overline{Z},\overline{T})$, and therefore at time $\overline{T}$, the measure $\overline{Q}$ should stop being absolutely continuous with respect to $\overline{P}$, and \eqref{eq:martingale iff T infinite} implies that $\overline{Q}(\overline{T}<\infty) > 0$ if $\overline{Z}$ is not a martingale. So knowing $\overline{P}$, $\overline{Z}$, and $\overline{T}$, in general we can only hope to determine $\overline{Q}$ uniquely on the $\sigma$-field
\begin{align}\label{eq:F T minus predictable}
   \overline{\F}_{\overline{T}-} & = \sigma(\overline{\F}_0,\overline{A}_t \cap \{{\overline{T} > t}\}: \overline{A}_t \in \overline{\F}_t, t > 0)  = \sigma(A_t \times (t, \infty]: A_t \in \F_t, t \ge 0).
\end{align}
For the second equality we refer to \cite{Follmer1972}. Note that \eqref{eq:F T minus predictable} implies that $\overline{\F}_{\overline{T}-}$ is the predictable sigma-algebra on $\Omega \times [0,\infty]$.

In conclusion, in order to construct $T$ and $Q$, we need to assume that $(\F_t)$ is the the right-continuous modification of a standard system, we need to enlarge $(\Omega, \F, (\F_t), P)$ as described above, and we have to accept that $Q$ will only be defined on $\overline{\F}_{\overline{T}-}$. Under these conditions, we can take $\overline{Q}$ as the  F\"ollmer measure of $Z$. Given a nonnegative supermartingale $Z$ with $E_P(Z_0) = 1$, F\"ollmer \cite{Follmer1972} constructs a measure $P^Z$ on $(\overline{\Omega}, \overline{\F}_{T-})$, which satisfies $P^Z(\overline{A}_t \cap \{ \overline{T} > t\}) = E_{\overline{P}}(\overline{Z}_t 1_{\overline{A}_t})$ for all $t \ge 0$ and all $\overline{A}_t \in \overline{\F}_t$. This is exactly the relation \eqref{eq: kunita decomposition}, and therefore $(\overline{Z}, \overline{T})$ is the Kunita-Yoeurp decomposition of $\overline{Q}:=P^Z$ with respect to $\overline{P}$. Note that it is possible to extend $\overline{Q}$ from $\overline{\F}_{\overline{T}-}$ to $\overline{\F}$, generally in a non-unique way. See for example the discussion on p. 9 of \cite{Kardaras2011}. So from now on we make the convention that the measure $\overline{Q}$ is one of these extensions, i.e. $\overline{Q}$ denotes a probability measure on $(\overline{\Omega}, \overline{\F})$ that satisfies $\overline{Q}|_{\overline{\F}_{\overline{T}-}} = P^Z$.

It remains to show that $\overline{Q}$ dominates $\overline{P}$. But this is a general result.

\begin{lem}\label{lem:absolute continuity}
   Let $\mu$ and $\nu$ be two probability measures on a filtered probability space $(\Theta, \G, (\G_t))$. Let $(\zeta, \tau)$ be the Kunita-Yoeurp decomposition of $\nu$ with respect to $\mu$. Define $\G_\infty = \vee_{t\ge 0} \G_t$ and assume that $\mu(\zeta_\infty > 0)=1$. Then $\mu|_{\G_{\infty}} \ll \nu|_{\G_{\infty}}$.
\end{lem}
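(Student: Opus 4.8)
The plan is to exhibit a finite measure $\lambda$ on $\G_\infty$ that is sandwiched between $\nu$ and $\mu$: namely $\lambda \le \nu$ and, under the hypothesis $\mu(\zeta_\infty>0)=1$, $\lambda$ and $\mu$ have the same null sets in $\G_\infty$. Once this is done, $\nu(A)=0$ forces $\lambda(A)=0$ and hence $\mu(A)=0$, which is the assertion. The natural candidate for $\lambda$ is the ``terminal'' version of the density process, $\lambda(A)=E_\mu(1_A\zeta_\infty)$.

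First I would record that property (3) of the Kunita--Yoeurp decomposition says exactly that, for each $t$, $\zeta_t$ is the Radon--Nikodym density of the finite measure $A\mapsto \nu(A\cap\{\tau>t\})$ with respect to $\mu|_{\G_t}$; in particular $\zeta$ is adapted, nonnegative, and $\mu$-integrable. Moreover, for $s\le t$ and $A\in\G_s\subseteq\G_t$ one has $E_\mu(1_A\zeta_t)=\nu(A\cap\{\tau>t\})\le \nu(A\cap\{\tau>s\})=E_\mu(1_A\zeta_s)$, so $\zeta$ is a nonnegative $\mu$-supermartingale. Hence $\zeta_t$ converges $\mu$-a.s. to $\zeta_\infty$ (Theorem I.1.39 of \cite{Jacod2003}), and by Fatou $E_\mu(\zeta_\infty)\le E_\mu(\zeta_0)=\nu(\tau>0)\le 1$, so $\zeta_\infty\in L^1(\mu)$ and may be taken $\G_\infty$-measurable. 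Define $\lambda(A)=E_\mu(1_A\zeta_\infty)$, a finite measure on $\G_\infty$.

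Next I would show $\lambda\le\nu$ on the generating algebra $\bigcup_{t\ge0}\G_t$. For $A\in\G_t$, conditioning on $\G_t$ and using the supermartingale property together with conditional Fatou (so that $E_\mu(\zeta_\infty\mid\G_t)\le\zeta_t$ a.s.) gives
\begin{align*}
   \lambda(A)=E_\mu\big(1_A\,E_\mu(\zeta_\infty\mid\G_t)\big)\le E_\mu(1_A\zeta_t)=\nu(A\cap\{\tau>t\})\le\nu(A).
\end{align*}
To upgrade this to all of $\G_\infty=\sigma\big(\bigcup_t\G_t\big)$ I would argue that the set function $A\mapsto\nu(A)-\lambda(A)$ is nonnegative, finite, and countably additive on the algebra $\bigcup_t\G_t$ (countable additivity on the algebra follows from that of $\nu$ and $\lambda$ once we know $\lambda\le\nu$ there), hence extends to a measure on $\G_\infty$; by uniqueness of the extension this measure coincides with $\nu-\lambda$, so $\lambda\le\nu$ on $\G_\infty$. (Alternatively one invokes the standard total-variation approximation of elements of $\G_\infty$ by elements of $\bigcup_t\G_t$.) Finally, if $A\in\G_\infty$ with $\nu(A)=0$, then $0=\lambda(A)=E_\mu(1_A\zeta_\infty)$ with $\zeta_\infty\ge0$, so $\mu(A\cap\{\zeta_\infty>0\})=0$; since $\mu(\zeta_\infty>0)=1$ this yields $\mu(A)=0$, i.e. $\mu|_{\G_\infty}\ll\nu|_{\G_\infty}$.

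The only point requiring care is the passage from the algebra $\bigcup_t\G_t$ to $\G_\infty$: an inequality ``$\lambda\le\nu$'' is not preserved by the complementation operations underlying a naive Dynkin $\pi$--$\lambda$ argument, so one genuinely needs the countable-additivity (or approximation) argument sketched above. Everything else — the supermartingale property of $\zeta$, its a.s.\ convergence, and the conditional-expectation estimate — is routine.
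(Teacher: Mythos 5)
Your proof is correct and follows essentially the same route as the paper: both establish the key inequality $E_\mu(\zeta_\infty 1_A) \le \nu(A)$ for $A$ in the generating algebra $\bigcup_t \G_t$ (the paper via $\sigma$-continuity of $\nu$ followed by Fatou, you via the supermartingale property and conditional Fatou — an inessential variation), extend it to $\G_\infty$, and then use $\mu(\zeta_\infty>0)=1$ to conclude. One remark on your caveat at the end: the paper's invocation of the monotone class theorem is in fact adequate. While it is true that a Dynkin $\pi$--$\lambda$ argument fails for a one-sided inequality (complements destroy it, as you note), the relevant statement here is the monotone class theorem for \emph{algebras}: the class $\{A \in \G_\infty: E_\mu(\zeta_\infty 1_A) \le \nu(A)\}$ is closed under both increasing unions and decreasing intersections (since $\lambda$ and $\nu$ are finite measures, both continuous from above and below), and it contains the algebra $\bigcup_t\G_t$; hence it contains $\G_\infty$. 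Your approximation/extension argument is a valid alternative, but the paper's terser appeal does go through.
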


\begin{proof}
   Let $A \in \cup_{t\ge0}\G_t$. We use the $\sigma$-continuity of $\nu$, \eqref{eq: kunita decomposition}, and Fatou's lemma, to obtain
   \begin{align*}
      \nu(A\cap\{\tau=\infty\})=\lim_{t\rightarrow\infty} \nu(A\cap\{\tau>t\}) = \lim_{t\rightarrow \infty} E_\mu(\zeta_t 1_{A}) \ge E_\mu(\zeta_\infty 1_{A}).
   \end{align*}
   By the monotone class theorem, this inequality extends to all $A \in \G_\infty$. Since $\mu(\zeta_\infty>0)=1$, we obtain $\mu|_{\G_\infty} \ll \nu|_{\G_\infty}$.
\end{proof}


Lemma \ref{lem:absolute continuity} applied to $\overline{P}$, $\overline{Q}$, and $(\overline{\Omega}, \overline{\F}, (\overline{\F}_t))$ implies that $\overline{Q}\gg \overline{P}$.

In the following we make the standing assumption that we work on a probability space $(\Omega, \F, (\F_t), P)$ where it is possible to solve the Kunita-Yoeurp problem of associating a probability measure and a stopping time to a given supermartingale, and we omit the notation $\overline{(
\cdot)}$. 

\subsubsection{Calculating expectations under $Q$}

Here we present important results of Yoeurp that allow to rewrite certain expectations under $Q$ as expectations under $P$. More precisely, let $Z$ be a nonnegative supermartingale with $E(Z_0)=1$ and with Doob-Meyer decomposition $Z = Z_0 + M - A$, where $M$ is a local martingale starting in zero, and $A$ is an adapted process, a.s. increasing and c\`adl\`ag. Let $T$ and $Q$ be a stopping time and a probability measure, such that $(Z,T)$ is the Kunita-Yoeurp decomposition of $Q$ wrt $P$. 

\begin{lem}\label{lem:yoeurps formula}
   Let $Z = Z_0 + M - A$, and let $T, Q$ be as described above. Let $(\tau_n)$ be a localizing sequence for $M$. Then we have for every bounded predictable process $Y$ and for every $n \in \N$
\begin{align}\label{eq:explicit foellmer}
   E_Q(Y^{\tau_n}_T) = E_P\left(Y_{\tau_n} Z_{\tau_n} + \int_0^{\tau_n} Y_s dA_s\right).
\end{align}
\end{lem}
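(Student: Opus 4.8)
The identity \eqref{eq:explicit foellmer} should be established by the classical monotone class strategy: prove it first for a simple generator of the predictable $\sigma$-algebra, then extend by linearity and monotone limits. Concretely, I would first take $Y$ of the form $Y_s = 1_B 1_{(u,\infty)}(s)$ with $B \in \F_u$; by \eqref{eq:F T minus predictable} these sets $B \times (u,\infty]$ generate the predictable $\sigma$-algebra $\overline{\F}_{\overline{T}-}$, and together with the constants they generate all bounded predictable $Y$. For such a $Y$, the stopped process $Y^{\tau_n}$ equals $1_B 1_{(u,\infty)}(\cdot \wedge \tau_n)$, so $Y^{\tau_n}_T = 1_B 1_{\{u < T \wedge \tau_n\}} = 1_B 1_{\{u<\tau_n\}} 1_{\{T > u\}}$ (using $1_{\{u<\tau_n\}}$ is $\F_u$-measurable, and on $\{u \ge \tau_n\}$ one has $Y^{\tau_n}_T = Y_{\tau_n}$ — I must be slightly careful about the jump at $\tau_n$, but since $Y$ is piecewise constant the bookkeeping is routine). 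Then the left-hand side becomes $Q(B \cap \{u<\tau_n\} \cap \{T>u\})$ plus a correction term on $\{u \ge \tau_n\}$, and I apply the optional form of the Kunita-Yoeurp relation \eqref{eq: kunita decomposition stopping time} with the stopping time $u \wedge \tau_n$ (which is bounded, hence finite) to rewrite this as a $P$-expectation of $Z$ at time $u\wedge\tau_n$.

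**Identifying the finite-variation term.** The genuinely substantive point is that the right-hand side of \eqref{eq:explicit foellmer} produces the integral $\int_0^{\tau_n} Y_s\, dA_s$, i.e. the compensator $A$ from the Doob–Meyer decomposition of $Z$ appears. For the generator $Y_s = 1_B 1_{(u,\infty)}(s)$ I expect the computation to reduce to showing
\begin{align*}
   E_Q\bigl(1_B 1_{\{T > u\}}\bigr)\big|_{\text{stopped at }\tau_n} = E_P\bigl(1_B Z_{u \wedge \tau_n}\bigr) \quad\text{vs.}\quad E_P\bigl(1_B Z_{\tau_n} + 1_B (A_{\tau_n} - A_{u\wedge \tau_n})\bigr),
\end{align*}
so the claim is equivalent to $E_P(1_B Z_{u\wedge\tau_n}) = E_P(1_B(Z_{\tau_n} + A_{\tau_n} - A_{u\wedge\tau_n}))$, which is precisely the statement that $Z_{\cdot\wedge\tau_n} + A_{\cdot\wedge\tau_n} = Z_0 + M_{\cdot\wedge\tau_n}$ is a $P$-martingale — true by the choice of $(\tau_n)$ as a localizing sequence for $M$, since then $M^{\tau_n}$ is a uniformly integrable martingale. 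So the heart of the argument is just the martingale property of $Z+A$ localized along $\tau_n$, combined with the Kunita–Yoeurp relation.

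**Extension and the main obstacle.** Once the identity holds for the generators, linearity gives it for finite linear combinations (i.e. for $Y$ of the form $\sum_j c_j 1_{B_j} 1_{(u_j,\infty)}$ plus constants), and then a monotone-class / monotone-convergence argument extends it to all bounded predictable $Y$: both sides are linear in $Y$, and for $0 \le Y^{(k)} \uparrow Y$ bounded, the left side converges by dominated convergence under $Q$ (everything is bounded by $\|Y\|_\infty$), while on the right $Y_{\tau_n} Z_{\tau_n}$ converges by dominated convergence under $P$ ($Z_{\tau_n} \in L^1$ since $Z$ is a nonnegative supermartingale and $\tau_n$ bounded, wait — $\tau_n$ need not be bounded; but $Z_{\tau_n}$ is still integrable by optional stopping for nonnegative supermartingales, $E_P(Z_{\tau_n}) \le E_P(Z_0) = 1$) and $\int_0^{\tau_n} Y_s^{(k)}\,dA_s \uparrow \int_0^{\tau_n} Y_s\,dA_s$ by monotone convergence (here I need $E_P(A_{\tau_n}) < \infty$, which again follows from $M^{\tau_n}$ being a true martingale: $E_P(A_{\tau_n}) = E_P(Z_0) - E_P(Z_{\tau_n}) + E_P(M_{\tau_n}) \le 1$). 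The main obstacle I anticipate is \emph{not} conceptual but the careful handling of the stopped process $Y^{\tau_n}$ evaluated at the random time $T$ — in particular making sure the jump terms of $A$ and the behaviour on $\{\tau_n \le u\}$ versus $\{\tau_n > u\}$ are accounted for correctly, and confirming that the relevant random variables lie in $\overline{\F}_{\overline{T}-}$ so that $Q$ is actually determined on them (this is why the localization by $\tau_n$ and the restriction to predictable $Y$ are essential). A secondary point to check is integrability of all terms under $Q$, which follows since $Y$ is bounded.
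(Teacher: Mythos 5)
Your proposal is correct and follows essentially the same route as the paper's proof: verify \eqref{eq:explicit foellmer} on generators $Y = X\,1_{(u,\infty]}$ via the Kunita--Yoeurp relation and the fact that $Z_0 + M^{\tau_n} = (Z+A)^{\tau_n}$ is a uniformly integrable $P$-martingale, then extend by the monotone class theorem. (Two small remarks: the paper applies \eqref{eq: kunita decomposition} at the deterministic time $u$ directly, since $X\,1_{\{u<\tau_n\}}\in\F_u$, which is slightly cleaner than stopping at $u\wedge\tau_n$; and the ``correction term on $\{u\ge\tau_n\}$'' you worry about is identically zero, since both $Y^{\tau_n}_T$ and $Y_{\tau_n}$ vanish on that set.)
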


\begin{proof}
   This is part of Proposition 9 of \cite{Yoeurp1985}. For the convenience of the reader, we provide a proof. First consider a simple processes of the form $Y_r(\omega) = X(\omega) 1_{(t,\infty]}(r)$ for some bounded $X \in \F_t$. For such $Y$ we get from \eqref{eq: kunita decomposition}
   \begin{align*}
      E_Q(Y^{\tau_n}_T) & = E_Q( X 1_{(t,\infty]} (\tau_n \wedge T)) = E_Q( X 1_{\{t < \tau_n\}} 1_{\{t < T\}}) = E_P( X 1_{\{t < \tau_n\}} Z_t) \\
      & = E_P( X 1_{\{t < \tau_n\}} (M_t - A_t)) = E_P( X 1_{\{t < \tau_n\}} (M^{\tau_n}_t - A_t)).
   \end{align*}
   Now we use that $M^{\tau_n}$ is a uniformly integrable martingale, and that $X 1_{\{t<\tau_n\}}$ is $\F_{t}$-measurable, to replace $M^{\tau_n}_t$ by $M^{\tau_n}_\infty = M_{\tau_n}$. Moreover we have
   \begin{align*}
      X 1_{\{t < \tau_n\}} A_t & = X 1_{\{t < \tau_n\}} A_{\tau_n} - X 1_{\{t < \tau_n\}}(A_{\tau_n} - A_t) = X 1_{\{t < \tau_n\}} A_{\tau_n} - \int_0^{\tau_n} Y_s dA_s,
   \end{align*}
   which proves \eqref{eq:explicit foellmer} for such simple $Y$. The general case now follows from the monotone class theorem. 
\end{proof}

\begin{cor}\label{cor:yoeurps formula 2}
   Let $Y$ be a bounded adapted process that is $P$-a.s. c\`adl\`ag. Define
   \begin{align*}
      Y^{T-}_t(\omega) = Y_t(\omega) 1_{\{t < T(\omega)\}} + \limsup_{s \rightarrow T(\omega)-} Y_s(\omega) 1_{\{t \ge T(\omega) \}}.
   \end{align*}
   Let $Z$ and $(\tau_n)$ be as in Lemma \ref{lem:yoeurps formula}. Then
   \begin{align*}
      E_Q(Y^{T-}_{\tau_n}) = E_P\left( Y_{\tau_n} Z_{\tau_n} + \int_0^{\tau_n} Y_{s-} dA_s\right).
   \end{align*}
\end{cor}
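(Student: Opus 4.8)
The plan is to reduce the claim to Lemma~\ref{lem:yoeurps formula}, applied not to $Y$ itself but to its \emph{left-limit process} $Y_- = (Y_{s-})_{s \ge 0}$, which is the natural predictable object here, and then to correct for the single jump that separates $Y^{T-}_{\tau_n}$ from $(Y_-)_{\tau_n \wedge T}$ using the Kunita--Yoeurp identity~\eqref{eq: kunita decomposition stopping time} at the stopping time $\tau_n$. Since $Y$ is bounded and $P$-a.s.\ c\`adl\`ag, the process $Y_-$ is bounded, adapted (each $Y_{s-}$ being $\F_{s-}$-measurable), and left-continuous on the a.s.\ set where $Y$ is c\`adl\`ag, hence may be taken to be a bounded predictable process; on that same a.s.\ set $\limsup_{s \to T-} Y_s = \lim_{s \to T-} Y_s = (Y_-)_T$, so the random variable $Y^{T-}_{\tau_n}$ of the statement agrees with the one built from $Y_-$.

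First I would establish the pathwise identity
\begin{align*}
   Y^{T-}_{\tau_n} = (Y_-)_{\tau_n \wedge T} + (\Delta Y)_{\tau_n}\, 1_{\{\tau_n < T\}},
   \qquad (\Delta Y)_{\tau_n} := Y_{\tau_n} - Y_{\tau_n -}.
\end{align*}
On $\{\tau_n < T\}$ the left-hand side is $Y_{\tau_n}$ and $(Y_-)_{\tau_n \wedge T} = Y_{\tau_n -}$, so the identity reduces to $Y_{\tau_n} = Y_{\tau_n -} + (\Delta Y)_{\tau_n}$; on $\{\tau_n \ge T\}$ both sides equal $Y_{T-} = (Y_-)_T$ and the jump term is killed by its indicator. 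Note that $Y_{\tau_n}$ is $\F_{\tau_n}$-measurable (since $Y$ is adapted and a.s.\ c\`adl\`ag), $Y_{\tau_n -}$ is $\F_{\tau_n -}\subseteq \F_{\tau_n}$-measurable (since $Y_-$ is predictable), and both are bounded, so $(\Delta Y)_{\tau_n}$ is a bounded $\F_{\tau_n}$-measurable random variable.

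Next I would take $Q$-expectations of this identity and evaluate the two terms. Lemma~\ref{lem:yoeurps formula} applied to the bounded predictable process $Y_-$ gives
\begin{align*}
   E_Q\big((Y_-)_{\tau_n \wedge T}\big) = E_P\left( Y_{\tau_n -} Z_{\tau_n} + \int_0^{\tau_n} Y_{s-}\, dA_s\right).
\end{align*}
For the jump term I would first extend \eqref{eq: kunita decomposition stopping time} from indicators $1_A$, $A\in\F_{\tau_n}$, to all bounded $\F_{\tau_n}$-measurable random variables by linearity and monotone convergence, and then apply it on $\{\tau_n < T\} = \{T > \tau_n\}$ to obtain $E_Q\big((\Delta Y)_{\tau_n} 1_{\{\tau_n < T\}}\big) = E_P\big((\Delta Y)_{\tau_n} Z_{\tau_n}\big)$, where the factor $1_{\{\tau_n<\infty\}}$ produced by \eqref{eq: kunita decomposition stopping time} is harmless under the convention already in force in Lemma~\ref{lem:yoeurps formula}. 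Summing the two contributions and using $Y_{\tau_n -} + (\Delta Y)_{\tau_n} = Y_{\tau_n}$ produces precisely
\begin{align*}
   E_Q\big(Y^{T-}_{\tau_n}\big) = E_P\left( Y_{\tau_n} Z_{\tau_n} + \int_0^{\tau_n} Y_{s-}\, dA_s\right).
\end{align*}

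The case analysis and the monotone class extension of \eqref{eq: kunita decomposition stopping time} are routine; the one place calling for care — and the only genuine obstacle — is the measure-theoretic bookkeeping imposed by our framework. The filtration is not $P$-complete, and $Q \gg P$ may charge $P$-null sets, so $Y$ need not be $Q$-a.s.\ c\`adl\`ag; consequently $Y_-$ has to be produced as an honest predictable process and $\limsup_{s\to T-}Y_s$ read as $(Y_-)_T$ on the relevant a.s.\ set, while the values of $Y$, $Z$ and $A$ at $\infty$ on $\{\tau_n = \infty\}$ (and of the left limit at $T = 0$) require the same conventions already used for Lemma~\ref{lem:yoeurps formula} and discussed in Appendix~\ref{app:complete filtration}. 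Once these points are granted, the corollary is an immediate consequence of Lemma~\ref{lem:yoeurps formula} and \eqref{eq: kunita decomposition stopping time} via the decomposition above.
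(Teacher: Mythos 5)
Your proof is correct and follows essentially the same route as the paper: both arguments hinge on applying Lemma~\ref{lem:yoeurps formula} to the predictable left-limit process $Y_-$ and then correcting via the Kunita--Yoeurp identity. The only cosmetic difference is that you split $Y^{T-}_{\tau_n}$ as $(Y_-)_{\tau_n\wedge T} + (\Delta Y)_{\tau_n}1_{\{\tau_n<T\}}$ rather than as $Y_{\tau_n}1_{\{T>\tau_n\}} + (Y_-)_T 1_{\{T\le\tau_n\}}$ as in the paper; the two decompositions are algebraically equivalent and lead to the identical cancellation.
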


\begin{proof}
   Define $Y^{-}_t(\omega) = Y_{t-}(\omega) = \limsup_{s \rightarrow t-} Y_s$ for $t > 0$, and $Y^-_0 = Y_0$. Then $Y^-$ is a predictable process, because it is the point-wise limit of the step functions
   \begin{align*}
      Y^n_t = Y_0 1_{\{0\}}(t) + \sum_{k \ge 0} \limsup_{s \rightarrow k2^{-n} -} 1_{(k2^{-n}, (k+1)2^{-n}]}(t).
   \end{align*}
   Therefore we can apply Lemma \ref{lem:yoeurps formula} to $Y^-$. Observe that 
   \begin{align*}
      Y^{T-}_{\tau_n} = Y_{\tau_n} 1_{\{T > \tau_n\}} + Y_{T-} 1_{\{T \le \tau_n\}} = Y_{\tau_n} 1_{\{T > \tau_n\}} + (Y^-)_T 1_{\{T \le \tau_n\}}.
   \end{align*}
   Now \eqref{eq: kunita decomposition} implies that $E_Q(Y_{\tau_n} 1_{\{T > \tau_n\}}) = E_P(Y_{\tau_n} Z_{\tau_n})$, whereas \eqref{eq:explicit foellmer} and then again \eqref{eq: kunita decomposition} applied to the second term give
   \begin{align*}
      E_Q((Y^-)_T 1_{\{T \le \tau_n\}}) &=  E_Q((Y^-)^{\tau_n}_T ) - E_Q((Y^-)_{\tau_n} 1_{\{T > \tau_n\}}) \\
      &= E_P\left(Y_{\tau_n-} Z_{\tau_n} + \int_0^{\tau_n} Y_{s-} dA_s\right) - E_P\left(Y_{\tau_n-} Z_{\tau_n}\right) \\
      & = E_P\left( \int_0^{\tau_n} Y_{s-} dA_s\right).
   \end{align*}
\end{proof}

\subsection{The predictable case}

We still assume that $(\Omega, \F, (\F_t), P)$ is a probability space on which it is possible to solve the Kunita-Yoeurp problem. Let $S$ be a $d$-dimensional predictable semimartingale, let $\W_{1}$ be defined as in \eqref{eq:W1}, and let $Z$ be a supermartingale density for $\W_1$. Here we examine the structure of $S$ and $Z$ closer. This will allow us to apply Lemma \ref{lem:yoeurps formula} to deduce that $S^{T-}$ is a local martingale under the dominating measure associated to $Z$. Note that Yoeurp \cite{Yoeurp1985} also establishes a generalized Girsanov formula, which we could apply directly rather than using Lemma \ref{lem:yoeurps formula}. However the use of Lemma \ref{lem:yoeurps formula} turns out to be rather instructive, and it allows us to obtain some insight into why the non-predictable case is more complicated.

\begin{rmk}
   Observe that thanks to predictability, $S-S_0$ is almost surely locally bounded. This follows immediately from I.2.16 of \cite{Jacod2003}, which says that for $C>0$ there exists an announcing sequence for the entrance time of $S$ into $\{x \in \R^d: |x|\ge C\}$. By Corollary \ref{cor:NA1 implies semimartingale} it would in particular suffice to assume that $S$ satisfies (NA1$_s$) and that $Z$ is a supermartingale density for $\W_{1,s}$. Then $S$ is a semimartingale, satisfies (NA1), and $Z$ is a supermartingale density for $\W_1$.
\end{rmk}

Since $S-S_0$ is locally bounded, it is even a special semimartingale (see \cite{Jacod2003}, I.4.23 (iv)). That is, there exists a unique decomposition
\begin{align}\label{eq: semimartingale decomposition}
   S = S_0 + M + A,
\end{align}
where $M$ is a local martingale with $M_0 = 0$, and $A$ is a predictable process of finite variation with $A_0 = 0$. This implies that $M = S - S_0 - A$ is predictable. But any predictable right-continuous local martingale is continuous (\cite{Jacod2003}, Corollary I.2.31). Therefore $S$ is of the form \eqref{eq: semimartingale decomposition} with continuous $M$.

But then also $A$ must be continuous, because (NA1) implies $dA^i \ll d\langle M^i \rangle$ for every $i=1,\dots,d$, where $M=(M^1, \dots, M^d)$ and $A=(A^1,\dots, A^d)$. This is a well known fact, see for example Ankirchner's thesis \cite{Ankirchner2005}, Lemma 9.1.2. Otherwise one could find a predictable process $H^i$ which satisfies $H^i \cdot M^i \equiv 0$, but for which $H^i \cdot A^i$ is increasing; 
this would clearly contradict $\K_1$ being bounded in probability. Therefore $A$ and then also $S$ must be continuous.

It turns out that $S$ must satisfy the \emph{structure condition} as defined by Schweizer \cite{Schweizer1995}. Recall that $L^2_{\text{loc}}(M)$ is the space of progressively measurable processes $(\lambda_t)_{t \ge 0}$ that are locally square integrable with respect to $M$, i.e. such that
\begin{align*}
   \int_0^t \sum_{i,j=1}^d \lambda^i_s \lambda^j_s d \langle M^i, M^j \rangle_s < \infty
\end{align*}
for every $t >0$. For details see \cite{Jacod2003}, III.4.3.

\begin{defn}\label{def:structure condition}
   Let $S = S_0 + M + A$ be a $d$-dimensional special semimartingale with locally square-integrable $M$. Define
   \begin{align*}
      C_t = \sum_{i=1}^d \langle M^i \rangle_t \qquad \text{and for } 1 \le i, j \le d: \qquad \sigma^{ij}_t = \frac{d \langle M^i, M^j \rangle_t }{d C_t}.
   \end{align*}
   Note that $\sigma$ exists by the Kunita-Watanabe inequality. Then $S$ satisfies the \emph{structure condition} if $dA^i \ll d \langle M^i \rangle$ for every $1 \le i \le d$, with predictable derivative $\alpha^i_t = d A^i_t/d\langle M^i \rangle_t$, and if there exists a predictable process $\lambda_t = (\lambda^1_t, \dots, \lambda^d_t) \in L^2_{\text{loc}}(M)$, such that 
   \begin{align} \label{eq:structure condition}
      (\sigma_t \lambda_t)^i = \alpha^i_t \sigma^{ii}_t
   \end{align}
   for every $i = 1, \dots, d$. Note that $\lambda$ might not be uniquely determined, but the stochastic integral $\int \lambda dM$ does not depend on the choice of $\lambda$, see \cite{Schweizer1995}. If
   \begin{align}\label{eq:structure condition until infty}
      \int_0^\infty \sum_{i,j=1}^d \lambda^i_t \sigma^{ij}_t \lambda^j_t dC_t < \infty,
   \end{align}
   then we say that S satisfies the \emph{structure condition until $\infty$}.
\end{defn}

%

Recall that two one-dimensional local martingales $L$ and $N$ are called \emph{strongly orthogonal} if $LN$ is a local martingale. 
Also recall that the \emph{stochastic exponential} of a semimartingale $X$ is defined by
\begin{align*}
   \mathcal{E}(X)_t = 1 + \int_0^t \mathcal{E}(X)_{s-} dX_s, \qquad t \ge 0.
\end{align*}
Finally we recall that every nonnegative supermartingale $Y$ satisfies $Y_{\tau+t} \equiv 0$ for all $t \ge 0$, where $\tau = \inf\{t \ge 0: Y_{t-}=0 \text{ or } Y_t = 0\}$.

\begin{lem}\label{lem: structure condition for predictable S}
   Suppose that $Z$ is a supermartingale density for the predictable semimartingale $S$. Then $S$ satisfies the structure condition until $\infty$, and 
   \begin{align}\label{eq: explicit supermartingale density}
      dZ_t = Z_{t-}( - \lambda_t dM_t + dN_t - dB_t)
   \end{align}
   where $\lambda$ satisfies \eqref{eq:structure condition} and \eqref{eq:structure condition until infty}, $N$ is a local martingale that is strongly orthogonal to $M$, $B$ is increasing, 
   and $\mathcal{E}(N-B)_\infty > 0$.
   
   Conversely, if a predictable process $S$ satisfies the structure condition until $\infty$, and if $Z$ is defined by \eqref{eq: explicit supermartingale density}, then $Z$ is a supermartingale density for $S$.
   
   In particular, for predictable $S$, the structure condition until $\infty$ is equivalent to (NA1).
\end{lem}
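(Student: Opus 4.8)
The plan is to prove the two implications of the lemma separately and then read off the equivalence with (NA1) from Theorem~\ref{thm: ex supermartingale density}. For the forward direction I would first use the facts established just before the lemma: $S = S_0 + M + A$ with $M$ a \emph{continuous} local martingale, $A$ a continuous predictable finite-variation process with $dA^i \ll d\langle M^i\rangle$, and $Z, Z_-$ strictly positive (because $Z_\infty > 0$ a.s.). Hence $Z = Z_0\,\mathcal{E}(L)$ for a semimartingale $L$ with $L_0 = 0$, $\Delta L > -1$ and $dZ_t = Z_{t-}\,dL_t$. Writing the local Doob--Meyer decomposition $L = \mathcal{M} - B$ and then applying the Galtchouk--Kunita--Watanabe decomposition of the local martingale $\mathcal{M}$ against the continuous $M$, I obtain $\mathcal{M} = -\lambda\cdot M + N$ with $\lambda \in L^2_{\mathrm{loc}}(M)$ predictable, $N$ a local martingale strongly orthogonal to $M$, and $B$ predictable increasing; this is exactly \eqref{eq: explicit supermartingale density}. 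Since $M$ is continuous and $N$ is strongly orthogonal to $M$, the covariation $[-\lambda\cdot M,\,N-B]$ vanishes, so Yor's formula gives the factorisation $\mathcal{E}(L) = \mathcal{E}(-\lambda\cdot M)\,\mathcal{E}(N-B)$. Both factors are nonnegative with product $Z_\infty/Z_0 > 0$ a.s., hence each is strictly positive at $\infty$; in particular $\mathcal{E}(N-B)_\infty > 0$, while a continuous local martingale $X$ satisfies $\mathcal{E}(X)_\infty > 0$ exactly on $\{\langle X\rangle_\infty < \infty\}$ (Dambis--Dubins--Schwarz time change), so $\int_0^\infty \lambda^\top\!\sigma\lambda\,dC = \langle\lambda\cdot M\rangle_\infty < \infty$ a.s., which is \eqref{eq:structure condition until infty}.

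Next I would extract the structure condition \eqref{eq:structure condition}. For a $1$-admissible $H$ with $Y = 1 + H\cdot S \in \W_1$, integration by parts, using $dZ_t = Z_{t-}(-\lambda_t\,dM_t + dN_t - dB_t)$ and the continuity of $M$ and $A$, identifies the predictable finite-variation part (``drift'') of $ZY$ as
\begin{align*}
   dD_t = Z_{t-}\sum_{i=1}^d H^i_t\bigl(\alpha^i_t\sigma^{ii}_t - (\sigma_t\lambda_t)^i\bigr)\,dC_t \;-\; Z_{t-}Y_{t-}\,dB_t.
\end{align*}
Since $ZY$ is a nonnegative supermartingale, $D$ must be decreasing; varying $H$ in the classical way (as in \cite{Schweizer1995}; cf.\ Lemma~9.1.2 of \cite{Ankirchner2005}), e.g. by probing each coordinate with $H^i = c\,\mathrm{sign}(\alpha^i\sigma^{ii}-(\sigma\lambda)^i)\,1_{[[0,\rho]]}$ for small $c$ and a stopping time $\rho$ keeping $H$ $1$-admissible, forces $\alpha^i_t\sigma^{ii}_t = (\sigma_t\lambda_t)^i$ for $dC\otimes dP$-a.e.\ $(t,\omega)$ and every $i$. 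Together with the integrability obtained above, $S$ satisfies the structure condition until $\infty$.

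For the converse, let $S$ be predictable, satisfy the structure condition until $\infty$, and let $Z$ be given by \eqref{eq: explicit supermartingale density} with $\mathcal{E}(N-B)_\infty > 0$; write $Z = Z_0\mathcal{E}(L)$ with $L = -\lambda\cdot M + N - B$. The decomposition $dZ = Z_-(-\lambda\,dM + dN) - Z_-\,dB$ exhibits $Z$ as a nonnegative local supermartingale, hence a supermartingale, and the factorisation $\mathcal{E}(L) = \mathcal{E}(-\lambda\cdot M)\mathcal{E}(N-B)$ gives $Z_\infty > 0$, since $\langle\lambda\cdot M\rangle_\infty < \infty$ makes $\mathcal{E}(-\lambda\cdot M)_\infty > 0$. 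For $Y = 1 + H\cdot S \in \W_1$ the same integration by parts yields the drift $dD_t = -Z_{t-}Y_{t-}\,dB_t \le 0$, the $dC_t$-term vanishing by \eqref{eq:structure condition}; thus $ZY$ is a nonnegative local supermartingale and hence a supermartingale, so $Z$ is a supermartingale density for $S$. The final equivalence follows at once: if the structure condition until $\infty$ holds, taking $N\equiv 0$, $B\equiv 0$ produces a supermartingale density for $\W_1$, so (NA1) holds by Theorem~\ref{thm: ex supermartingale density}; conversely, (NA1) provides a supermartingale density for $\W_1$ by Theorem~\ref{thm: ex supermartingale density}, and the forward implication applies.

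I expect the main obstacle to be the forward implication, and within it two points: organising the stochastic-exponential and Galtchouk--Kunita--Watanabe bookkeeping for $Z$ so that \eqref{eq: explicit supermartingale density} comes out with the correct orthogonality and signs, and --- the genuinely new ingredient here --- deducing the \emph{global} integrability \eqref{eq:structure condition until infty} purely from $Z_\infty > 0$, which hinges on the factorisation $\mathcal{E}(L) = \mathcal{E}(-\lambda\cdot M)\mathcal{E}(N-B)$ and on the dichotomy $\mathcal{E}(X)_\infty > 0 \iff \langle X\rangle_\infty < \infty$ for continuous local martingales.
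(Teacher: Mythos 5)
Your proof follows essentially the same route as the paper's: express $dZ_t = Z_{t-}(dL_t - dB_t)$, use GKW against the continuous $M$ to get $L = -\lambda\cdot M + N - B$, factor $\mathcal{E}(L) = \mathcal{E}(-\lambda\cdot M)\,\mathcal{E}(N-B)$ to read off the global integrability \eqref{eq:structure condition until infty} and $\mathcal{E}(N-B)_\infty > 0$ from $Z_\infty > 0$, and then compute the drift of $ZY$ by integration by parts to extract \eqref{eq:structure condition}. The converse and the final equivalence with (NA1) are handled the same way.

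However, there is a genuine gap in your probing step. You write the drift of $ZY$ as $dD_t = Z_{t-}\sum_i H^i_t(\alpha^i_t\sigma^{ii}_t - (\sigma_t\lambda_t)^i)\,dC_t - Z_{t-}Y_{t-}\,dB_t$ and then probe with the \emph{direct} strategy $H^i = c\,\mathrm{sign}(\cdot)1_{[[0,\rho]]}$ ``for small $c$''. Requiring $D$ to be nonincreasing then only yields $c\,|\alpha^i\sigma^{ii}-(\sigma\lambda)^i|\,dC_t \le Y_{t-}\,dB_t$, and since $1$-admissibility forces $c$ to be \emph{small} (or $\rho$ small), this gives at best an absolute-continuity statement $|\alpha^i\sigma^{ii}-(\sigma\lambda)^i|\,dC \ll dB$ with bounded density — it does not force the left-hand side to vanish. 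To kill the $dB$ term you must let the probing multiplier tend to $\infty$, which is incompatible with $1$-admissibility for direct strategies. The paper avoids this by first rewriting every wealth process in \emph{fractional} form $W^\pi$ with $H = \pi W^\pi_-$; then admissibility is automatic for any bounded $\pi$ (since $W^\pi > 0$), the drift becomes $W^\pi_{t-}Z_{t-}(-dB_t + \pi^i_t\,dD^i_t)$ with a strictly positive common prefactor, and one can take $\pi^i = n\gamma^i$ with $n\to\infty$ to conclude $D^i \equiv 0$. (The paper also uses the predictable Radon--Nikodym theorem of \cite{Delbaen1995a} to obtain the predictable $\{-1,1\}$-valued $\gamma^i$ in a way that respects the non-complete filtration; your $\mathrm{sign}(\cdot)$ plays the same role but you should be explicit about this.) Either pass to fractional strategies and let $n\to\infty$, or fix the scaling: probe with large $c$ on shrinking intervals $]]\sigma, \rho(\sigma,c)]]$ and patch, but ``small $c$'' as written is the wrong direction and the step fails.
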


\begin{proof}
   This is essentially Proposition 3.2 of \cite{Larsen2007} in infinite time. We provide a slightly simplified version of their proof, because later we will need some results obtained during the proof.
   
   Let $Z$ be a supermartingale density. Since $Z$ is strictly positive, it is of the form $dZ_t = Z_{t-}(dL_t - dB_t)$ for a local martingale $L$ and a predictable increasing process $B$. Since $M$ is continuous, there exists a predictable process $\lambda \in L^2_{\text{loc}}(M)$, such that $dL_t = \lambda_t dM_t + dN_t$, where $N$ is a local martingale that is strongly orthogonal to all components of $M$, see \cite{Jacod2003}, Theorem III.4.11. In particular $[H\cdot M, N]$ is a local martingale for every integrand $H$. Moreover
   \begin{align*}
      0 < Z_\infty = Z_0 \mathcal{E}( \lambda \cdot M + N - B)_\infty = Z_0 \mathcal{E}( \lambda \cdot M)_\infty \mathcal{E}(N - B)_\infty,
   \end{align*}
   which is only possible if $\lambda$ satisfies \eqref{eq:structure condition until infty} and if $\mathcal{E}(N-B)_\infty > 0$. It only remains to show that $\lambda$ also satisfies \eqref{eq:structure condition}.
   
   Let $H$ be a 1-admissible strategy. Write $W^H_t = 1 + (H \cdot S)_t$ for the wealth process generated by $H$. Then $W^H Z$ is a nonnegative supermartingale. Since $Z$ is strictly positive, we must have $W^H_t \equiv 0$ for $t \ge \tau^H = \inf\{ s \ge 0: W^H_{s-} = 0 \text{ or } W^H_s = 0\}$. Therefore we may replace $H$ by $H 1_{\{t < \tau^H\}}$ without loss of generality. Define $\pi_t = H_t/W^H_{t-}$, where we interpret $0/0=0$ as before. Then finally
   \begin{align*}
      W^H_t = 1 + (H \cdot S)_t = 1 + \int_0^t \pi_s W^H_{s-} dS_s.
   \end{align*}
   So if we slightly abuse notation and define $W^\pi_t = W^H_t$, then $W^\pi_t = 1 + \int_0^t \pi_s W^\pi_{s-} dS_s$, and every wealth process is of this form.
   
   We write $dX_t \sim dY_t$ if $d(X - Y)_t$ is the differential of a local martingale. Integration by parts applied to $Z W^\pi$ gives
   \begin{align}\label{eq:ibp supermartingale density} \nonumber
      d(Z W^\pi)_t & = W^\pi_{t-} dZ_t + Z_{t-} \pi_t W^\pi_{t-} dS_t + d[(\pi W^\pi_{-} dS), Z]_t \\ \nonumber
      & = W^\pi_{t-} Z_{t-} (\lambda_t dM_t + dN_t - dB_t) + Z_{t-} \pi_t W^\pi_{t-} (dM_t + dA_t)  \\ \nonumber
      & \qquad + W^\pi_{t-} Z_{t-} d[\pi\cdot (M + A), \lambda \cdot M + N - B]_t \\
      & \sim - W^\pi_{t-} Z_{t-} dB_t + Z_{t-} \pi_t W^\pi_{t-} dA_t + W^\pi_{t-} Z_{t-} (d[ \pi \cdot M, \lambda \cdot M]_t - d[\pi \cdot A, B]_t).
   \end{align}
   Of course $[\pi \cdot A, B] \equiv 0$, because $A$ is continuous. But thanks to Proposition I.4.49 c) of \cite{Jacod2003}, \eqref{eq:ibp supermartingale density} has the advantage that it is also valid if $M$ and $A$ are not necessarily continuous (although in that case $\lambda \cdot M$ should be replaced by a general local martingale $N$). 
   
   Let now $C$ and $\sigma$ be as described in Definition \ref{def:structure condition}. Theorem III.4.5 of \cite{Jacod2003} implies
   \begin{align}\label{eq:second ibp supermartingale density} \nonumber
      d(Z W^\pi)_t &\sim W^\pi_{t-} Z_{t-} \left( - dB_t + \pi_t dA_t + d\langle \pi \cdot M, \lambda \cdot M\rangle_t \right) \\
      & =  W^\pi_{t-} Z_{t-}\left( -dB_t + \sum_{i = 1}^d \pi^i_t \left( dA^i_t + \sum_{j=1}^d \sigma^{ij}_t \lambda^j_t dC_t \right)\right).
   \end{align}
   Assume that there exists an $i \in \{1,\dots,d\}$ for which the almost surely continuous process
  \begin{align*}
      D^i_t = \int_0^t \left( Z_{s-} dA^i_s + \sum_{j=1}^d \sigma^{ij}_s \beta^j_s dB_s \right)
   \end{align*}
   is not evanescent.  We claim that then there exists a 1-admissible strategy $\pi$ for which the finite variation part of $(Z W^\pi)$ is increasing on a small time interval. This is a contradiction to $Z W^\pi$ being a supermartingale. By the predictable Radon-Nikodym theorem of Delbaen and Schachermayer (\cite{Delbaen1995a}, Theorem 2.1 b)), there exists a predictable $\gamma^i$ with values in $\{-1, 1 \}$, such that $\int_0^\cdot \gamma^i_s dD^i_s = V^i$, where $V^i$ denotes the total variation process of $D^i$. Note that \cite{Delbaen1995a} work with complete filtrations, but given the $(\F^P_t)$-predictable $\gamma^i$ that they construct, we can apply Lemma \ref{lem:complete predictable optional} to obtain a $(\F_t)$-predictable $\tilde{\gamma}^i$ that is indistinguishable from $\gamma^i$.
   
   Let now  $n\in \N$ and take $\pi_t = n \gamma^i_t$ in \eqref{eq:second ibp supermartingale density}. Then $d(Z W^{\pi})_t \sim W^{\pi}_{t-} Z_{t-}\left( -dB_t + n dV^i_t \right)$, and $V^i$ is an increasing process. Since $\pi$ is bounded, $W^{\pi}_{t-} > 0$ for all $t \ge 0$, and of course also $Z_{t-} > 0$ for all $t \ge 0$. But if $H$ is strictly positive, and $F$ is a finite variation process, then $\int_0^\cdot H_s dF_s$ is a decreasing process if and only if $F$ is a decreasing process. And clearly $- B + nV^i$ can only be decreasing for all $n \in \N$ if $V^i \equiv 0$, a contradiction.

   Therefore $D^i$ is evanescent, and thus for some predictable $\alpha^i$,
   \begin{align*}
      0 \equiv \left( dA^i_t + \sum_{j=1}^d \sigma^{ij}_t \lambda^j_t dC_t \right) = \left( \alpha^i_t d\langle M^i_t \rangle + \sum_{j=1}^d \sigma^{ij}_t \lambda^j_t dC_t \right) =  \left( \alpha^i_t \sigma^{ii}_t + (\sigma_t \lambda_t)^i\right) dC_t
   \end{align*}
   so that
   \begin{align}\label{eq:predictable proof1}
      \alpha^i \sigma^{ii} = -(\sigma \lambda)^i \quad dC(\omega)\otimes P(d\omega)-\text{almost everywhere,}
   \end{align}
   i.e. \eqref{eq:structure condition} is satisfied, and the proof is complete.
   
   The converse direction is easy and follows directly from \eqref{eq:ibp supermartingale density}.
\end{proof}

Next we will show that $S^{T-}$ is a local martingale under the measure $Q$ that is associated to $Z$. But first we observe that if $Z$ is a supermartingale density, then $S Z$ is \emph{not} necessarily a local martingale.

\begin{cor}
   Let $Z$ and $S$ be as in Lemma \ref{lem: structure condition for predictable S}. Then $ZS^i$ is a local supermartingale if and only if $S^i \ge 0$ on the support of the measure $dC$. If $S^i \ge 0$ identically, then $ZS^i$ is a supermartingale.

   $ZS^i$ is a local martingale if and only if $S^i = 0$ on the support of the measure $dC$.
\end{cor}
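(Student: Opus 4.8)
The plan is to compute the canonical decomposition of the semimartingale $ZS^i$ by integration by parts and to read off all three claims from its predictable finite-variation part. By Lemma~\ref{lem: structure condition for predictable S} we may write $S=S_0+M+A$ with $M$ and $A$ continuous, and by the structure condition \eqref{eq:structure condition} together with $d\langle M^i\rangle_t=\sigma^{ii}_t\,dC_t$ we have $dA^i_t=\alpha^i_t\sigma^{ii}_t\,dC_t=(\sigma_t\lambda_t)^i\,dC_t$; moreover $Z$ is of the form \eqref{eq: explicit supermartingale density}, i.e.\ $dZ_t=Z_{t-}(-\lambda_t\,dM_t+dN_t-dB_t)$ with $N$ a local martingale strongly orthogonal to $M$ and $B$ predictable and increasing.

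First I would apply the product rule $d(ZS^i)_t=Z_{t-}\,dS^i_t+S^i_t\,dZ_t+d[Z,S^i]_t$. Since $S^i$ is continuous, $[Z,S^i]=[Z,M^i]$, and since $M^i$ is continuous and $N$ is strongly orthogonal to $M$, one obtains $d[Z,M^i]_t\sim-Z_{t-}(\sigma_t\lambda_t)^i\,dC_t$, writing $\sim$ for equality up to a local-martingale differential as in the proof of Lemma~\ref{lem: structure condition for predictable S}. Substituting $dS^i_t=dM^i_t+dA^i_t$ and \eqref{eq: explicit supermartingale density}, the term $Z_{t-}\,dA^i_t=Z_{t-}(\sigma_t\lambda_t)^i\,dC_t$ cancels $-Z_{t-}(\sigma_t\lambda_t)^i\,dC_t$, exactly the cancellation already carried out in \eqref{eq:ibp supermartingale density}, and there remains
\begin{align*}
   d(ZS^i)_t\sim-Z_{t-}S^i_t\,dB_t .
\end{align*}
The process $t\mapsto-\int_0^t Z_{s-}S^i_s\,dB_s$ is predictable of finite variation, so — using that every local supermartingale is a special semimartingale — $ZS^i$ is a local supermartingale if and only if this process is decreasing; since $Z_{s-}>0$ this is exactly the requirement that $S^i\ge 0$ hold $dB$-almost everywhere, i.e.\ on the support of $dB$. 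In the same way $ZS^i$ is a local martingale if and only if $\int_0^\cdot Z_{s-}S^i_s\,dB_s\equiv 0$, i.e.\ if and only if $S^i=0$ on the support of $dB$. Finally, if $S^i\ge 0$ everywhere, then $ZS^i\ge 0$ is a nonnegative local supermartingale, hence a genuine supermartingale by Fatou's lemma applied along a localizing sequence.

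The only step with real content is the cancellation of the $dC$-terms, which is precisely where the structure condition for $S$ is used; the rest is the routine product rule together with the standard facts that local supermartingales are special and that nonnegative local supermartingales are supermartingales. The one technical point to watch is the localization needed to ensure the $\sim$-remainder is a genuine local martingale — this uses that $Z_-$ and $S^i$ are locally bounded, the latter by continuity of $S^i$.
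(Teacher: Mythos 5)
Your proof is correct and follows essentially the same route as the paper's: integrate by parts on $ZS^i$, use the structure condition together with the explicit form \eqref{eq: explicit supermartingale density} of $Z$ to cancel the $dC$-terms coming from $Z_{t-}\,dA^i_t$ and $d[S^i,Z]_t$, and read off the surviving drift $-Z_{t-}S^i_{t-}\,dB_t$; the final assertion then follows from Fatou's lemma applied along a localizing sequence. The paper compresses this to a single display and the same Fatou remark.

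One point worth keeping: your conclusion is stated in terms of the support of $dB$, the increasing part of $Z$, while the corollary as printed says ``support of $dC$''. The computation (yours and the paper's) unambiguously produces $dB$, and since $dB$ and $dC$ need not share a support, the ``$dC$'' in the statement appears to be a typo; your version is the correct one. The extra remarks you supply — that local supermartingales are special semimartingales so the sign of the predictable FV part decides the matter, and that local boundedness of $Z_-$ and of the continuous $S^i$ justifies the localization behind the $\sim$ notation — are exactly the details the paper leaves implicit and are all in order.
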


\begin{proof}
   \eqref{eq:structure condition} and \eqref{eq: explicit supermartingale density} imply that
   \begin{align*}
      d(ZS^i)_t & = Z_{t-}dS^i_t + S^i_{t-} dZ_t + d[S^i,Z]_t  \sim - Z_{t-} S^i_{t-} dB_t.
   \end{align*}
   Nonnegative local supermartingales are supermartingales by Fatou's lemma, and therefore the proof is complete.
\end{proof}

Another immediate consequence of Lemma \ref{lem: structure condition for predictable S} is that in the predictable case, the maximal elements among the supermartingale densities are always local martingales. This is important in the duality approach to utility maximization. For details we refer the reader to \cite{Larsen2007}.

We are now ready to prove Theorem \ref{thm:predictable supermartingale density}.

\begin{cor*}[Theorem \ref{thm:predictable supermartingale density}]
   Let $S$ be a predictable semimartingale, and let $Z$ be a supermartingale density for $S$. Let $T$ be a stopping time and $Q$ be a probability measure, such that  $(Z/E_P(Z_0),T)$ is the Kunita-Yoeurp decomposition of $Q$ wrt $P$. Then $S^{T-}$ is a $Q$-local martingale.
   
   Conversely if $Q \gg P$ with Kunita-Yoeurp decomposition $(Z,T)$ wrt $P$, and if $S^{T-}$ is a local martingale under $Q$, then $Z$ is a supermartingale density for $S$.
\end{cor*}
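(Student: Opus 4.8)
The plan is to recover the structure of $S$ and $Z$ from Lemma~\ref{lem: structure condition for predictable S}, exploit the cancellation of drifts already present in its proof, and then transfer the resulting identity to $Q$ via Corollary~\ref{cor:yoeurps formula 2}. After rescaling we may assume $E_P(Z_0)=1$. By Lemma~\ref{lem: structure condition for predictable S}, $S=S_0+M+A$ with $M$ and $A$ continuous, $dA^i\ll d\langle M^i\rangle$, and $dZ_t=Z_{t-}(-\lambda_t\,dM_t+dN_t-dB_t)$, so the increasing component in the Doob--Meyer decomposition of $Z$ is $A^Z=\int_0^\cdot Z_{s-}\,dB_s$. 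Since $S$ is predictable, $S-S_0$ is locally bounded; fix stopping times $\sigma_k\uparrow\infty$ with $\sigma_k<\infty$ and $(S-S_0)^{\sigma_k}$ bounded, fix a coordinate $i$, and set $Y=(S^i-S_0^i)^{\sigma_k}$, a bounded continuous adapted process with $Y_0=0$. For small $\varepsilon>0$ the process $1+\varepsilon Y$ belongs to $\W_1$, and inserting the structure condition~\eqref{eq:structure condition} into the integration-by-parts identity~\eqref{eq:second ibp supermartingale density} shows that the finite-variation part of $Z(1+\varepsilon Y)$ equals $-\int_0^\cdot(1+\varepsilon Y_{s-})Z_{s-}\,dB_s$. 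Subtracting the $P$-local martingale $Z+A^Z$ and dividing by $\varepsilon$, we obtain that
\[
   \mathcal{L}:=ZY+\int_0^\cdot Y_{s-}\,dA^Z_s
\]
is a $P$-local martingale with $\mathcal{L}_0=0$.

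\textbf{Forward direction: passing to $Q$.} Let $(\tau_n)$ reduce the martingale part of $Z$. Corollary~\ref{cor:yoeurps formula 2} applied to $Y$ yields $E_Q(Y^{T-}_{\tau_n})=E_P\big(Y_{\tau_n}Z_{\tau_n}+\int_0^{\tau_n}Y_{s-}\,dA^Z_s\big)=E_P(\mathcal{L}_{\tau_n})$, and since $Y$ is bounded while $(M^Z)^{\tau_n}$ is uniformly integrable and $E_P(A^Z_{\tau_n})\le E_P(Z_0)<\infty$, the stopped process $\mathcal{L}^{\tau_n}$ is of class (D), hence a genuine martingale, so $E_Q(Y^{T-}_{\tau_n})=0$. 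Repeating the computation with $Y$ replaced by the increment processes $1_F\,1_{(t,\infty)}(\cdot)\big((S^i-S_0^i)^{\sigma_k\wedge\rho}_\cdot-(S^i-S_0^i)^{\sigma_k\wedge\rho}_t\big)$ for $F\in\F_t$ and bounded stopping times $\rho\ge t$ (each of these is again of the form $\bar H\cdot S$ for a bounded predictable $\bar H$, so the same drift cancellation applies) promotes this to the statement that the bounded process $\big((S^i)^{T-}\big)^{\sigma_k}$ is a $Q$-martingale. Since $S^{T-}$ is continuous, its level-crossing times tend to infinity $Q$-a.s., and letting $k\to\infty$ shows that $(S^i)^{T-}$, and hence $S^{T-}$, is a $Q$-local martingale.

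\textbf{Converse.} Here the plan is to carry out the computation~\eqref{eq:supermartingale density derivation} rigorously, with $S^{T-}$ under $Q$ in place of $S$. Let $\gamma$ be a right-continuous version of the density process $\gamma_t=dP/dQ|_{\F_t}$, $T=\inf\{t:\gamma_t=0\}$, and $Z_t=1_{\{t<T\}}/\gamma_t$; then $(Z,T)$ is the Kunita--Yoeurp decomposition of $Q$ with respect to $P$, $Z$ is a nonnegative a.s.\ c\`adl\`ag $P$-supermartingale, and $Z_\infty>0$ $P$-a.s., because $\gamma_\infty>0$ $P$-a.s.\ (as $P\ll Q$) and $T=\infty$ $P$-a.s. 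It remains to show that $Z(1+H\cdot S)$ is a $P$-supermartingale for every $1$-admissible $H$ whose wealth converges. The key point is that any such $H$ is $1$-admissible for $S^{T-}$ under $Q$: on $[0,T)$ the $Q$-dynamics of $S^{T-}$ are governed by the $P$-dynamics of $S$ through $Q(\,\cdot\,\cap\{T>t\})\ll P|_{\F_t}$, so $H\cdot S^{T-}$ is well defined under $Q$ and agrees there with $H\cdot S$, while $S^{T-}$ is constant on $[T,\infty)$; moreover $(H\cdot S^{T-})_t\ge-1$ $Q$-a.s., so $1+H\cdot S^{T-}$ is a nonnegative $Q$-local martingale, hence a $Q$-supermartingale. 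Using $1_{\{t\ge T\}}Z_t=0$ together with $E_Q(1_{\{t<T\}}W)=E_P(Z_tW)$ from~\eqref{eq: kunita decomposition}, the chain of (in)equalities in~\eqref{eq:supermartingale density derivation} — now carried out with $H\cdot S^{T-}$ inside the $Q$-expectations — gives $E_P\big(1_A Z_{t+s}(1+(H\cdot S)_{t+s})\big)\le E_P\big(1_A Z_t(1+(H\cdot S)_t)\big)$ for all $A\in\F_t$; taking $A=\Omega$ and iterating from $t=0$ bounds these expectations by $E_P(Z_0)<\infty$, so $Z(1+H\cdot S)$ is a nonnegative $P$-supermartingale. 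Hence $Z$ is a supermartingale density for $\W_1$ in the sense of Definition~\ref{def:supermartingale density}.

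\textbf{Expected main obstacle.} Two of the steps above are genuinely delicate. In the forward direction, the reducing sequence $(\tau_n)$ of the martingale part of $Z$ tends to infinity only $P$-a.s., and it must be traded for a localizing sequence that tends to infinity $Q$-a.s.; this is where the continuity of $S^{T-}$ and the control of $Q(\,\cdot\,\cap\{T>t\})$ by $P$ on $\F_t$ become essential. In the converse, the crux is transferring the stochastic integral $H\cdot S$ from $P$ to the dominating measure $Q$: absolute continuity points the ``wrong way'' ($P\ll Q$, not $Q\ll P$), so no Girsanov-type stability result applies directly and one must argue by hand that $H$ is $S^{T-}$-integrable under $Q$ — which is precisely the reason the statement is formulated for $S^{T-}$ rather than for $S$.
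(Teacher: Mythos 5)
Your overall plan matches the paper's: lean on Lemma~\ref{lem: structure condition for predictable S} and the integration-by-parts computation~\eqref{eq:second ibp supermartingale density} to show that the drift cancels, transfer to $Q$ via Corollary~\ref{cor:yoeurps formula 2}, and establish the converse by running~\eqref{eq:supermartingale density derivation} with $S^{T-}$ under $Q$. The converse direction is essentially the paper's argument (the paper uses the stopping time $\tau=\inf\{t:(H\cdot S^{T-})_t<-1\}$ and \eqref{eq: kunita decomposition stopping time} to show $H$ is $1$-admissible for $S^{T-}$ under $Q$; you phrase the same thing more informally). Your identification of $\mathcal{L}=ZY+\int Y_{s-}\,dA^Z_s$ as a $P$-local martingale, and the class (D) check for $\mathcal{L}^{\tau_n}$, are both correct.

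However, there is a genuine gap in the forward direction, and it sits exactly where you flag the ``expected main obstacle'' — you name the difficulty but do not resolve it. After obtaining that $\big((S^i)^{T-}\big)^{\sigma_k}$ is a $Q$-martingale (which itself is only gestured at via the ``increment processes''), you need a localizing sequence that converges to infinity \emph{$Q$-a.s.}; but $\sigma_k$ is a $P$-localizing sequence for $S-S_0$, and $P\ll Q$ gives you nothing in the direction you need. Your remedy — ``Since $S^{T-}$ is continuous, its level-crossing times tend to infinity $Q$-a.s.'' — is an assertion, not an argument, and moreover it concerns the level-crossing times $\rho_n=\inf\{t:|S^{T-}_t|\ge n\}$, which are not the $\sigma_k$ you actually stop with. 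Continuity of $S$ is only a $P$-a.s.\ property, so under $Q$ the paths of $S^{T-}$ need not be continuous, and the claim cannot be read off directly. The paper handles this carefully: it shows $Q(\sup_n\rho_n<T)=0$ via~\eqref{eq: kunita decomposition stopping time} together with $P(\sup_n\rho_n<\infty)=0$, then uses that $S^{T-}$ is constant after $T$ to conclude $\{\sup_n\rho_n\ge T\}\subseteq\{\sup_n\rho_n=\infty\}$ $Q$-a.s., and finally upgrades the martingale property from $(S^{T-})^{\rho_n\wedge\tau_m}$ to $(S^{T-})^{\rho_n}$ by a bounded-convergence argument in $m$, before letting $n\to\infty$. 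This two-step passage is what turns a $P$-localizing sequence into a $Q$-localizing one, and it is missing from your proof.

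One smaller stylistic difference: the paper works directly with an arbitrary $1$-admissible strategy $H$ under $Q$ and observes that $(1+(H\cdot S))Z+\int(1+(H\cdot S)_{s-})\,dA^Z_s$ is a nonnegative $P$-local martingale with expectation $1$, whence $E_Q(1+(H\cdot S^{T-})_{\tau_n})\le 1$; the (local) martingale property of $(S^{T-})^{\tau_n}$ then follows from this inequality holding for both $H$ and $-H$ with small enough sup norm. This is cleaner than your route through specific increment processes, and dovetails with the localization scheme above. I would recommend adopting the paper's choice of integrands and then filling the localization gap as described.
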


\begin{proof}
   We first show that $S^{T-}$ is $Q$-a.s. locally bounded. Let $\tilde{\rho}_n = \inf \{ t \ge 0: |S^{T-}_t| \ge n\}$. Since $S^{T-}$ was only required to be right-continuous $P$-a.s. and not identically, $\tilde{\rho}_n$ is not necessarily a stopping time. It is however a $(\F^Q_t)$-stopping time. By Lemma \ref{lem:complete stopping time}, we can find a stopping time $\rho_n$ such that $Q(\rho_n = \tilde{\rho}_n)=1$. Then $\sup_n \rho_n$ is a stopping time, and \eqref{eq: kunita decomposition stopping time} implies that
   \begin{align}\label{eq:predictable supermartingale density pr1}
      Q\left(\sup_n \rho_n < T\right) = E_P\left( Z_{\sup_n \rho_n} 1_{\{\sup_n \rho_n < \infty\}} \right) = 0,
   \end{align}
   because $P(\sup_n \rho_n < \infty) = 0$. But $S^{T-}_t$ is constant for $t \ge T$, and therefore $\{\sup_n \rho_n \ge T\}$ is $Q$-a.s. contained in $\{\sup_n \rho_n = \infty\}$, showing that $S^{T-}$ is $Q$-a.s. locally bounded.
   
   Let $(\sigma_n)$ be a localizing sequence for $M$, where $Z = Z_0 + M - A$. We define $\tau_n = \rho_n \wedge \sigma_n$. Let $H$ be a strategy that is 1-admissible under $Q$. Then we can apply Corollary \ref{cor:yoeurps formula 2} (which of course extends from bounded $Y$ to nonnegative $Y$), to obtain
   \begin{align*}
      E_Q(1+(H\cdot S^{T-})_{\tau_n}) = E_P\left( (1+(H\cdot S)_{\tau_n}) Z_{\tau_n} + \int_0^{\tau_n} (1+(H\cdot S)_{s-})dAs\right).
   \end{align*}
   But now \eqref{eq:second ibp supermartingale density} and \eqref{eq:structure condition} imply that $(1+(H\cdot S)) Z + \int (1+(H\cdot S)_{s-})dAs$ is a nonnegative $P$-local martingale starting in 1, and therefore $E_Q(1+(H\cdot S^{T-})_{\tau_n}) \le 1$. Since $(S^{T-})^{\tau_n}$ is bounded, it must be a martingale.
   
   The only remaining problem is that we only know $Q(\sup_n \tau_n \ge T) = 1$ and not $Q(\sup_n \tau_n = \infty) = 1$. But in fact the same arguments also show that $(S^{T-})^{\rho_n\wedge \tau_m}$ is a martingale for all $n,m \in \N$. Therefore we can apply bounded convergence to obtain for all $s,t \ge 0$
   \begin{align*}
      E_Q( (S^{T-})^{\rho_n}_{t+s} | \F_t) = \lim_{m \rightarrow \infty} E_Q( (S^{T-})^{\rho_n \wedge \tau_m}_{t+s} | \F_t) = \lim_{m \rightarrow \infty} (S^{T-})^{\rho_n \wedge \tau_m}_{t} = (S^{T-})^{\rho_n}_{t}
   \end{align*}
   As we argued above, $Q(\sup_n \rho_n = \infty) = 1$, and therefore $S^{T-}$ is a $Q$-local martingale.
   
   Conversely, let $S^{T-}$ be a $Q$-local martingale, and let $H$ be a 1-admissible strategy for $S$ under $P$. Define $\tau = \inf\{t \ge 0: (H \cdot S^{T-})_t < -1 \}$. Then $P(\tau < \infty) = 0$ and therefore $Q(\tau < T) = 0$ by the same argument as in \eqref{eq:predictable supermartingale density pr1}. In particular $H$ is 1-admissible for $S^{T-}$ under $Q$. Now we can repeat the arguments in \eqref{eq:supermartingale density derivation}, to obtain that ${Z}_t = 1_{\{ t < T\}}/\gamma_t$ is a supermartingale density for $S$, where we denoted ${\gamma}_t = (dP/dQ)|_{\F_t}$.
\end{proof}

\begin{rmk}
   Note that we only used the predictability of $S$ once: it was only needed to obtain
   \begin{align*}
      E_P\left( (1+(H\cdot S)_{\tau_n}) Z_{\tau_n} + \int_0^{\tau_n} (1+(H\cdot S)_{s-})dAs\right) \le 1,
   \end{align*}
   for which we applied (results from the proof of) Lemma \ref{lem: structure condition for predictable S}.
\end{rmk}

\begin{cor}[``Predictable weak fundamental theorem of asset pricing'']\label{thm:predictable ftap}
   Let $(\F_t)$ be the right-continuous modification of a standard system. Let $S$ be a predictable stochastic process that is a.s. right-continuous. Then $S$ satisfies (NA1$_s$) if and and only there exists an enlarged probability space $(\overline{\Omega}, \overline{\F}, (\overline{\F}_t), \overline{P})$ and a dominating measure $\overline{Q} \gg \overline{P}$ with Kunita-Yoeurp decomposition $(\overline{Z}, \overline{T})$ with respect to $\overline{P}$, such that $\overline{S}^{\overline{T}-}$ is a $\overline{Q}$-local martingale.
\end{cor}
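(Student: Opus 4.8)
The statement is essentially a repackaging of Corollary~\ref{cor:NA1 implies semimartingale}, Theorem~\ref{thm: ex supermartingale density}, the F\"ollmer--Yoeurp construction of Section~\ref{subsec: Kunita-Yoeurp}, and Theorem~\ref{thm:predictable supermartingale density}, and I would prove it by chaining these together. For the forward implication, assume $S$ satisfies (NA1$_s$). Predictability and a.s.\ right-continuity give that $S - S_0$ is a.s.\ locally bounded (by I.2.16 of \cite{Jacod2003}), and since the $\F_0$-measurable, time-constant process $S_0$ enters no stochastic integral, $S$ and $S-S_0$ share the classes $\W_{1,s}$ and $\W_1$, so $S - S_0$ also satisfies (NA1$_s$). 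Corollary~\ref{cor:NA1 implies semimartingale} then shows that $S$ is a predictable semimartingale satisfying (NA1) for which every supermartingale density for $\W_{1,s}$ is one for $\W_1$, and Theorem~\ref{thm: ex supermartingale density} supplies such a density $Z$; since $Z$ is an integrable nonnegative supermartingale with $Z_\infty>0$ a.s.\ we have $0<E_P(Z_0)<\infty$, so after rescaling $E_P(Z_0)=1$. Now I would use the hypothesis that $(\F_t)$ is the right-continuous modification of a standard system to run the construction of Section~\ref{subsec: Kunita-Yoeurp}: pass to $\overline{\Omega}=\Omega\times[0,\infty]$, $\overline{P}=P\otimes\delta_\infty$, put $\overline{T}(\omega,\zeta)=\zeta$, and let $\overline{Q}$ be an extension of the F\"ollmer measure of the embedded process $\overline{Z}$, so that $(\overline{Z},\overline{T})$ is the Kunita--Yoeurp decomposition of $\overline{Q}$ with respect to $\overline{P}$ and Lemma~\ref{lem:absolute continuity} (using $\overline{P}(\overline{Z}_\infty>0)=1$) gives $\overline{Q}\gg\overline{P}$. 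On the enlarged space $\overline{S}$ is still a predictable semimartingale and $\overline{Z}$ still a supermartingale density for it (the new coordinate is $\overline{P}$-a.s.\ constant, so modulo $\overline{P}$-null sets nothing changes; cf.\ Section~\ref{sec: supermartingale densities} and Appendix~\ref{app:complete filtration}), so Theorem~\ref{thm:predictable supermartingale density} applies on $(\overline{\Omega},\overline{\F},(\overline{\F}_t),\overline{P})$ and yields that $\overline{S}^{\overline{T}-}$ is a $\overline{Q}$-local martingale.

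For the converse, suppose $\overline{Q}\gg\overline{P}$ on some enlargement has Kunita--Yoeurp decomposition $(\overline{Z},\overline{T})$ with $\overline{S}^{\overline{T}-}$ a $\overline{Q}$-local martingale. Since (NA1$_s$) for $S$ under $P$ is equivalent to (NA1$_s$) for $\overline{S}$ under $\overline{P}$ (simple strategies and their terminal wealths lift with unchanged law because $\overline{P}=P\otimes\delta_\infty$), it suffices to bound $\K_{1,s}$ for $\overline{S}$ in $\overline{P}$-probability. Given a simple $1$-admissible $H$ for $\overline{S}$ under $\overline{P}$, I would first verify that $H$ is $1$-admissible for $\overline{S}^{\overline{T}-}$ under $\overline{Q}$: the entrance time $\tau$ of $H\cdot\overline{S}^{\overline{T}-}$ into $(-\infty,-1)$ is $\overline{P}$-a.s.\ infinite since $\overline{S}^{\overline{T}-}=\overline{S}$ on the $\overline{P}$-full set $\{\overline{T}=\infty\}$, whence $\overline{Q}(\tau<\overline{T})=E_{\overline{P}}(\overline{Z}_\tau 1_{\{\tau<\infty\}})=0$ by \eqref{eq: kunita decomposition stopping time}, while $\overline{S}^{\overline{T}-}$, hence $H\cdot\overline{S}^{\overline{T}-}$, has no jump at $\overline{T}$ and is constant afterwards, so $\{\tau\ge\overline{T}\}\subseteq\{\tau=\infty\}$ up to $\overline{Q}$-null sets. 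Then $1+H\cdot\overline{S}^{\overline{T}-}$ is a nonnegative $\overline{Q}$-supermartingale, and repeating the computation \eqref{eq:supermartingale density derivation} exactly as in the converse part of Theorem~\ref{thm:predictable supermartingale density} shows that $\overline{Z}(1+H\cdot\overline{S}^{\overline{T}-})$ is a nonnegative $\overline{P}$-supermartingale; comparing its values at $0$ and $\infty$ and using $\overline{S}^{\overline{T}-}=\overline{S}$ $\overline{P}$-a.s.\ yields $E_{\overline{P}}(\overline{Z}_\infty(1+(H\cdot\overline{S})_\infty))\le E_{\overline{P}}(\overline{Z}_0)=1$. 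Since $\overline{Z}_\infty>0$ $\overline{P}$-a.s.\ and $\K_{1,s}$ for $\overline{S}$ is convex, Lemma~\ref{prop: l0-bded iff Z ex} gives that it is bounded in $\overline{P}$-probability, i.e.\ (NA1$_s$) holds.

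The genuine content is all in the cited results; the only delicate points in the assembly are (i) the bookkeeping that the ``trivial'' enlargement $\Omega\times[0,\infty]$ preserves predictability, the semimartingale property, the supermartingale-density property and (NA1$_s$) --- precisely the robustness the paper's systematic use of non-complete filtrations is designed to make painless --- and (ii) in the converse, transferring $1$-admissibility of a simple strategy from $(P,S)$ to $(\overline{Q},\overline{S}^{\overline{T}-})$ across $\overline{T}$, where \eqref{eq: kunita decomposition stopping time} and the absence of a jump of $\overline{S}^{\overline{T}-}$ at $\overline{T}$ do the work.
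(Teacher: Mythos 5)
Your proposal is correct and follows essentially the same route as the paper: chain Corollary~\ref{cor:NA1 implies semimartingale}, Theorem~\ref{thm: ex supermartingale density}, the F\"ollmer--Yoeurp construction of Section~\ref{subsec: Kunita-Yoeurp}, and Theorem~\ref{thm:predictable supermartingale density} for the forward direction, and use the dominating measure to produce a supermartingale density and hence (NA1$_s$) in the reverse direction. The paper's printed proof is much terser --- for the converse it simply invokes the ``conversely'' part of Theorem~\ref{thm:predictable supermartingale density} together with Theorem~\ref{thm: ex supermartingale density} on the enlarged space and then transfers (NA1) back along the enlargement map --- whereas you unpack that citation into the explicit \eqref{eq:supermartingale density derivation}-style computation restricted to simple strategies, which has the minor advantage of sidestepping the question of whether $\overline{S}$ is a priori a $\overline{P}$-semimartingale (you only ever integrate simple strategies, so $H\cdot\overline{S}^{\overline{T}-}$ is defined pathwise); the paper instead gets the semimartingale property for free from $\overline{P}\ll\overline{Q}$ and Girsanov before applying Theorem~\ref{thm:predictable supermartingale density}. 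Your bookkeeping of the enlargement $(\Omega,\F,(\F_t),P)\hookrightarrow(\overline{\Omega},\overline{\F},(\overline{\F}_t),\overline{P})$, which the paper leaves implicit, is accurate, and the $S\mapsto S-S_0$ reduction to reach local boundedness is the same device the paper uses in the remark preceding Lemma~\ref{lem: structure condition for predictable S}. No gaps.
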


\begin{proof}
   It remains to be shown that the existence of $\overline{Q}$ implies that $S$ satisfies (NA1$_s$). But if $\overline{Q}$ exists, then Theorem \ref{thm:predictable supermartingale density} and Theorem \ref{thm: ex supermartingale density} imply that $\overline{S}$ satisfies (NA1) on $(\overline{\Omega}, \overline{\F},$ $(\overline{\F}_t), \overline{P})$. Since this is an enlargement of $(\Omega, \F, (\F_t), P)$, the process $S$ must also satisfies (NA1).
\end{proof}

\begin{rmk}
   We argued above that a predictable process satisfying (NA1) must be continuous. Therefore Corollary \ref{thm:predictable ftap} is not much more general than Ruf \cite{Ruf2010}, where it is shown that a diffusion $S$ that satisfies (NA1) admits a dominating measure $Q$ under which $S^{T-}$ is a local martingale. However one difference is that \cite{Ruf2010} only shows that supermartingale densities that are local martingales correspond to dominating local martingale measures. Here we show that in the predictable case this is in fact true for all supermartingale densities. Also, we show equivalence between (NA1) and the existence of a dominating local martingale measure, and not only that (NA1) implies the existence of $Q$. Of course, as is usually the case for this type of result, the reverse direction is much easier.
\end{rmk}

\subsection{The general case}

We start the treatment of the non-predictable case with two examples that illustrate why it is natural to consider dominating local martingale measures for $S^{T-}$ rather than for $S$.

\begin{ex}
   If $S$ is optional and if $Q$ is a dominating local martingale measure for $S$ rather than for $S^{T-}$, then $S$ does not need to satisfy (NA1): Let $T$ be exponentially distributed with parameter 1 under $Q$. Define $S_t = e^t 1_{\{t < T\}}$ for $t \in [0,1]$. Since time is finite, $S$ is a uniformly integrable martingale. Therefore $dP = S_1 dQ$ is absolutely continuous with respect to $Q$. But under $P$ we have $S_t = e^t$ for all $t \in [0,1]$. So clearly $S$ does not satisfy (NA1) under $P$, although $Q$ is a dominating martingale measure for $S$. Of course $S^{T-}$ is \emph{not} a local martingale under $Q$, because $S^{T-}_t = e^t$.
\end{ex}

Recall that a stopping time $\tau$ is called \emph{foretellable} under a probability measure $P$ if there exists an increasing sequence $(\tau_n)$ of stopping times, such that $P(\tau_n < \tau)=1$ for every $n$, and such that $P(\sup_n \tau_n = \tau)=1$. In this case $(\tau_n)$ is called an \emph{announcing sequence} for $\tau$. Every predictable time is foretellable under any probability measure, see Theorem I.2.15 and Remark I.2.16 of \cite{Jacod2003}.

\begin{ex}
   Let $S$ be a semimartingale under $P$ and let $Q \gg P$ be a dominating measure with Kunita-Yoeurp decomposition $(Z,T)$ wrt $P$. Assume that $T$ is not foretellable under $Q$. Then there exists an adapted process $\tilde{S}$ which is $P$-indistinguishable from $S$, such that $\tilde{S}$ is not a $Q$-local martingale: Let $x \in \R^d$ and define $\tilde{S}^x_t = S_t 1_{\{t < T\}} + x 1_{\{t \ge T\}}$, which is $P$-indistinguishable from $S$ since $P(T=\infty)=1$. If $\tilde{S}^x$ is a $Q$-local martingale, then we can take the localizing sequence $\tau^x_n = \inf\{t \ge 0: |\tilde{S}^x_t| \ge n\}$. 
   Since $T$ is not foretellable under $Q$ and since by the same argument as in \eqref{eq: kunita decomposition stopping time} we have $Q(\lim_{n \rightarrow \infty} \tau^x_n \ge T)=1$, there must exist $n \in \N$ for which $Q(\tau^x_n = T) > 0$. Moreover $\tau^x_n = \tau^y_n$ for all $|x|<n$, $|y| < n$, and therefore
   \begin{align*}
       E_Q(S_0) = E_Q(\tilde{S}^x_{\tau^x_n}) = E_Q(S_{\tau^x_n} 1_{\{\tau^x_n < T\}}) + x Q(\tau^x_n \ge T).
   \end{align*}
   We obtain a contraction by letting $x$ vary through the ball of radius $n-1$.
\end{ex}

These two examples show that given $Q \gg P$, it is important to choose a good version of $S$ if we want to obtain a $Q$-local martingale. All the results obtained so far indicate that this good version should be $S^{T-}$. Maybe somewhat surprisingly, this is not true in general, as we demonstrate in the following example.

\begin{ex}
   Let $(L_t)_{t \in [0,1]}$ be a L\'evy process under $Q$, with jump measure $\nu = \delta_1 + \delta_{-1}$ and drift $b \in \R$. To wit, $L_t = N^1_t - N^2_t + b t$, where $N^1$ and $N^2$ are independent Poisson processes. Let $a > |b|$ and let $\tau$ be an exponential random variable with parameter $a$, such that $\tau$ is independent from $L$. Define $T = \tau$ if $\tau \le 1$, and $T = \infty$ otherwise. Then $(e^{a t} 1_{\{t < T\}})_{t \in [0,1]}$ is a uniformly integrable martingale, and therefore it defines a probability measure $dP = e^a 1_{\{1 < T\}}dQ$. Since $T$ and $L$ are independent, $L$ has the same distribution under $P$ as under $Q$. The Kunita-Yoeurp decomposition of $Q$ wrt $P$ is given by $((e^{-a t})_{t \in [0,1]}, T)$.
   
   We claim that $Z = e^{-a \cdot}$ is a supermartingale density for $L$. Let $(\pi_t W^\pi_{t-})$ be a strategy for $L$, where $W^\pi$ is the wealth process obtained by investing in this strategy. This strategy is $1$-admissible if and only if $|\pi_t| \le 1$ for all $t \in [0,1]$. Moreover we get from \eqref{eq:ibp supermartingale density} that
   \begin{align*}
      d(Z W^\pi)_t \sim - W^\pi_{t-} Z_{t-} a dt + Z_{t-} \pi_t W^\pi_{t-} b dt = W^\pi_{t-} Z_{t-} (\pi_t b - a) dt.
   \end{align*}
   Since $W^\pi Z \ge 0$ and since $\pi_t b - a < 0$ (recall that $a > |b|$), the drift rate is negative. Therefore $Z W^\pi$ is a local supermartingale, and since it is a nonnegative process, it is a supermartingale.
   
   Now $T$ is independent from $L$ under $Q$, and $L$ has no fixed jump times. Hence
   \begin{align*}
      Q(\Delta L_T \neq 0, T < \infty) = \int_{[0,\infty)} Q(\Delta L_t \neq 0) (Q\circ T^{-1})(dt) = 0,
   \end{align*}
   which implies that $L^{T-} = L^T$, and this is clearly no $Q$-local martingale.
\end{ex}

\begin{rmk}
   In the preceding example it is possible to show that the modified process 
   \begin{align}\label{eq:Levy counterexample}
      \tilde{L}_t = L^{T-}_t - \frac{b}{a} 1_{\{t \ge T\}}
   \end{align}
   is a $Q$-martingale.
   
   More generally we expect that given a semimartingale $S$, a supermartingale density $Z$ for $S$, and a measure $Q \gg P$ with Kunita-Yoeurp decomposition $(Z,T)$ wrt $P$, there should always exist a version $\tilde{S}$ that is $P$-indistinguishable from $S$, such that $\tilde{S}$ is a $Q$-local martingale. But as \eqref{eq:Levy counterexample} shows, we will need to take different $\tilde{S}$ for different supermartingale densities. Therefore this approach seems somewhat unnatural, and we will not pursue it further. We rather note that there exists a subclass of supermartingale densities that turn $S^{T-}$ into a local martingale.
\end{rmk}


Note that all three examples had one thing in common: $T$ was not foretellable under $Q$. It turns out that if we assume $T$ to be foretellable, then things get much simpler. But it is well known, and easy to see, that $T$ is foretellable under $Q$ if and only if $Z$ is a $P$-local martingale, see \cite{Follmer1972}, Proposition~(2.1) or \cite{Yoeurp1985}, Theorem 6.


   

Therefore we should look for supermartingale densities that are local martingales. We call these supermartingale densities \emph{local martingale densities}. In the case of a one-dimensional $(S_t)_{t \in [0,T_\infty]}$ with finite terminal time $T_\infty$, it is shown by Kardaras \cite{Kardaras2009}, Theorem 1.1, that local martingale densities exist if and only if (NA1) is satisfied. The proof is in the spirit of the article \cite{Karatzas2007}. Takaoka \cite{Takaoka2012} solves the $d$-dimensional case with finite terminal time. More precisely it is easily deduced from Remark 7 of \cite{Takaoka2012} that for locally bounded $d$-dimensional semimartingale $(S_t)_{t \in [0, T_\infty]}$, (NA1) is satisfied if and only if there exists a local martingale density. Takoaka's proof is based on the insight of Delbaen and Schachermayer \cite{Delbaen1995b}, that a change of num\'{e}raire can induce the (NA) property, even if previously there were arbitrage opportunities in the market. \cite{Takaoka2012} continues to show that a clever choice of num\'{e}raire preserves the (NA1) property, so that then the condition (NA) + (NA1) = (NFLVR) is satisfied, which permits to apply the fundamental theorem of asset pricing \cite{Delbaen1994}.

Of course both \cite{Kardaras2009} and \cite{Takaoka2012} work with complete filtrations, but given a local martingale density $\tilde{Z}$ that is $(\F^P_t)$-adapted, there exists an indistinguishable process $Z$ that is $(\F_t)$-adapted, see Lemma \ref{lem:complete predictable optional}.

%

\begin{lem}
   Let $(S_t)_{t \in [0,T_\infty]}$ be a locally bounded semimartingale on a finite time horizon $T_\infty < \infty$, and let $Z$ be a local martingale density for $S$. Let $T$ be a stopping time and $Q$ be a probability measure, such that  $(Z/E_P(Z_0),T)$ is the Kunita-Yoeurp decomposition of $Q$ wrt $P$. Then $S^{T-}$ is a $Q$-local martingale.
   
   Conversely if $Q \gg P$ with Kunita-Yoeurp decomposition $(Z,T)$ wrt $P$, and if $S^{T-}$ is a local martingale under $Q$, then $Z$ is a supermartingale density for $S$.
\end{lem}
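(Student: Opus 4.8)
The plan is to mirror the proof of the Corollary to Theorem~\ref{thm:predictable supermartingale density}, the decisive simplification being that a local martingale density has vanishing compensator. Normalize $\tilde Z := Z/E_P(Z_0)$ and write $\tilde Z = \tilde Z_0 + M - A$ for its Doob--Meyer decomposition; since $Z$, and hence $\tilde Z$, is a $P$-local martingale, $A \equiv 0$. Consequently Corollary~\ref{cor:yoeurps formula 2}, applied to $\tilde Z$, collapses to
\begin{align*}
   E_Q\!\left(Y^{T-}_{\tau_n}\right) = E_P\!\left(Y_{\tau_n}\,\tilde Z_{\tau_n}\right)
\end{align*}
for every nonnegative adapted, $P$-a.s.\ c\`adl\`ag process $Y$ and every localizing sequence $(\tau_n)$ of $M$. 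This is exactly what makes the structure-condition analysis of Lemma~\ref{lem: structure condition for predictable S}---and with it the predictability of $S$---superfluous here: the term $\int_0^{\tau_n}(1+(H\cdot S)_{s-})\,dA_s$ that in the predictable case forced us to control the finite-variation part of $(1+(H\cdot S))Z$ has disappeared, and only the supermartingale-density property of $Z$ survives. (Equivalently, $Z$ being a $P$-local martingale means that $T$ is foretellable under $Q$, cf.\ \cite{Follmer1972}, Proposition~(2.1), or \cite{Yoeurp1985}, Theorem~6, which is what rules out the pathologies of the three preceding examples.)

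For the forward implication one first argues, exactly as in the proof of the Corollary to Theorem~\ref{thm:predictable supermartingale density}, that $S^{T-}$ is $Q$-a.s.\ locally bounded: the entrance times $\tilde\rho_n = \inf\{t : |S^{T-}_t| \ge n\}$ are $(\F^Q_t)$-stopping times, hence $Q$-a.s.\ equal to $(\F_t)$-stopping times $\rho_n$ by Lemma~\ref{lem:complete stopping time}; since $S$ is $P$-a.s.\ c\`adl\`ag on the compact interval $[0,T_\infty]$, \eqref{eq: kunita decomposition stopping time} yields $Q(\sup_n\rho_n < T) = E_P(Z_{\sup_n\rho_n}1_{\{\sup_n\rho_n<\infty\}}) = 0$, and $S^{T-}$ is constant after $T$. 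Local boundedness of $S$ provides a localizing sequence $(\beta_n)$ with $|S^{\beta_n}| \le c_n$, and since $S = S^{T-}$ on $[0,T)$ and $S^{T-}$ is constant afterwards one also has $|(S^{T-})^{\beta_n}| \le c_n$. Let $(\sigma_n)$ localize $M$ and set $\tau_n = \rho_n\wedge\sigma_n\wedge\beta_n$. For $H$ that is $1$-admissible for $S^{T-}$ under $Q$, Corollary~\ref{cor:yoeurps formula 2} applied to $Y = 1 + H\cdot S$ (noting that $Y^{T-} = 1 + H\cdot S^{T-} \ge 0$) together with $A\equiv 0$ gives
\begin{align*}
   E_Q\!\left(1 + (H\cdot S^{T-})_{\tau_n}\right) = E_P\!\left((1+(H\cdot S)_{\tau_n})\,\tilde Z_{\tau_n}\right) \le E_P(\tilde Z_0) = 1,
\end{align*}
the inequality being just the $P$-supermartingale property of $(1+H\cdot S)\tilde Z$. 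Testing with the suitably scaled, $1$-admissible simple strategies $H = \pm\,1_A\,1_{(s\wedge\tau_n,\,t\wedge\tau_n]}\,e_i$ and using that $(S^{T-})^{\tau_n}$ is (deterministically) bounded, one concludes that $(S^{T-})^{\tau_n}$ is a $Q$-martingale for every $n$.

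The localizations are patched together exactly as in the predictable case: the same computation shows that $(S^{T-})^{\rho_n\wedge\sigma_m\wedge\beta_m}$ is a $Q$-martingale for all $m$, and letting $m\to\infty$ by bounded convergence gives that $(S^{T-})^{\rho_n}$ is a $Q$-martingale; since on the finite horizon $[0,T_\infty]$ the $Q$-a.s.\ c\`adl\`ag process $S^{T-}$ is $Q$-a.s.\ bounded, we have $\rho_n = T_\infty$ for all large $n$ $Q$-a.s., so $S^{T-}$ is a $Q$-local martingale. The converse implication is identical to that of the Corollary to Theorem~\ref{thm:predictable supermartingale density}: given a $1$-admissible $H$ for $S$ under $P$, the time $\tau = \inf\{t : (H\cdot S^{T-})_t < -1\}$ satisfies $P(\tau<\infty)=0$, hence $Q(\tau<T)=0$ by \eqref{eq: kunita decomposition stopping time}, so $H$ is $1$-admissible for $S^{T-}$ under $Q$; repeating the arguments of \eqref{eq:supermartingale density derivation} then shows that $Z_t = 1_{\{t<T\}}/\gamma_t$, with $\gamma_t = dP/dQ|_{\F_t}$, is a supermartingale density for $S$.

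The only genuinely delicate point is the interplay of the various $P$-localizing sequences (for $M$ and for the local boundedness of $S$) with the $Q$-a.s.\ behaviour of $S^{T-}$: one must ensure both that the stopped processes $(S^{T-})^{\tau_n}$ are bona fide bounded (so that two-sided admissible strategies exist and force the martingale property) and that the $\tau_n$ exhaust $[0,T_\infty]$ under $Q$. These are handled just as in the proof of the Corollary to Theorem~\ref{thm:predictable supermartingale density}, using the local boundedness of $S$ and the fact that a $Q$-a.s.\ c\`adl\`ag process on a compact interval is automatically bounded; no new analytic obstacle arises, since Corollary~\ref{cor:yoeurps formula 2} already encapsulates the delicate change-of-measure bookkeeping.
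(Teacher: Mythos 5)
Your proof is correct and follows essentially the same route as the paper's: both isolate the single place where predictability was used in the proof of Theorem~\ref{thm:predictable supermartingale density}, observe that $A\equiv 0$ in the Doob--Meyer decomposition of a local martingale density so that the offending $\int Y_{s-}\,dA_s$ term vanishes from Corollary~\ref{cor:yoeurps formula 2}, and then reuse the localization and converse arguments verbatim. The one place where you are more careful than the paper is in explicitly introducing the localizing sequence $(\beta_n)$ for the local boundedness of $S$ to guarantee that the stopped processes $(S^{T-})^{\tau_n}$ are genuinely bounded (in the predictable setting $S$ is automatically continuous, so the entrance times $\rho_n$ already give this for free, whereas in the merely locally bounded case the jump at $\rho_n$ needs to be controlled); the paper's instruction to ``just copy the proof'' implicitly subsumes this, and your explicit treatment fills that small gap.
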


\begin{proof}
   The proof is very similar to the one of Theorem \ref{thm:predictable supermartingale density}. In that proof we only used the predictability of $S$ once, to obtain $E_Q( (H \cdot S)_{\sigma_n}) \le 0$ for all strategies $H$ that are 1-admissible under $Q$. Here $(\sigma_n)$ was a localizing sequence for $M$ under $P$, where $Z = Z_0 + M - A$.
   
   So let $(\sigma_n)$ be a localizing sequence for the local martingale $Z$ under $P$, and let $H$ be a strategy that is $1$-admissible for $S$ under $Q$ (and then also under $P$). We can apply Lemma \ref{lem:yoeurps formula} with $A=0$, to obtain
   \begin{align*}
      E_Q(1+(H\cdot S)_{\sigma_n}) = E_P((1 + (H\cdot S)_{\sigma_n}) Z_{\sigma_n}) \le 1,
   \end{align*}
   because $Z$ is a supermartingale density. Now we can just copy the proof of Theorem \ref{thm:predictable supermartingale density}. 
\end{proof}

We obtained our main result, a weak fundamental theorem of asset pricing.

\begin{cor*}[Theorem \ref{thm:main result}]
   Let $(\F_t)$ be the right-continuous modification of a standard system. Let $S$ be an a.s. locally bounded stochastic process that is a.s. right-continuous. Then $S$ satisfies (NA1$_s$) if and and only there exists an enlarged probability space $(\overline{\Omega}, \overline{\F}, (\overline{\F}_t), \overline{P})$ and a dominating measure $\overline{Q} \gg \overline{P}$ with Kunita-Yoeurp decomposition $(\overline{Z}, \overline{T})$ with respect to $\overline{P}$, such that $\overline{S}^{\overline{T}-}$ is a $\overline{Q}$-local martingale.
\end{cor*}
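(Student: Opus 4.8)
The plan is to deduce this from the pieces already assembled, treating the predictable and non-predictable cases separately. If $S$ is predictable the statement is exactly Corollary~\ref{thm:predictable ftap} (recall that a predictable, a.s.\ right-continuous process is automatically a.s.\ locally bounded), so I may assume from now on that $S$ is not predictable, whence $T_\infty<\infty$. For the implication ``(NA1$_s$) $\Rightarrow$ existence of $\overline{Q}$'', I would start from Corollary~\ref{cor:NA1 implies semimartingale}: since $S$ is a.s.\ locally bounded it is in particular locally bounded from below, so (NA1$_s$) forces $S$ to be a semimartingale satisfying (NA1). As the horizon $[0,T_\infty]$ is finite, Remark~7 of \cite{Takaoka2012} then provides a local martingale density for $S$, i.e.\ a strictly positive $P$-local martingale $\tilde{Z}$ with $\tilde{Z}_0=1$ which is a supermartingale density for $\W_{1,s}$; by Lemma~\ref{lem:complete predictable optional} I may replace $\tilde{Z}$ by an $(\F_t)$-adapted, $P$-indistinguishable process $Z$, preserving all these properties (in particular $Z_\infty>0$).

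Next I would pass to the enlarged space. Since $(\F_t)$ is the right-continuous modification of a standard system, the construction of Section~\ref{subsec: Kunita-Yoeurp} applies verbatim: form $(\overline{\Omega},\overline{\F},(\overline{\F}_t),\overline{P})$, write $\overline{Z}$ and $\overline{S}$ for the canonical lifts of $Z$ and $S$, set $\overline{T}(\omega,\zeta)=\zeta$, and let $\overline{Q}$ be one of the extensions to $\overline{\F}$ of the F\"ollmer measure $P^{\overline{Z}}$; then $(\overline{Z},\overline{T})$ is the Kunita--Yoeurp decomposition of $\overline{Q}$ with respect to $\overline{P}$, and Lemma~\ref{lem:absolute continuity} gives $\overline{Q}\gg\overline{P}$ because $\overline{P}(\overline{Z}_\infty>0)=1$. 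Since $\overline{P}$ assigns no mass to the adjoined coordinate, $\overline{\F}_t$ agrees $\overline{P}$-a.s.\ with the lift of $\F_t$, so $\overline{Z}$ remains a local martingale density for the a.s.\ locally bounded semimartingale $\overline{S}$ on the enlarged space, and on that space the Kunita--Yoeurp problem is solvable by construction. The first half of the finite-horizon lemma established immediately above (the analogue of Theorem~\ref{thm:predictable supermartingale density} for locally bounded semimartingales admitting a local martingale density) then yields that $\overline{S}^{\overline{T}-}$ is a $\overline{Q}$-local martingale.

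For the converse, suppose a dominating $\overline{Q}\gg\overline{P}$ with Kunita--Yoeurp decomposition $(\overline{Z},\overline{T})$ and with $\overline{S}^{\overline{T}-}$ a $\overline{Q}$-local martingale is given. Then $\overline{S}^{\overline{T}-}$ is a $\overline{Q}$-semimartingale, hence, since $\overline{P}\ll\overline{Q}$, a $\overline{P}$-semimartingale, and because $\overline{P}(\overline{T}=\infty)=1$ the same holds for $\overline{S}$, which is moreover a.s.\ locally bounded. The converse half of the finite-horizon lemma then shows that $\overline{Z}$ is a supermartingale density for $\overline{S}$, and Theorem~\ref{thm: ex supermartingale density} shows that $\overline{S}$ satisfies (NA1$_s$) on $(\overline{\Omega},\overline{\F},(\overline{\F}_t),\overline{P})$. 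Finally, $(\overline{\Omega},\overline{\F},(\overline{\F}_t),\overline{P})$ is an enlargement of $(\Omega,\F,(\F_t),P)$ (see Section~\ref{subsec: Kunita-Yoeurp}) with $\overline{S}$ the lift of $S$, so every $1$-admissible simple strategy for $S$ lifts to a $1$-admissible simple strategy for $\overline{S}$ with identically distributed terminal wealth; hence boundedness in probability of $\K_{1,s}(\overline{S})$ under $\overline{P}$ forces boundedness in probability of $\K_{1,s}(S)$ under $P$, i.e.\ $S$ satisfies (NA1$_s$). This last descent is exactly the argument used in the proof of Corollary~\ref{thm:predictable ftap}.

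I expect the main obstacle to be the bookkeeping in transferring the local-martingale-density property, and the solvability of the Kunita--Yoeurp problem, between the original space and the product enlargement: one must check that $\overline{Z}$ stays a supermartingale density for $\overline{S}$ despite the larger predictable $\sigma$-field, which is precisely where the $\overline{P}$-triviality of the adjoined $[0,\infty]$-coordinate is used, and one must make sure that the finite-horizon existence result of \cite{Takaoka2012} and the $[0,\infty]$-flavoured F\"ollmer construction of Section~\ref{subsec: Kunita-Yoeurp} are glued together consistently on $[0,T_\infty]$. Everything else is a matter of quoting the results assembled above in the correct order.
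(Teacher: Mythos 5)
Your proof is correct and follows essentially the same route the paper intends: the predictable case is Corollary~\ref{thm:predictable ftap}, and in the non-predictable finite-horizon case you combine Corollary~\ref{cor:NA1 implies semimartingale}, Takaoka's existence of a local martingale density, Lemma~\ref{lem:complete predictable optional}, the F\"ollmer/Kunita--Yoeurp construction of Section~\ref{subsec: Kunita-Yoeurp}, and the finite-horizon analogue of Theorem~\ref{thm:predictable supermartingale density}, with the converse descending from the enlargement via Theorem~\ref{thm: ex supermartingale density} exactly as in the proof of Corollary~\ref{thm:predictable ftap}. The paper leaves this corollary without an explicit proof; you have supplied the bookkeeping in the order the authors clearly have in mind.
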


\begin{rmk}
   There is another subclass of supermartingale densities of which one might expect that they correspond to local martingale measures for $S^{T-}$: the maximal elements among the supermartingale densities. A supermartingale density $Z$ is called maximal if it is indistinguishable from any supermartingale density $Y$ that satisfies $Y_t \ge Z_t$ for all $t \ge 0$. If $S$ is not continuous, then some maximal supermartingale densities are supermartingales and not local martingales, see Example 5.1' of \cite{Kramkov1999}. It turns out that such $Z$ will usually \emph{not} correspond to local martingale measures for $S$. Assume for example that we are in the situation described in Theorem 2.2 of \cite{Kramkov1999}, i.e. we have a dual optimizer $Z$ and a primal optimizer $H$ for a certain utility maximization problem. Then point iii) of this theorem states that $(1+ (H\cdot S))Z$ is a uniformly integrable martingale. If we assume now that $Z$ is not a local martingale, as is the case in Example 5.1' of \cite{Kramkov1999}, and if $(\tau_n)$ is a localizing sequence for the martingale part $M$ of $Z = Z_0 + M - A$, then we obtain from Corollary \ref{cor:yoeurps formula 2}
   \begin{align}\label{eq:rmk after main result}\nonumber
      E_Q((1+(H\cdot S^{T-})_{\tau_n}) Z_{\tau_n}) &= E_P((1+(H\cdot S)_{\tau_n})Z_{\tau_n}) + E_P\left( \int_0^{\tau_n} (1+(H\cdot S)_{s-}) dA_s\right) \\
      & = 1 + E_P\left( \int_0^{\tau_n} (1+(H\cdot S)_{s-}) dA_s\right),
   \end{align}
   where we used that $(1 + (H\cdot S))Z$ is a uniformly integrable martingale. Now, since $H$ is optimal, the wealth process $(1+(H\cdot S)_{s-})$ will be strictly positive with positive probability. Since also $dA \neq 0$ with positive probability, the expectation in \eqref{eq:rmk after main result} is strictly positive for large $n$, and therefore $H \cdot S^{T-}$ cannot be a $Q$-supermartingale, i.e. $S^{T-}$ cannot be a $Q$-local martingale.
\end{rmk}

\section{Relation to filtration enlargements}
\label{sec: jacod criterion}

Here we show that Jacod's criterion for initial filtration enlargements is in fact a criterion for the existence of a universal supermartingale density (to be defined below). We also treat general filtration enlargements. We show that if there exists a universal supermartingale density in an enlarged filtration, then a generalized version of Jacod's criterion is satisfied.

\subsection{Jacod's criterion and universal supermartingale densities}

Let $(\Omega, \F, (\F_t), P)$ be a filtered probability space, and let $(\G^0_t)$ be an initial filtration enlargement of $(\F_t)$, meaning that there exists a random variable $L$ such that $\G^0_t = \F_t \vee \sigma(L)$ for every $t \ge 0$. We define the right-continuous regularization of $(\G^0_t)$ by setting $\G_t = \cap_{s > t} \G^0_s$ for all $t \ge 0$.

Recall that Hypoth\`{e}se $(H')$ is satisfied if all $(\F_t)$-semimartingales are $(\G_t)$-semimartingales.

We now give the classical formulation of Jacod's criterion, see \cite{Jacod1985}. For this purpose we need to assume that $L$ takes its values in a Lusin space, which we denote by $(\mathbb{L}, \mathcal{L})$, where $\mathcal{L}$ is the Borel $\sigma$-algebra on $\mathbb{L}$. In particular the regular conditional distributions
\begin{align*}
   P_t(\omega, d\ell) = P(L \in d\ell | \F_t) (\omega)
\end{align*}
exist for all $t \ge 0$. We write $P_L$ for the distribution of $L$. Jacod's criterion states that Hypoth\`{e}se $(H')$ is satisfied as long as almost surely
\begin{align}\label{eq:jacod}
   P_t(\omega, dx) \ll P_L(dx)
\end{align}
 for every $t \ge 0$.

Below we give an alternative proof of this result and we relate it to the existence of a \emph{universal supermartingale density}.

First observe that Hypoth\`{e}se $(H')$ is satisfied if and only if all nonnegative $(\F_t)$-martingales are $(\G_t)$-semimartingales: This follows by decomposing every $(\F_t)$-local martingale into a sum of a locally bounded local martingale and a local martingale of finite variation, and by observing that every bounded process can be made nonnegative by adding a deterministic constant.



\begin{defn}
   Let $(\G_t)$ be a filtration enlargement of $(\F_t)$. Let $Z$ be an adapted process that is a.s. c\`adl\`ag, such that $P(Z_t > 0)=1$ for all $t \ge 0$. Then $Z$ is called \emph{universal supermartingale density} for $(\G_t)$ if $ZM$ is a $(\G_t)$-supermartingale for every nonnegative $(\F_t)$-supermartingale $M$.
\end{defn}

Note that here we do not require $Z_\infty$ to be positive. This is because local semimartingales are semimartingales, and therefore it suffices to verify the $(\G_t)$-semimartingale property of $M$ on $[0,t]$ for every $t \ge 0$. Hence it suffices if $Z_t > 0$ for every $t \ge 0$.

Also note that $ZM$ must be a $(\G_t)$-supermartingale for every nonnegative $(\F_t)$-supermartingale $M$, and not just for nonnegative $(\F_t)$-martingales. 
This has the advantage that now we see immediately that in finite time every process satisfying (NA1) under $(\F_t)$ satisfies also (NA1) under $(\G_t)$: If $Y$ is a $(\F_t)$-supermartingale density for $S$, then $ZY$ is a $(\G_t)$-supermartingale density for $S$.

The first result of this section shows that Jacod's criterion is not so much a criterion for Hypoth\`ese $(H')$ to hold, but rather a criterion for the existence of a universal supermartingale density.
\begin{prop}\label{prop:jacod}
   Let $(\G_t)$ be an initial enlargement of $(\F_t)$ with a random variable $L$ taking its values in a Lusin space. Assume Jacod's criterion \eqref{eq:jacod} is satisfied. Then there exists a universal supermartingale density for $(\G_t)$.
%
%
\end{prop}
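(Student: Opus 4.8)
The plan is to construct the universal supermartingale density explicitly from the conditional densities provided by Jacod's criterion. Since $P_t(\omega,d\ell)\ll P_L(d\ell)$ almost surely for every $t\ge 0$, we can choose, by standard results on the existence of nice versions of conditional densities in a Lusin space (see Jacod \cite{Jacod1985}), a jointly measurable family $p_t(\omega,\ell)=\frac{dP_t(\omega,\cdot)}{dP_L}(\ell)$ such that, for $P_L$-almost every $\ell$, the process $t\mapsto p_t(\cdot,\ell)$ is a nonnegative c\`adl\`ag $(\F_t)$-martingale under $P$, with $E_P(p_t(\cdot,\ell))=1$. The natural candidate is then
\begin{align*}
   Z_t = \frac{1}{p_t(\omega,L(\omega))},
\end{align*}
interpreted as $0$ when the denominator is not strictly positive. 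First I would check that $Z$ is $(\G_t)$-adapted and a.s. c\`adl\`ag (this follows from joint measurability and the c\`adl\`ag property of $p_\cdot(\cdot,\ell)$), and that $P(Z_t>0)=1$ for every fixed $t$: by Jacod's criterion the $(\G_t^0)$-conditional law of $L$ is equivalent to $P_L$, so $p_t(\omega,L(\omega))>0$ a.s.; one passes from $(\G_t^0)$ to $(\G_t)$ using that the martingale $p_\cdot(\cdot,\ell)$ is c\`adl\`ag, hence $p_{t}=p_{t+}$.

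The heart of the argument is the supermartingale computation. Let $M$ be a nonnegative $(\F_t)$-supermartingale. I would first reduce to showing $E_P(Z_{t+s}M_{t+s}\mid\G_t)\le Z_tM_t$ for fixed $s,t\ge 0$, and then reduce further, via a monotone class argument, to checking the defining inequality against test functions of the form $1_A g(L)$ with $A\in\F_t$ and $g\ge 0$ bounded measurable, since $\G_t$ is (up to the right-continuous regularization) generated by such sets. The key identity is the ``conditional expectation under initial enlargement'' formula: for a nonnegative $\F_{t+s}$-measurable random variable $X$,
\begin{align*}
   E_P\bigl(X\, g(L)\bigr) = E_P\Bigl( g(L)\, \frac{E_P(X\, p_{t+s}(\cdot,\ell))\big|_{\ell=L}}{p_{t+s}(\cdot,L)}\,\cdot\, p_{t+s}(\cdot,L)\Bigr),
\end{align*}
or more usefully, the fact that $E_P(X\mid\G_{t+s})$ equals $\bigl(E_P(X p_{t+s}(\cdot,\ell))/p_{t+s}(\cdot,\ell)\bigr)$ evaluated at $\ell=L$. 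Applying this with $X=Z_{t+s}M_{t+s}=M_{t+s}/p_{t+s}(\cdot,L)$ makes the factor $p_{t+s}$ cancel, leaving
\begin{align*}
   E_P\bigl(1_A\, Z_{t+s} M_{t+s}\bigr)
   = \int E_P\bigl(1_A\, M_{t+s}\bigr) P_L(d\ell) \quad\text{(schematically)},
\end{align*}
and the point is that after the cancellation the remaining expression is controlled by the supermartingale property of $M$ under $P$ together with the martingale property of $p_\cdot(\cdot,\ell)$. Concretely: $Z_{t+s}M_{t+s}\cdot p_{t+s}(\cdot,L) = M_{t+s}$, and disintegrating over $L$ turns the $\G_t$-conditional expectation into an integral against $P_L(d\ell)$ of $E_P(1_A M_{t+s} p_{t+s}(\cdot,\ell)\mid\F_t)$ divided by $p_t(\cdot,\ell)$; now $M_{t+s}p_{t+s}(\cdot,\ell)\le$ (in conditional expectation given $\F_t$) a quantity dominated using that $M$ is a supermartingale and $p_\cdot(\cdot,\ell)$ a martingale — this is where one invokes that the product of a nonnegative supermartingale and a martingale, conditioned appropriately, behaves correctly, ultimately yielding $M_t\, p_t(\cdot,\ell)$. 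Dividing by $p_t(\cdot,\ell)$ and re-assembling over $\ell=L$ gives $Z_t M_t$.

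The main obstacle I anticipate is the careful handling of the joint measurability and the null sets: one needs a version of the density $(t,\omega,\ell)\mapsto p_t(\omega,\ell)$ for which the exceptional $P_L$-null set of ``bad'' $\ell$'s is genuinely independent of $t$, so that substituting the random $\ell=L(\omega)$ is legitimate and the c\`adl\`ag/martingale properties survive the substitution; this is exactly the technical content of Jacod's work and I would cite \cite{Jacod1985} (and its treatment in \cite{Ankirchner2007}) rather than reprove it. A secondary subtlety, already flagged in the remark preceding the proposition, is that we only need $Z_t>0$ for each fixed $t$ and not $Z_\infty>0$; this is what lets us avoid worrying about the behavior of $p_t(\cdot,L)$ as $t\to\infty$ (where it may hit zero), and it is consistent with the fact that the universal supermartingale density only needs to certify the $(\G_t)$-semimartingale property on each finite interval. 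Once $Z$ is shown to be a universal supermartingale density, Hypoth\`ese $(H')$ follows: any nonnegative $(\F_t)$-martingale $M$ satisfies that $ZM$ is a $(\G_t)$-supermartingale, hence a $(\G_t)$-semimartingale, and since $Z$ is strictly positive with $1/Z = p_\cdot(\cdot,L)$ a $(\G_t)$-semimartingale as well, $M=(1/Z)(ZM)$ is a $(\G_t)$-semimartingale, which by the reduction noted in the text gives the full statement.
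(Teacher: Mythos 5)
Your candidate $Z_t = 1/p_t(\cdot, L)$ and the overall strategy (disintegrate over $\ell$, cancel the $p_{t+s}$ factor against the density of $P_{t+s}(\omega,\cdot)$ with respect to $P_L$, then invoke the supermartingale property of $M$) is the right one, and it is the approach the paper takes. But there is a genuine gap at the heart of your computation. You say that after disintegrating one is left with $E_P(1_A M_{t+s}\, p_{t+s}(\cdot,\ell)\,|\,\F_t)$, which you propose to dominate by $1_A\, M_t\, p_t(\cdot,\ell)$ because ``$M$ is a supermartingale and $p_\cdot(\cdot,\ell)$ a martingale,'' calling this ``the product of a nonnegative supermartingale and a martingale, conditioned appropriately, behav[ing] correctly.'' That step does not hold: the product of a nonnegative supermartingale with a nonnegative martingale need not be a supermartingale. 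Take $M = p$ a non-constant bounded nonnegative martingale; then $E(M_{t+s}^2\,|\,\F_t) \ge M_t^2$ by Jensen's inequality, usually strictly, and the inequality you need goes the wrong way. Your ``schematic'' identity $E_P(1_A Z_{t+s}M_{t+s}) = \int E_P(1_A M_{t+s})\,P_L(d\ell)$ also drops the indicator $1_{\{p_{t+s}>0\}}$, which is precisely what carries the supermartingale information.

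What makes the argument work is that the cancellation removes $p_{t+s}$ from the inner integral \emph{entirely}: writing $\tilde Z_s(\omega,\ell) = 1_{\{p_s(\omega,\ell)>0\}}/p_s(\omega,\ell)$, one has $\tilde Z_{t+s}(\omega,\ell)\,P_{t+s}(\omega,d\ell) = 1_{\{p_{t+s}(\omega,\ell)>0\}}\,P_L(d\ell)$, so the only trace of the density that survives is the indicator of its positivity set. One then shows by a Fubini argument that, $P\otimes P_L$-a.s., $\{p_t = 0\}\subseteq\{p_{t+s}=0\}$; hence $1_{\{p_{t+s}>0\}}\le 1_{\{p_t>0\}}$, and the latter is $\F_t\otimes\mathcal{L}$-measurable. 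At that point the supermartingale property of $M$ \emph{alone} gives the estimate, and re-assembling via $1_{\{p_t>0\}}P_L(d\ell) = \tilde Z_t(\omega,\ell)\,P_t(\omega,d\ell)$ produces $E(1_A 1_B(L) M_t Z_t)$. A secondary point: your justification for $Z_t>0$ a.s.\ (``by Jacod's criterion the conditional law of $L$ is equivalent to $P_L$'') is not correct — \eqref{eq:jacod} gives only absolute continuity; the positivity instead follows from $P_t(\omega,\{p_t(\omega,\cdot)=0\}) = \int_{\{p_t(\omega,\cdot)=0\}} p_t(\omega,\ell)\,P_L(d\ell)=0$, integrated over $\omega$. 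Finally, your proof leans on the existence of a jointly c\`adl\`ag martingale version $(t,\ell)\mapsto p_t(\cdot,\ell)$ from \cite{Jacod1985}; the paper avoids invoking that much structure, working with Doob disintegrations for each fixed $t$ and producing the c\`adl\`ag supermartingale version at the very end by standard regularization.
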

\begin{proof}
   \begin{enumerate}
      \item Define for every $t \ge 0$
         \begin{align*}
            Y_t(\omega, \ell) = \frac{dP_t(\omega, \cdot)}{dP_L}(\ell).
         \end{align*}
         By Doob's disintegration theorem there exists a version of $Y_t$ that is $\F_t \otimes \mathcal{L}$-measurable, see also the proof of Theorem VI.2.10 in \cite{Protter2004}. Let $t, s \ge 0$. We first show that $P \otimes P_L$-almost surely
         \begin{align}\label{eq:Yt zero then Yt+s zero}
            \{(\omega, \ell): Y_t(\omega,\ell) = 0\} \subseteq \{(\omega, \ell): Y_{t+s}(\omega,\ell) = 0\}.
         \end{align}
         Note that $Y_{t+s} \ge 0$, and therefore by Fubini's Theorem and the tower property
         \begin{align*}
            \int_{\Omega \times \mathbb{L}} &1_{\{Y_t(\omega, \ell) = 0\}} Y_{t+s}(\omega, \ell) P\otimes P_L(d\omega, d\ell)  = \int_\Omega \int_\mathbb{L} 1_{\{Y_t(\omega, \ell) = 0\}} P_{t+s}(\omega, d\ell) P(d\omega) \\
            & =  \int_\Omega \int_\mathbb{L} 1_{\{Y_t(\omega, L(\omega)) = 0\}} P(d\omega) =  \int_\Omega \int_\mathbb{L} 1_{\{Y_t(\omega, \ell) = 0\}} P_t(\omega, d\ell) P(d\omega) = 0,
         \end{align*}
         since $P_t(\omega, \cdot)$-almost surely $Y_t(\omega, \cdot) > 0$.
         
      \item Define $\tilde{Z}_t(\omega,\ell) = 1_{\{Y_t(\omega, \ell) > 0\}}/Y_t(\omega, \ell)$ and
         \begin{align*}
            Z_t (\omega) = \tilde{Z}_t(\omega, L(\omega)).
         \end{align*}
         This $Z$ is $(\G_t)$-adapted. Let $M$ be a nonnegative $(\F_t)$-supermartingale. 
%
%
%
%
         Let $t, s \ge 0$, let $A \in \F_t$, and $B \in \mathcal{L}$. Then we can apply the tower property to obtain
         \begin{align*}
            E&\left( 1_{A}1_B(L)M_{t+s}Z_{t+s}\right) = \int_\Omega 1_{A}(\omega) \int_\mathbb{L} 1_B(\ell)M_{t+s}\tilde{Z}_{t+s}(\omega, \ell) P_{t+s}(\omega, d\ell) P(d\omega) \\
            & = \int_\Omega 1_{A}(\omega) \int_\mathbb{L} 1_B(\ell)M_{t+s}(\omega) \frac{Y_{t+s}(\omega, x)}{Y_{t+s}(\omega, x)}1_{\{Y_{t+s}(\omega, x) > 0\}}  P_L(d\ell) P(d\omega) \\
            & \le  \int_\Omega 1_{A}(\omega) \int_\mathbb{L} 1_B(\ell) M_{t+s}(\omega) 1_{\{Y_{t}(\omega, x) > 0\}}  P_L(d\ell) P(d\omega).
         \end{align*}
         In the last step we used \eqref{eq:Yt zero then Yt+s zero}  and that $1_{A}(\omega) 1_B(\ell)M_{t+s}(\omega)$ is $P_L \otimes P$-a.s. nonnegative. Using the $(\F_t)$-supermartingale property of $M$ in conjunction with Fubini's theorem, we obtain
         \begin{align*}
             \int_\Omega &1_{A}(\omega) \int_\mathbb{L} 1_B(\ell) M_{t+s}(\omega) 1_{\{Y_{t}(\omega, \ell) > 0\}}  P_L(d\ell) P(d\omega) \\
             & \le \int_\mathbb{L} 1_B(\ell) \int_\Omega 1_{A}(\omega) M_{t}(\omega) 1_{\{Y_{t}(\omega, \ell) > 0\}}   P(d\omega)P_L(d\ell) \\
             & = \int_\Omega 1_{A}(\omega) \int_\mathbb{L} 1_B(\ell) M_{t}(\omega) \frac{Y_t(\omega, \ell)}{Y_t(\omega, \ell)}1_{\{Y_{t}(\omega, \ell) > 0\}}  P_L(d\ell) P(d\omega) \\
             & = \int_\Omega 1_{A}(\omega) \int_\mathbb{L} 1_B(\ell) M_{t}(\omega) \tilde{Z}_t(\omega, \ell) P_t(\omega, d\ell) P(d\omega) \\
             & = E\left[ 1_{A} 1_B(L) (1 + (H \cdot S)_t) Z_t\right]
         \end{align*}
         The monotone class theorem allows to pass from sets of the form $A \cap L^{-1}(B)$ to general sets in $(\G^0_t)$, and therefore $MZ$ is a $(\G^0_t)$-supermartingale. Taking $M \equiv 1$, we see that also $Z$ is a $(\G^0_t)$-supermartingale.

      \item Let us show that $Z_t$ is $P$-a.s. strictly positive for every $t \ge 0$. For this purpose it suffices to show that $P(\omega: Y_t(\omega, L(\omega))=0) = 0$. By the tower property
         \begin{align*}
            E(1_{\{Y_t(\cdot, L(\cdot)) = 0\}}) & = \int_\Omega \int_\mathbb{L} 1_{\{Y_t(\omega, \ell) = 0\}} P_t(\omega, d\ell) P(d\omega) = 0.
         \end{align*}
      \item  $Z$ is not necessarily right-continuous, and also we did not show yet that $ZM$ is a $(\G_t)$-supermartingale and not just a $(\G^0_t)$-supermartingale. But the construction of a right-continuous universal supermartingale density is now done exactly as in the proof of Theorem \ref{thm:abstract ex supermartingale density}. The supermartingale property under $(\G_t)$ follows from Corollary 2.2.10 of \cite{Ethier1986}, which states that $E(X|\G_t) = \lim_{s \downarrow t} E(X|\G^0_s)$ for every $L^1$-random variable $X$.
    \end{enumerate}
\end{proof}


\begin{rmk}
   If we are only interested whether Hypoth\`ese $(H')$ holds and not whether there exists a universal supermartingale density, then we can also work with the unregularized filtration $(\G^0_t)$. Since Hypoth\`ese $(H')$ holds for $(\G_t)$ and since $(G^0_t)$ is a filtration shrinkage of $(\G_t)$, Stricker's theorem implies that Hypoth\`ese $(H')$ is also satisfied for $(\G^0_t)$.
\end{rmk}

\begin{rmk}\label{rmk:filtration discussion}
   We may replace assumption \eqref{eq:jacod} by $P_t(\omega, d\ell) \gg P_L(d\ell)$ or $P_t(\omega, d\ell) \sim P_L(d\ell)$. In the first case we could use the same proof as for Proposition \ref{prop:jacod} to obtain the existence of a nonnegative martingale $Z$, not necessarily strictly positive, such that $Z M$ is a $(\G_t)$-supermartingale for every nonnegative $(\F_t)$-supermartingale $M$. In particular there exists an absolutely continuous measure $Q \ll P$, such that every locally bounded $(P,(\F_t))$-local martingale is a $(Q,(\G_t))$-local martingale. Since (NA) is related to the existence of absolutely continuous local martingale measures, see \cite{Delbaen1995a}, this indicates that the (NA) property may be stable under initial filtration enlargements that satisfy this ``reverse Jacod condition''. Of course it is much harder to satisfy this assumption, for example it will never be satisfied if $L$ is $\F_t$-measurable for some $t \ge 0$.
   
   In case $P_t(\omega, d\ell) \sim P_L(d\ell)$, the same proof as for Proposition \ref{prop:jacod} yields the existence of an equivalent measure $Q \sim P$, such that every nonnegative $(P,(\F_t))$-supermartingale is a nonnegative $(Q,(\G_t))$-supermartingale. In particular, every locally bounded $(P,(\F_t))$-local martingale is a $(Q,(\G_t))$-local martingale. This condition has been intensely studied by Amendinger, Imkeller and Schweizer \cite{Amendinger1998} as well as Amendinger \cite{Amendinger2000}. Obviously it is harder to satisfy than Jacod's condition or the reverse Jacod condition. In financial applications one may however assume that the knowledge of the ``insider'' is perturbed by a small Gaussian noise that is independent of $\F_\infty$ (or more generally by an independent noise with strictly positive density wrt Lebesgue measure). Then $P_t(\omega, d\ell) \sim P_L(d\ell)$ is always satisfied. If the density of the noise is not strictly positive, then we only obtain $P_t(\omega, d\ell) \ll P_L(d\ell)$.
\end{rmk}

\subsection{Universal supermartingale densities and the generalized Jacod criterion}
Let $(\Omega, \F, (\F_t), P)$ be a filtered probability space and let $\G_t \supseteq \F_t$ be a filtration enlargement. Assume that $\G_t$ is countably generated for every $t \ge 0$, to wit, $\G_t = \sigma(B^t_1, B^t_2, \dots)$. In particular the regular conditional probabilities
\begin{align*}
   P_t(\omega, \cdot)|_{\G_t} = P(\cdot |\F_t)|_{\G_t}(\omega)
\end{align*}
exist. We say that the \emph{generalized Jacod condition} is satisfied if for all $t, s \ge 0$ a.s.
\begin{align*}
   P_{t+s}|_{\G_t}(\omega, \cdot) \ll P_t|_{\G_t}(\omega, \cdot).
\end{align*}
It is known that neither Jacod's condition nor the generalized Jacod condition are necessary for Hypoth\`{e}se $(H')$ to hold. But if we assume that there exists a universal supermartingale density for $(\G_t)$, then the generalized Jacod condition necessarily holds.
\begin{prop}
   Assume that there exists a universal supermartingale density for $(\G_t)$. Then the generalized Jacod condition is satisfied.
\end{prop}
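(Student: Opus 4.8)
The plan is to fix $t,s\ge 0$, set $r=t+s$, and prove that $P_r(\omega,\cdot)|_{\G_t}\ll P_t(\omega,\cdot)|_{\G_t}$ for $P$-almost every $\omega$; since $t,s$ are arbitrary this is exactly the generalized Jacod condition. Write $Z$ for the given universal supermartingale density for $(\G_t)$. Taking the constant process $M\equiv 1$ in the defining property shows that $Z$ is itself a $(\G_t)$-supermartingale, so $Z_t,Z_r\in L^1$, and by assumption $P(Z_t>0)=P(Z_r>0)=1$. I will also use the regular versions $P_u(\omega,\cdot)$ of $P(\cdot|\F_u)$ on $\G_u$ (which exist by hypothesis) for $u=t$ and $u=r$, together with the disintegration identity $E(X|\F_u)(\omega)=\int X(\theta)P_u(\omega,d\theta)$ for nonnegative $\G_u$-measurable $X$.

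The first step is a pointwise inequality obtained by feeding $(\F_t)$-martingales into $Z$. Fix $G\in\G_t$ and $F\in\F_r$ and let $M^F_u=E(1_F|\F_u)$, a bounded nonnegative $(\F_t)$-martingale, hence a nonnegative $(\F_t)$-supermartingale; therefore $ZM^F$ is a $(\G_t)$-supermartingale and $E(Z_rM^F_r|\G_t)\le Z_tM^F_t$. Integrating this against $1_G$ and using $M^F_r=1_F$, $M^F_t=E(1_F|\F_t)$ together with the tower property yields $E\big(1_F\,E(1_GZ_r|\F_r)\big)\le E\big(1_F\,E(1_GZ_t|\F_t)\big)$. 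Since $E(1_GZ_t|\F_t)$ is $\F_t$-measurable, hence $\F_r$-measurable, and $F\in\F_r$ is arbitrary, this gives $E(1_GZ_r|\F_r)\le E(1_GZ_t|\F_t)$ $P$-a.s., and after disintegrating both sides, $\int_GZ_r\,dP_r(\omega,\cdot)\le\int_GZ_t\,dP_t(\omega,\cdot)$ for $P$-a.e.\ $\omega$, for this fixed $G$.

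The crux is to turn this ``for each $G$, almost surely'' statement into an ``almost surely, for every $G\in\G_t$'' statement, and this is where one must keep the strictly positive factor $Z$ rather than cancelling it. Let $\mathcal{A}$ be the countable algebra generated by $B^t_1,B^t_2,\dots$, so $\G_t=\sigma(\mathcal{A})$, and intersect the exceptional null sets over the countably many $G\in\mathcal{A}$; further intersect with the full-measure sets on which $E(Z_t|\F_t)<\infty$, $E(Z_r|\F_r)<\infty$, $P_t(\cdot,\{Z_t=0\})=0$ and $P_r(\cdot,\{Z_r=0\})=0$ (each has full measure since, e.g., $\int P_r(\omega,\{Z_r=0\})P(d\omega)=P(Z_r=0)=0$). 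On the resulting full-measure set $\Omega_0$ the set functions $\mu_\omega(G):=\int_GZ_r\,dP_r(\omega,\cdot)$ and $\nu_\omega(G):=\int_GZ_t\,dP_t(\omega,\cdot)$ are finite measures on $\G_t$ satisfying $\mu_\omega\le\nu_\omega$ on the generating algebra $\mathcal{A}$; a routine approximation argument (for $G\in\G_t$ pick $A\in\mathcal{A}$ with $(\mu_\omega+\nu_\omega)(G\triangle A)$ small) then upgrades this to $\mu_\omega\le\nu_\omega$ on all of $\G_t$.

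Finally, for $\omega\in\Omega_0$: if $G\in\G_t$ has $P_t(\omega,G)=0$ then $\nu_\omega(G)=0$, hence $\mu_\omega(G)=\int_GZ_r\,dP_r(\omega,\cdot)=0$; since $Z_r>0$ holds $P_r(\omega,\cdot)$-a.e., this forces $P_r(\omega,G)=0$. Thus $P_r(\omega,\cdot)|_{\G_t}\ll P_t(\omega,\cdot)|_{\G_t}$ on $\Omega_0$, which is the assertion. The only genuine obstacle is the passage in the third paragraph: working directly with $P_r(\omega,\cdot)$ and $P_t(\omega,\cdot)$ one controls only one $G$ at a time, and a $P_t(\omega,\cdot)$-null set of $\G_t$ need not lie inside a $P_t(\omega,\cdot)$-null set from the generating algebra, so the pointwise-in-$G$ statement does not by itself yield absolute continuity of the two measures; retaining the strictly positive factor $Z$ converts the estimates into a bona fide domination $\mu_\omega\le\nu_\omega$ of finite measures, which \emph{does} propagate from the algebra to $\G_t$, and from which absolute continuity is recovered precisely because $Z_r$ and $Z_t$ are strictly positive.
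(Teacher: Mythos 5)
Your proof is correct and follows essentially the same route as the paper: derive a pointwise inequality from the $(\G_t)$-supermartingale property of $ZM^A$ with $A$ ranging over $\F_{t+s}$, disintegrate, and then use the countable generation of $\G_t$ together with a monotone-class/approximation argument to obtain a single exceptional null set. The only technical variation is that you keep $Z_{t+s}$ and $Z_t$ as separate densities ($\mu_\omega = Z_{t+s}\,dP_{t+s}$, $\nu_\omega = Z_t\,dP_t$) rather than forming the ratio $Z_{t+s}/Z_t$ as the paper does, which lets you skip the truncation $1_{\{Z_t\ge 1/n\}}$ and monotone-convergence step and makes the two sides symmetric, at the mild cost of needing $P_{t+s}(\omega,\{Z_{t+s}=0\})=0$ to finish.
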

\begin{proof}
   \begin{enumerate}
         \item For every $A \in \F$ the process $M^A_t:=E_P(1_A|\F_t)$, $t \ge 0$, is a nonnegative $(\F_t)$-martingale. Therefore $M^AZ$ is a $(\G_t)$-supermartingale. Fix $t, s \ge 0$. Let $A \in \F_{t+s}$ and $B \in \G_t$. Then for every $n \in \N$
            \begin{align*}
               E\left(1_A 1_B \frac{Z_{t+s}}{Z_t} 1_{\{Z_t \ge 1/n\}}\right) & = E\left( \frac{1_B 1_{\{Z_t \ge 1/n\}}}{Z_t} M^A_{t+s} Z_{t+s} \right) \\
               & \le E\left( \frac{1_B 1_{\{Z_t \ge 1/n\}}}{Z_t} M_{t}^A Z_{t}\right) \\
               & = E(1_A E(1_B 1_{\{Z_t \ge 1/n\}}|\F_t)).
            \end{align*}
            Applying monotone convergence on both sides, we obtain that
            \begin{align*}
               E\left(1_A 1_B \frac{Z_{t+s}}{Z_t}\right) \le E(1_A E_P(1_B |\F_t)).
            \end{align*}
            The same inequality holds if we replace $Z_{t+s}/Z_t$ by a version $\tilde{Z}_{t+s}/\tilde{Z}_t$ that is strictly positive for \emph{every} $\omega \in \Omega$. Since the inequality holds for all $A \in \F_{t+s}$, this implies
            \begin{align*}
               \int 1_B(\omega') \frac{\tilde{Z}_{t+s}}{\tilde{Z}_t}(\omega') P_{t+s}(\omega, d\omega') \le P_t(\omega, B) \text{ for almost every } \omega \in \Omega.
            \end{align*}
            This looks promising. The only problem is that the null set outside of which this inequality holds may depend on $B$.
            
	\item Now we use the assumption that $\G_t$ is countably generated. This means that we can find an increasing sequence of finite partitions
	   \begin{align*}
	      \mathcal{P}^{n} = \cup_{k=1}^{K_n} B_k^n
	   \end{align*}
	   of $\Omega$ such that $\G_t = \sigma(\mathcal{P}^n: n \ge 0)$. 
            Since $\cup_{n \ge 0} \sigma(\mathcal{P}^n)$ is countable, we can choose a null set $N$ such that for all $\omega \in \Omega \backslash N$ and all $B \in \cup_n \sigma(\mathcal{P}^n)$ we have
             \begin{align}\label{eq:generalized jacod proof}
               \int 1_B(\omega') \frac{\tilde{Z}_{t+s}}{\tilde{Z}_t}(\omega') P_{t+s}(\omega, d\omega') \le P_t(\omega, B).
            \end{align}
            Now $B \in \cup_n \sigma(\mathcal{P}^n)$ is stable under finite intersections (it even is an algebra). The monotone class theorem then implies that \eqref{eq:generalized jacod proof} holds for all $B \in \sigma(\mathcal{P}^n: n \ge 0) = \G_t$. Since $\tilde{Z}_{t+s}(\omega')/\tilde{Z}_t(\omega') > 0$ for every $\omega' \in \Omega$, the proof is complete.
   \end{enumerate}
\end{proof}

\begin{cor}\label{cor:generalized jacod complete}
   Let $(\F_t)$ be a filtration for which a continuous local martingale $M$ has the predictable representation property. Assume that under $(\G_t)$, $M$ is of the form
   \begin{align*}
      M_t = \tilde{M}_t + \int_0^t \alpha_s d\langle \tilde{M}\rangle_s
   \end{align*}
   for a $(\G_t)$-local martingale $\tilde{M}$ and a predictable integrand $\alpha \in L^2_{\text{loc}}(\tilde{M})$. Then the generalized Jacod condition holds.
\end{cor}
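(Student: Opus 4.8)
The plan is to deduce the corollary from the preceding proposition by exhibiting a universal supermartingale density for $(\G_t)$; recall that $\G_t$ is assumed countably generated throughout this subsection, so the proposition applies. The candidate will be the stochastic exponential $\mathcal{E}(-\alpha\cdot\tilde M)$, i.e.\ the minimal supermartingale density associated with the structure condition.

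First I would extract what continuity of $M$ buys us. Since $M$ is continuous and $\int_0^\cdot \alpha_s\, d\langle\tilde M\rangle_s$ is continuous, $\tilde M = M - \int_0^\cdot \alpha_s\, d\langle\tilde M\rangle_s$ is a continuous $(\G_t)$-local martingale; and because the quadratic variation of a continuous semimartingale is independent of the filtration, $\langle\tilde M\rangle = \langle M\rangle$, hence $L^2_{\text{loc}}(\tilde M) = L^2_{\text{loc}}(M)$. As $\alpha\in L^2_{\text{loc}}(\tilde M)$, the integral $\alpha\cdot\tilde M$ is a well-defined continuous $(\G_t)$-local martingale, and I set
\begin{align*}
   Z = \mathcal{E}(-\alpha\cdot\tilde M),
\end{align*}
which is a strictly positive continuous $(\G_t)$-local martingale with $Z_0=1$; in particular $Z$ is $(\G_t)$-adapted, a.s.\ c\`adl\`ag, and $P(Z_t>0)=1$ for every $t\ge 0$.

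Next I would verify that $Z$ is a universal supermartingale density, i.e.\ that $ZN$ is a $(\G_t)$-supermartingale for every nonnegative $(\F_t)$-supermartingale $N$. By the (locally applied) Doob--Meyer decomposition, write $N = N_0 + L - A$ with $L$ an $(\F_t)$-local martingale, $L_0=0$, and $A$ predictable increasing, $A_0=0$; the predictable representation property produces a predictable, $M$-integrable $H$ with $L = H\cdot M$, and since $M$ is continuous this means $H\in L^2_{\text{loc}}(M)=L^2_{\text{loc}}(\tilde M)$, so that $\int_0^\cdot|H_s\alpha_s|\,d\langle M\rangle_s<\infty$ by Kunita--Watanabe and $H\cdot\tilde M$, $(H\alpha)\cdot\langle M\rangle$ are well defined under $(\G_t)$. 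Substituting the $(\G_t)$-decomposition of $M$ gives
\begin{align*}
   N_t = N_0 + \int_0^t H_s\,d\tilde M_s + \int_0^t H_s\alpha_s\,d\langle M\rangle_s - A_t,
\end{align*}
and integration by parts — using that $Z$ is continuous, that the finite-variation part of $N$ does not contribute to $[Z,N]$, and that $d[Z,H\cdot\tilde M]_t = -Z_t\alpha_t H_t\,d\langle M\rangle_t$ — yields
\begin{align*}
   d(Z_tN_t) = Z_t\big(H_t - N_{t-}\alpha_t\big)\,d\tilde M_t - Z_t\,dA_t.
\end{align*}
Thus $ZN$ is a nonnegative $(\G_t)$-local supermartingale whose drift $-Z\,dA$ is nonincreasing because $Z\ge 0$ and $A$ is increasing. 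Its local martingale part equals $Z_tN_t - N_0 + \int_0^t Z_s\,dA_s \ge -N_0$, an integrable lower bound, so it is a genuine supermartingale; hence $ZN = N_0 + (\text{supermartingale}) - \int_0^\cdot Z_s\,dA_s$ is a $(\G_t)$-supermartingale with $E((ZN)_t)\le E(N_0)<\infty$. Taking $N\equiv 1$ shows $Z$ itself is a $(\G_t)$-supermartingale, so $Z$ is a universal supermartingale density for $(\G_t)$, and the preceding proposition gives the generalized Jacod condition.

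The only genuinely computational step is the integration by parts identifying the drift of $ZN$ as $-Z\,dA$; the remaining points (localizing for Doob--Meyer, finiteness of the drift integral, and passing from local supermartingale to supermartingale via nonnegativity) are routine. Conceptually, the reason no further argument is needed to cover \emph{all} nonnegative $(\F_t)$-supermartingales rather than merely the martingales is that $\mathcal{E}(-\alpha\cdot\tilde M)$ is the minimal supermartingale density attached to the structure condition, which holds automatically here precisely because $\alpha\in L^2_{\text{loc}}(\tilde M)$.
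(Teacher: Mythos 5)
Your proof is correct and takes exactly the same route as the paper: exhibit $Z=\mathcal{E}(-\alpha\cdot\tilde M)$ as a universal supermartingale density and invoke the preceding proposition. The paper merely asserts this; your integration-by-parts verification that $d(Z_tN_t)=Z_t(H_t-N_{t-}\alpha_t)\,d\tilde M_t - Z_t\,dA_t$ for $N=N_0+H\cdot M - A$ is the computation the paper leaves implicit, and it is carried out correctly.
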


\begin{proof}
   In this case the stochastic exponential
   \begin{align*}
      Z_t = \exp\left( -\int_0^t \alpha_s d \tilde{M}_s - \frac{1}{2} \int_0^t \alpha^2_s d\langle \tilde{M} \rangle_s \right)
   \end{align*}
   is a universal supermartingale density.
\end{proof}

   Corollary \ref{cor:generalized jacod complete} was previously shown by Imkeller, Pontier and Weisz \cite{Imkeller2001} for the case of initial enlargements and under the stronger assumption
   \begin{align*}
      E\left(\int_0^\infty \alpha^2_s d\langle \tilde{M} \rangle_s\right) < \infty.
   \end{align*}
   
   For simplicity we gave the one-dimensional formulation of Corollary \ref{cor:generalized jacod complete}. Of course the same argument works in the multidimensional setting: if $M = (M^1, \dots, M^d)$ has the predictable representation under $(\F_t)$, and if $M$ satisfies the structure condition under $(\G_t)$, then the generalized Jacod condition is satisfied.

\begin{appendix}

\section{Convex compactness and Tychonoff's theorem}

In Section \ref{sec: supermartingale densities} we needed Tychonoff's theorem for products of convexly compact spaces. Since we only applied the result to a countable product of metric spaces, we will only prove this special case. Let us recall the following definitions.

\begin{defn}
   \begin{enumerate}
    \item A set $A$ is called \emph{directed} if it is partially ordered and if for every $a, b \in A$ there exists $c \in A$ such that $a \le c$ and $b \le c$.
    \item Let $X$ be a topological space. A \emph{net} in $X$ is a map from some directed set $A$ to $X$.
    \item A net $\{x_\alpha\}_{\alpha \in A}$ in $X$ \emph{converges} to a point $x \in X$ if for every open neighborhood $U$ of $x$ there exists $\alpha \in A$, such that $x_{\alpha'} \in U$ for every $\alpha' \ge \alpha$.
   \end{enumerate}
\end{defn}

\begin{ex}
   If $A = \N$, then a net in $X$ is just a sequence with values in $X$.
\end{ex}

Zitkovic \cite{Zitkovic2010} introduces the notation $\mathrm{Fin}(A)$, which denotes all non-empty finite subsets of a given set $A$. If $B$ is a subset of a vector space, then $\mathrm{conv}(B)$ denotes the convex hull of $B$. Zitkovic then gives the following definition.

\begin{defn}\label{def:subnet convex}
   Let $\{x_\alpha\}_{\alpha \in A}$ be a net in a topological vector space $X$. A net $\{y_\beta\}_{\beta \in B}$ is called a \emph{subnet of convex combinations of $\{x_\alpha\}_{\alpha \in A}$} if there exists a map $D: B \rightarrow \mathrm{Fin}(A)$ such that
   \begin{enumerate}
    \item $y_\beta \in \mathrm{conv}\{x_\alpha: \alpha \in D(\beta)\}$ for every $\beta \in B$, and
    \item for every $\alpha \in A$ there exists $\beta \in B$ such that $\alpha' \ge \alpha$ for all $\alpha' \in \cup_{\beta' \ge \beta} D(\beta')$.
   \end{enumerate}
\end{defn}

\begin{lem}\label{rmk:subnet of subnet}
   Let $\{y_\beta\}_{\beta \in B}$ be a subnet of convex combinations of $\{x_\alpha\}_{\alpha \in A}$, and let $\{z_\gamma\}_{\gamma \in C}$ be a subnet of convex combinations of $\{y_\beta\}_{\beta \in B}$. Then $\{z_\gamma\}_{\gamma \in C}$ is a subnet of convex combinations of $\{x_\alpha\}_{\alpha \in A}$.
\end{lem}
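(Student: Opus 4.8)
The plan is to exhibit an explicit map $F\colon C\to \mathrm{Fin}(A)$ witnessing that $\{z_\gamma\}_{\gamma\in C}$ is a subnet of convex combinations of $\{x_\alpha\}_{\alpha\in A}$, built by composing the maps $D\colon B\to\mathrm{Fin}(A)$ and $E\colon C\to\mathrm{Fin}(B)$ that come with the two given subnet-of-convex-combinations structures. The natural candidate is
\begin{align*}
   F(\gamma) = \bigcup_{\beta\in E(\gamma)} D(\beta),
\end{align*}
which is a non-empty finite subset of $A$ because $E(\gamma)$ is finite and each $D(\beta)$ is finite. First I would record the properties of $D$ and $E$: (1) $y_\beta\in\mathrm{conv}\{x_\alpha:\alpha\in D(\beta)\}$ and (1$'$) $z_\gamma\in\mathrm{conv}\{y_\beta:\beta\in E(\gamma)\}$; (2) for every $\alpha\in A$ there is $\beta_0\in B$ with $\alpha'\ge\alpha$ whenever $\alpha'\in\bigcup_{\beta'\ge\beta_0} D(\beta')$, and (2$'$) for every $\beta\in B$ there is $\gamma_0\in C$ with $\beta'\ge\beta$ whenever $\beta'\in\bigcup_{\gamma'\ge\gamma_0} E(\gamma')$.

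For condition (1) of Definition \ref{def:subnet convex} applied to $F$, I would argue that a convex combination of convex combinations is again a convex combination: for $\beta\in E(\gamma)$ we have $D(\beta)\subseteq F(\gamma)$, hence $y_\beta\in\mathrm{conv}\{x_\alpha:\alpha\in D(\beta)\}\subseteq\mathrm{conv}\{x_\alpha:\alpha\in F(\gamma)\}$, and since $z_\gamma\in\mathrm{conv}\{y_\beta:\beta\in E(\gamma)\}$ and $\mathrm{conv}\{x_\alpha:\alpha\in F(\gamma)\}$ is convex, it follows that $z_\gamma\in\mathrm{conv}\{x_\alpha:\alpha\in F(\gamma)\}$. (Concretely, writing $z_\gamma=\sum_i\lambda_i y_{\beta_i}$ and $y_{\beta_i}=\sum_j\mu_{ij}x_{\alpha_{ij}}$ with $\alpha_{ij}\in D(\beta_i)\subseteq F(\gamma)$ gives $z_\gamma=\sum_{i,j}\lambda_i\mu_{ij}x_{\alpha_{ij}}$ with $\sum_{i,j}\lambda_i\mu_{ij}=1$.)

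For condition (2), fix $\alpha\in A$. Choose $\beta_0$ as in (2), then choose $\gamma_0$ as in (2$'$) for this $\beta_0$. I claim $\alpha'\ge\alpha$ for every $\alpha'\in\bigcup_{\gamma'\ge\gamma_0}F(\gamma')$: such an $\alpha'$ lies in $D(\beta)$ for some $\beta\in E(\gamma')$ with $\gamma'\ge\gamma_0$, so $\beta\in\bigcup_{\gamma''\ge\gamma_0}E(\gamma'')$ and hence $\beta\ge\beta_0$ by (2$'$), whence $\alpha'\in D(\beta)\subseteq\bigcup_{\beta''\ge\beta_0}D(\beta'')$ and $\alpha'\ge\alpha$ by (2). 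This verifies that $F$ satisfies Definition \ref{def:subnet convex}, completing the proof. The argument is essentially pure bookkeeping with the nested index sets; the only point requiring a moment's care—and the one I would state explicitly—is the closure of $\mathrm{conv}$ under forming convex combinations of its own elements, used for condition (1).
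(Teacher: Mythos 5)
Your proof is correct and follows essentially the same route as the paper's: both define the composite map $\gamma\mapsto\bigcup_{\beta\in E(\gamma)}D(\beta)$ and verify the two conditions of Definition \ref{def:subnet convex} by the same chain of inclusions, with your explicit double-sum computation for condition (1) being a small elaboration the paper leaves implicit.
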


\begin{proof}
   Let $D_B: B \rightarrow \mathrm{Fin}(A)$ and $D_C: C \rightarrow \mathrm{Fin}(B)$ be two maps as described in Definition \ref{def:subnet convex}, $D_B$ for $\{y_\beta\}_{\beta \in B}$ and $D_C$ for $\{z_\gamma\}_{\gamma \in C}$. Define
   \begin{align*}
      D: C \rightarrow \mathrm{Fin}(A), \qquad D(\gamma) = \cup_{\beta \in D_C(\gamma)} D_B(\beta).
   \end{align*}
   Then we have for all $\gamma \in C$
   \begin{align*}
      z_\gamma \in \mathrm{conv}\{y_\beta: \beta \in D_C(\gamma)\} \subseteq \mathrm{conv}\{x_\alpha: \alpha \in \cup_{\beta \in D_C(\gamma)} D_B(\beta)\} = \mathrm{conv} \{x_\alpha: \alpha \in D(\gamma)\},
   \end{align*}
   and therefore condition (1) of Definition \ref{def:subnet convex} is satisfied. As for condition (2), let $\alpha \in A$. Then there exists $\beta \in B$, such that $\alpha'\ge \alpha$ for all $\alpha' \in \cup_{\beta'\ge \beta} D_B(\beta')$. For this given $\beta$, there exists $\gamma \in C$, such that $\beta' \ge \beta$ for all $\beta' \in \cup_{\gamma'\ge \gamma} D_C(\gamma')$. Hence $\alpha' \ge \alpha$ for all
   \begin{align*}
      \alpha' \in \cup_{\gamma'\ge \gamma} D(\gamma') = \cup_{\gamma'\ge \gamma} \cup_{\beta' \in D_C(\gamma')} D_B(\beta') \subseteq \cup_{\beta' \ge \beta} D_B(\beta').
   \end{align*}
\end{proof}

One of Zitkovic's \cite{Zitkovic2010} main results is the following Proposition.

\begin{prop}\label{prop:characterization convex compactness}
   A closed and convex subset $C$ of a topological vector space $X$ is convexly compact if and only if for any net $\{x_\alpha: \alpha \in A\}$ in $C$ there exists a subnet of convex combinations $\{y_\beta: \beta \in B\}$, such that $\{y_\beta\}$ converges to some $y \in X$.
\end{prop}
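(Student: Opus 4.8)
The plan is to replay the classical proof that a space is compact if and only if every net has a convergent subnet, with two modifications: ``subnet'' becomes ``subnet of convex combinations'', and the closure of a tail of the net is replaced by the \emph{closed convex hull} of a tail. I would prove the two implications separately; both are essentially routine, the only care being in the construction of the index sets.

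\emph{Convex compactness $\Rightarrow$ the net condition.} Given a net $\{x_\alpha : \alpha \in A\}$ in $C$, for each $\alpha \in A$ I would set $F_\alpha = \overline{\mathrm{conv}}\{x_{\alpha'} : \alpha' \ge \alpha\}$. Since $C$ is closed and convex, each $F_\alpha$ is a closed convex subset of $C$, and $\{F_\alpha\}_{\alpha \in A}$ has the finite intersection property because $A$ is directed: for $\alpha_1,\dots,\alpha_n$ choose $\alpha$ above them all, so that $x_\alpha \in F_{\alpha_i}$ for every $i$. Convex compactness, in the equivalent form ``if every finite subfamily has nonempty intersection then so does the whole family'', yields $y \in \bigcap_{\alpha \in A} F_\alpha$. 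To extract the subnet I would take $B = A \times \mathcal U$, where $\mathcal U$ is the neighbourhood filter of $y$ directed by reverse inclusion, with the product order. For $\beta = (\alpha, U) \in B$, $y$ lies in the closure of $\mathrm{conv}\{x_{\alpha'} : \alpha' \ge \alpha\}$, so $U$ meets this convex hull; I pick $y_\beta$ in that intersection and let $D(\beta)$ be a finite subset of $\{\alpha' : \alpha' \ge \alpha\}$ witnessing that $y_\beta$ is a convex combination of the corresponding $x_{\alpha'}$. Then condition (1) of Definition \ref{def:subnet convex} holds by construction; condition (2) holds because for $\beta = (\alpha_0, X)$ every $\beta' \ge \beta$ satisfies $D(\beta') \subseteq \{\alpha' : \alpha' \ge \alpha_0\}$; and $\{y_\beta\}$ converges to $y$ since $y_\beta \in U$ whenever $\beta = (\alpha, U)$ dominates $(\alpha_*, U_0)$ for a fixed $\alpha_* \in A$.

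\emph{The net condition $\Rightarrow$ convex compactness.} Given a family $\{F_\alpha\}_{\alpha \in A}$ of closed convex subsets of $C$ with the finite intersection property, I would index by $\mathrm{Fin}(A)$ ordered by inclusion and pick $x_a \in \bigcap_{\alpha \in a} F_\alpha$ for each $a \in \mathrm{Fin}(A)$. Applying the hypothesis to the net $\{x_a\}$ gives a subnet of convex combinations $\{y_\beta\}_{\beta \in B}$, with associated map $D$, converging to some $y \in X$. Fixing $\alpha \in A$ and applying condition (2) of Definition \ref{def:subnet convex} to the singleton $\{\alpha\} \in \mathrm{Fin}(A)$, I obtain $\beta_0 \in B$ such that every $a$ occurring in $D(\beta')$ for some $\beta' \ge \beta_0$ contains $\alpha$; hence $x_a \in \bigcap_{\gamma \in a} F_\gamma \subseteq F_\alpha$ for all such $a$, and by convexity of $F_\alpha$ we get $y_{\beta'} \in \mathrm{conv}\{x_a : a \in D(\beta')\} \subseteq F_\alpha$ for every $\beta' \ge \beta_0$. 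Since $\{y_{\beta'} : \beta' \ge \beta_0\}$ is a tail of the net $\{y_\beta\}$ and hence still converges to $y$, and $F_\alpha$ is closed, $y \in F_\alpha$; as $\alpha$ was arbitrary, $y \in \bigcap_{\alpha} F_\alpha$, which is therefore nonempty.

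The mathematics here is light; the one place demanding attention is that the finite index sets $D(\beta)$ in the first implication must be chosen \emph{within} the tail $\{\alpha' : \alpha' \ge \alpha\}$, which is exactly what makes condition (2) of Definition \ref{def:subnet convex} come out, and dually that one applies condition (2) in the second implication to singletons of $\mathrm{Fin}(A)$. Conceptually the point is just that closed convex hulls of tails play the role that ordinary tail-closures play in the standard net-theoretic characterisation of compactness.
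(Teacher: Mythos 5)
The paper does not give its own proof of this proposition: it is stated as a known result, citing Zitkovic \cite{Zitkovic2010} (his Theorem 3.1/3.5), and is used as a black box. So there is no in-paper argument to compare against; one can only compare with Zitkovic's proof, which your argument essentially reproduces.

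Your proof is correct. The two directions are handled cleanly: for necessity, the sets $F_\alpha = \overline{\mathrm{conv}}\{x_{\alpha'} : \alpha' \ge \alpha\}$ are closed convex subsets of $C$ with the finite intersection property, convex compactness yields $y \in \bigcap_\alpha F_\alpha$, and the index set $B = A \times \mathcal U$ (with $D(\beta)$ chosen inside the tail $\{\alpha' \ge \alpha\}$) produces exactly the two conditions of Definition \ref{def:subnet convex} plus convergence to $y$. For sufficiency, indexing by $\mathrm{Fin}(A)$ under inclusion, applying condition (2) to singletons, and using convexity of $F_\alpha$ plus closedness to conclude $y \in F_\alpha$ is right. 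The key observation you flag --- that $D(\beta)$ must be picked inside $\{\alpha' : \alpha' \ge \alpha\}$ so that condition (2) follows from transitivity of $\ge$, and dually that condition (2) is tested only against singletons of $\mathrm{Fin}(A)$ --- is precisely what makes the argument go through, and it mirrors Zitkovic's original proof. No gaps.
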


We will use this insight to prove a weak version of Tychonoff's theorem for convexly compact sets.

\begin{prop}\label{prop: tychonoff result}
   Let $\{X_n: n \in \N\}$ be a \emph{countable} family of convexly compact \emph{metric} spaces. Then $\prod_{n \in \N} X_n$ is convexly compact in the product topology.
\end{prop}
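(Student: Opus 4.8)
The plan is to invoke the net characterisation of convex compactness, Proposition \ref{prop:characterization convex compactness}. Each $X_n$ is a closed convex subset of a topological vector space $V_n$, so $\prod_{n}X_n$ is a closed convex subset of the topological vector space $\prod_n V_n$ with the product topology. By Proposition \ref{prop:characterization convex compactness} it therefore suffices to show that every net $\{x_\alpha\}_{\alpha\in A}$ in $\prod_n X_n$ admits a subnet of convex combinations that converges in $\prod_n V_n$; and since the product topology is the topology of coordinatewise convergence, this means exhibiting a subnet of convex combinations of $\{x_\alpha\}$ that converges in every coordinate at once.

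First I would build, coordinate by coordinate, a sequence of successively refined subnets of convex combinations. Write $\pi_n$ for the projection onto the $n$-th factor, set $\{x^0_\beta\}_{\beta\in B_0}=\{x_\alpha\}_{\alpha\in A}$, and suppose inductively that $\{x^n_\beta\}_{\beta\in B_n}$ is a subnet of convex combinations of $\{x_\alpha\}$ such that $\{\pi_k(x^n_\beta)\}_\beta$ converges in $X_k$ for every $k\le n$. Applying Proposition \ref{prop:characterization convex compactness} to the net $\{\pi_{n+1}(x^n_\beta)\}_\beta$ in the convexly compact space $X_{n+1}$ yields a subnet of convex combinations of it that converges in $X_{n+1}$; using the same convex weights to form the corresponding convex combinations in $\prod_k V_k$ gives a net $\{x^{n+1}_\gamma\}_{\gamma\in B_{n+1}}$, which by Lemma \ref{rmk:subnet of subnet} is again a subnet of convex combinations of $\{x_\alpha\}$ and whose $(n+1)$-st coordinate now converges. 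The step demanding care is that coordinates $1,\dots,n$ of $\{x^{n+1}_\gamma\}$ still converge; this reduces to the assertion that a subnet of convex combinations of a net converging in a convexly compact space again converges, and it is here that the metric structure of the factors, alongside their convex compactness, enters.

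Finally I would diagonalise over the countably many coordinates. From the directed sets $B_0,B_1,\dots$ and the connecting maps $D_n\colon B_n\to\mathrm{Fin}(B_{n-1})$ produced above one assembles a single directed set and a single net $\{y_\nu\}$ which, by repeated application of Lemma \ref{rmk:subnet of subnet}, is a subnet of convex combinations of $\{x_\alpha\}$, and whose restriction to each fixed coordinate $n$ eventually agrees with a subnet of convex combinations of $\{\pi_n(x^n_\cdot)\}_\cdot$ and hence converges. Then $\{y_\nu\}$ converges in $\prod_n V_n$, and Proposition \ref{prop:characterization convex compactness} yields that $\prod_n X_n$ is convexly compact.

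The main obstacle is the claim hidden in the inductive step: that passing to yet another subnet of convex combinations does not spoil the convergence already secured in earlier coordinates, equivalently that in a convexly compact metric space a subnet of convex combinations of a convergent net converges. The remaining difficulty is organisational — performing the diagonalisation for nets rather than sequences, where the index sets $B_n$ change from stage to stage through the maps $D_n$; countability of the coordinate set $\mathbb{N}$ is what keeps this bookkeeping feasible, and Lemma \ref{rmk:subnet of subnet} is what guarantees the diagonal object remains a subnet of convex combinations of the original net.
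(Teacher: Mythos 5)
Your proposal follows essentially the same route as the paper: extract a subnet of convex combinations coordinate by coordinate via Proposition \ref{prop:characterization convex compactness}, chain them with Lemma \ref{rmk:subnet of subnet}, and then diagonalize over the countably many coordinates --- the paper does this explicitly with the directed set $\mathbb{N}\times A$, the sets $B(k,\alpha)$ of admissible indices, and a choice function. The step you single out as ``the main obstacle'' --- that a subnet of convex combinations of a net converging in a convexly compact metric space converges to the same limit --- is exactly what the paper dismisses with the words ``by construction,'' and you are right to flag it, because it is not automatic. In a locally convex space it would be (convex combinations of points in a convex neighbourhood of the limit remain in that neighbourhood), but $L^{0}$ is not locally convex; for instance, enumerating $z_{n,k}=n\,1_{[(k-1)/n,k/n)}$ on $[0,1)$ with $n$ increasing gives a net $z_{m}\to 0$ inside a bounded, closed, convex subset of $L^{0}_{+}$, yet the forward convex combinations $w_{n}=\tfrac1n\sum_{k=1}^{n}z_{n,k}\equiv 1$ form a subnet of convex combinations converging to $1$, not to $0$. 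So an arbitrary subnet of convex combinations of a convergent net in a convexly compact subset of $L^{0}$ need not converge to the same limit, and the induction really requires at stage $k$ a subnet of convex combinations chosen so as to converge simultaneously in all of coordinates $1,\dots,k$, not merely in coordinate $k$. Neither your write-up nor the paper explains how to arrange this, and it is the one place where the argument needs more than what is written.
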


\begin{proof}
   Let $\{(x_\alpha(n))_{n\in\N}: \alpha \in A\}$ be a net in $\prod_{n \in \N} X_n$. Then $\{x_\alpha(1): \alpha \in A\}$ is a net in $X_1$. By Proposition \ref{prop:characterization convex compactness}, there exists a subnet of convex combinations $\{(y^1_\beta(n))_{n \in \N}: \beta \in B_1\}$, such that $\{y^1_\beta(1): \beta \in B_1\}$ converges to some $y(1) \in X_1$. We can now inductively construct for every $k > 1$ a subnet of convex combinations
   \begin{align*}
      \{(y^k_\beta(n))_{n \in \N}: \beta \in B_k\} \text{ of } \{(y^{k-1}_\beta(n))_{n \in \N}: \beta \in B_{k-1}\},
   \end{align*}
   such that $\{y^k_\beta(k): \beta \in B_k\}$ converges to some $y(k) \in X_k$. By Remark \ref{rmk:subnet of subnet}, every $\{(y^k_\beta(n))_{n \in \N}: \beta \in B_k\}$ is a subnet of convex combinations of $\{x_\alpha: \alpha \in A\}$. We denote the corresponding maps from $B_k$ to $\mathrm{Fin}(A)$ by $D_k$. Note that by construction, $\{y^k_\beta(l): \beta \in B_k\}$ converges to $y(l)$ for all $1 \le l \le k$. Now take the directed set $\N \times A$ with the partial order $(k, \alpha) \le (k', \alpha')$ if $k \le k'$ and $\alpha \le \alpha'$. We write $d_l$ for the distance on $X_l$. Define for $(k,\alpha) \in \N\times A$ the set of ``admissible indices'' as
   \begin{align*}
      B(k, \alpha) = \left\{\beta \in B_k:\alpha' \ge \alpha \text{ for all } \alpha' \in D_k(\beta) \text{ and } d_l(y^k_\beta(l), y(l)) \le \frac{1}{k}\text{ for } l=1,\dots, k \right\}.
   \end{align*}
   By construction of the $(y^k_\beta(n))_{n \in \N}$, every $B(k,\alpha)$ is non-empty. For every $(k, \alpha) \in \N \times A$ choose $\beta(k, \alpha)\in B(k,\alpha)$. Note that here we explicitly apply the Axiom of Choice! Set $z_{(k,\alpha)} = y^k_{\beta(k, \alpha)}$ and $D((k,\alpha)) = D_k(\beta(k,\alpha))$. Then by construction, $\{z_{(k, \alpha)}: (k, \alpha) \in \N \times A\}$ is a subnet of convex combinations of $\{x_\alpha: \alpha \in A\}$, which converges to some $(y(n))_{n \in \N} \in \prod_{n \in \N} X_n$ in the product topology. Therefore $\prod_n X_n$ is convexly compact in the product topology.
\end{proof}

\begin{rmk}
   The proof is surprisingly technical considering that we have a countable product of metric spaces. In this case compactness is equivalent to sequential compactness, and therefore the proof of Tychonoff's theorem is a triviality, based on a diagonal sequence argument. But so far there seems to be no characterization of convex compactness in terms of sequential compactness, and therefore we had to work with nets rather than with sequences.
\end{rmk}

\section{The question of complete filtrations}\label{app:complete filtration}

Here we collect some classical observations that allow to transfer results of other authors that were obtained under complete filtrations to our setting. We follow \cite{Jacod2003}.

Let $(\Omega, \F, (\F_t)_{t \ge 0}, P)$ be a filtered probability space with a right-continuous filtration $(\F_t)$. Write $\F^P$ for the $P$-completion of $\F$, and $\mathcal{N}^P$ for the $P$-null sets of $\F^P$. Then $\F_t^P = \F_t \vee \mathcal{N}^P$ is the $\sigma$-algebra generated by $\F_t$ and $\mathcal{N}^P$, and $(\F^P_t)$ satisfies the usual conditions. We call $(\Omega, \F^P, (\F^P_t), P)$ the \emph{completion} of $(\Omega, \F, (\F_t), P)$.

Recall that the optional $\sigma$-algebra over $(\F_t)$ is the $\sigma$-algebra on $\Omega \times [0,\infty)$ that is generated by all processes of the form $X_t(\omega) = 1_A(\omega) 1_{[r,s)}(t)$ for some $0\le r < s < \infty$ and $A \in \F_r$. The predictable $\sigma$-algebra over $(\F_t)$ is the $\sigma$-algebra on $\Omega \times [0,\infty)$ that is generated by all processes of the form $X_t(\omega) = 1_A(\omega) 1_{\{0\}}(t) + 1_B(\omega) 1_{(r,s]}(t)$ for some $0\le r < s < \infty$, for $A \in \F_0$, and $B \in \F_r$. Similarly we define the predictable and optional $\sigma$-algebra over $(\F^P_t)$. 

The first result relates stopping times under $(\F_t)$ and under $(\F^P_t)$.

\begin{lem}[Lemma I.1.19 of \cite{Jacod2003}]\label{lem:complete stopping time}
   Any stopping time on the completion $(\Omega, (\F^P_t))$ is a.s. equal to a stopping time on $(\Omega, (\F_t))$.
\end{lem}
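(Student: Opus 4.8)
The plan is to approximate $T$ from above by stopping times taking only countably many values, to replace the level sets of these approximations by genuinely $(\F_t)$-measurable sets (altering them only on $P$-null sets), and then to pass to the limit. The whole argument exploits that both $(\F_t)$ and $(\F^P_t)$ are right-continuous.

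First I would record the elementary fact that for each $t \ge 0$ and each $A \in \F^P_t$ there is a $B \in \F_t$ with $P(A \triangle B) = 0$: the collection of such $A$ is a $\sigma$-algebra containing $\F_t$ (take $B = A$) and $\mathcal{N}^P$ (take $B = \emptyset$), hence contains $\F^P_t = \sigma(\F_t \cup \mathcal{N}^P)$, and it is trivially contained in $\F^P_t$.

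Next, given an $(\F^P_t)$-stopping time $T$, I would set $A^n_k = \{k 2^{-n} \le T < (k+1)2^{-n}\}$ and $T_n = \sum_{k\ge 0}(k+1)2^{-n}\, 1_{A^n_k} + \infty\cdot 1_{\{T=\infty\}}$, so that $T_n \downarrow T$ pointwise. Since $(\F^P_t)$ is right-continuous, $\{T < r\} \in \F^P_r$ for every $r$, whence $A^n_k \in \F^P_{(k+1)2^{-n}}$. Using the fact above I would pick $B^n_k \in \F_{(k+1)2^{-n}}$ with $P(A^n_k \triangle B^n_k) = 0$ and disjointify via $C^n_k = B^n_k \setminus \bigcup_{j<k} B^n_j$; this stays in $\F_{(k+1)2^{-n}}$ because $B^n_j \in \F_{(j+1)2^{-n}}\subseteq \F_{(k+1)2^{-n}}$ for $j<k$, the $C^n_k$ are pairwise disjoint, and $P(A^n_k \triangle C^n_k) = 0$ because the $A^n_k$ are disjoint, so every $B^n_j \cap B^n_k$ is $P$-null. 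Then $\tilde T_n := \sum_{k\ge0}(k+1)2^{-n}\,1_{C^n_k} + \infty\cdot 1_{\Omega\setminus\bigcup_k C^n_k}$ is an $(\F_t)$-stopping time, since $\{\tilde T_n \le t\}$ is the union of those $C^n_k$ with $(k+1)2^{-n}\le t$, and $P(\tilde T_n = T_n) = 1$ because $\{\tilde T_n \neq T_n\} \subseteq \bigcup_k (A^n_k \triangle C^n_k)$.

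Finally, on the full-measure set $\Omega_0 = \bigcap_n \{\tilde T_n = T_n\}$ one has $\tilde T_n \downarrow T$, so I would take $\tilde T = \liminf_n \tilde T_n = \sup_N \inf_{n\ge N}\tilde T_n$: right-continuity of $(\F_t)$ makes each $\inf_{n \ge N}\tilde T_n$ an $(\F_t)$-stopping time, and a countable supremum of stopping times is always one, so $\tilde T$ is an $(\F_t)$-stopping time with $\tilde T = T$ on $\Omega_0$, i.e. $P(T = \tilde T) = 1$. The only real difficulty is bookkeeping: ensuring that the sets approximating the level sets of $T_n$ lie in the correct $\sigma$-algebra $\F_{(k+1)2^{-n}}$ and can be disjointified without leaving it, and coping with the fact that the $\tilde T_n$ need not be monotone in $n$, which is what forces the detour through $\liminf$ and the appeal to right-continuity.
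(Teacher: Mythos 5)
Your proof is correct. The paper does not actually prove this lemma — it simply cites Lemma I.1.19 of Jacod and Shiryaev — and your argument (dyadic discretization of $T$, replacing each level set by an $(\F_t)$-measurable set modulo null sets, disjointifying, and passing to $\liminf$, which requires right-continuity of $(\F_t)$) is the standard self-contained proof, essentially the one given in that reference. One small inaccuracy worth noting: $\{T<r\}\in\F^P_r$ holds for any stopping time $T$ in any filtration (since $\{T<r\}=\bigcup_{q<r,\,q\in\Q}\{T\le q\}$), so the appeal to right-continuity of $(\F^P_t)$ at that step is unnecessary; right-continuity of $(\F_t)$ is genuinely used only at the end, to make $\inf_{n\ge N}\tilde T_n$ a stopping time.
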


Next, we show a similar result on the level of processes.

\begin{lem}\label{lem:complete predictable optional}
   Any predictable (resp. optional) process on the completion $(\Omega, (\F^P_t))$ is indistinguishable from a predictable (resp. optional) process on $(\Omega, (\F_t))$.
\end{lem}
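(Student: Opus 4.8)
The plan is to reduce the assertion to a statement about sets and then run a short monotone-class argument at the level of $\sigma$-algebras. Recall first that $\F^P_r = \F_r \vee \mathcal{N}^P$, and that therefore for every $B \in \F^P_r$ there exist $\tilde B \in \F_r$ and $N \in \F$ with $P(N)=0$ and $B \triangle \tilde B \subseteq N$: indeed, the family of all $B$ with this property is a $\sigma$-algebra (closure under complements is immediate, and for countable unions one uses $(\bigcup_n B_n)\triangle(\bigcup_n \tilde B_n) \subseteq \bigcup_n(B_n\triangle \tilde B_n)$) which contains both $\F_r$ and $\mathcal{N}^P$. Call a subset $C \subseteq \Omega \times [0,\infty)$ \emph{evanescent} if $C \subseteq N \times [0,\infty)$ for some $P$-null set $N \in \F$; a countable union of evanescent sets is evanescent.

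Now let $\mathcal{A}$ be the collection of those $(\F^P_t)$-predictable (respectively $(\F^P_t)$-optional) subsets $C$ of $\Omega\times[0,\infty)$ for which there exists an $(\F_t)$-predictable (respectively $(\F_t)$-optional) set $\tilde C$ with $C\triangle \tilde C$ evanescent. Then $\mathcal{A}$ is a $\sigma$-algebra: it is stable under complementation because $C^c \triangle \tilde C^c = C\triangle \tilde C$, and under countable unions because $(\bigcup_n C_n)\triangle(\bigcup_n \tilde C_n)\subseteq \bigcup_n(C_n\triangle \tilde C_n)$ is a countable union of evanescent sets. Moreover $\mathcal{A}$ contains a generating family of the $(\F^P_t)$-predictable $\sigma$-algebra: for $B \in \F^P_r$ and $0\le r<s<\infty$, choosing $\tilde B \in \F_r$ as above, the set $(B\times(r,s])\triangle(\tilde B\times(r,s]) = (B\triangle\tilde B)\times(r,s]$ is evanescent and $\tilde B\times(r,s]$ is $(\F_t)$-predictable; the same works for generators of the form $A\times\{0\}$ with $A\in\F^P_0$, and, in the optional case, for generators $B\times[r,s)$ with $B\in\F^P_r$. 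Hence $\mathcal{A}$ equals the $(\F^P_t)$-predictable (respectively optional) $\sigma$-algebra.

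Finally I would pass from sets to processes. Let $X$ be an $(\F^P_t)$-predictable (respectively optional) process. Write $X = \lim_{n\to\infty} X_n$ pointwise on $\Omega\times[0,\infty)$, where $X_n = \sum_{i=1}^{k_n} c^n_i 1_{C^n_i}$ is a simple process whose level sets $C^n_i$ are $(\F^P_t)$-predictable (respectively optional). By the previous paragraph pick $(\F_t)$-predictable (respectively optional) sets $\tilde C^n_i$ with $C^n_i\triangle \tilde C^n_i$ evanescent, and set $\tilde X_n = \sum_{i=1}^{k_n} c^n_i 1_{\tilde C^n_i}$, which is an $(\F_t)$-predictable (respectively optional) process agreeing with $X_n$ off an evanescent set $N_n$. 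Off the evanescent set $N:=\bigcup_n N_n$ we have $\tilde X_n \to X$, so $\tilde X := \limsup_{n\to\infty}\tilde X_n$, with the convention that $\tilde X$ equals $0$ wherever this $\limsup$ is not a finite real number, is an $(\F_t)$-predictable (respectively optional) process that coincides with $X$ off $N$; that is, $\tilde X$ and $X$ are indistinguishable. There is no real obstacle here; the only points demanding care are the bookkeeping that all exceptional sets are genuinely evanescent (of the form $N\times[0,\infty)$ with $P(N)=0$, not merely $P$-null for each fixed $t$) and the fact recalled in the first paragraph that each member of $\F^P_r$ agrees with a member of $\F_r$ up to a $P$-null set.
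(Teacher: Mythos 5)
Your argument is correct and follows essentially the same route as the paper: the paper cites Lemma~I.2.17 of Jacod and Shiryaev for the predictable case and sketches the optional case as ``trivial for generators, monotone class for sets, monotone convergence for processes,'' which is precisely the $\sigma$-algebra argument on sets followed by approximation by simple processes that you carry out. The only cosmetic difference is that you use a $\limsup$ of simple approximants where the paper invokes monotone convergence; the bookkeeping about evanescence and the reduction $\F^P_r \ni B \mapsto \tilde B \in \F_r$ match the standard proof.
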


\begin{proof}
   The predictable case is Lemma I.2.17 of \cite{Jacod2003}. The proof of the optional case works exactly in the same way: the claim is trivial for the generating processes described above, and we can use the monotone class theorem to pass to indicator functions of general optional sets. Then we use monotone convergence to pass to general optional processes.
\end{proof}

This allows us to obtain a similar result for c\`adl\`ag processes.

\begin{lem}\label{lem:complete cadlag}
   Let $S$ be a $(\F^P_t)$-adapted process that it a.s. c\`adl\`ag. Then $S$ is indistinguishable from a $(\F_t)$-adapted process (which is then of course 
   a.s. c\`adl\`ag as well).
\end{lem}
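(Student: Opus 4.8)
The plan is to deduce this from Lemma \ref{lem:complete predictable optional} by first passing to an everywhere-c\`adl\`ag modification of $S$ that is genuinely optional.

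First I would fix a set $\Omega_0 \in \F^P$ with $P(\Omega_0) = 1$ such that $t \mapsto S_t(\omega)$ is c\`adl\`ag for every $\omega \in \Omega_0$. Since $\Omega_0^c$ is $P$-null it lies in $\mathcal{N}^P$, and hence in $\F^P_t$ for every $t \ge 0$; therefore $\bar{S} := S\, 1_{\Omega_0 \times [0,\infty)}$ is again $(\F^P_t)$-adapted. By construction $\bar{S}(\omega, \cdot)$ is c\`adl\`ag for \emph{every} $\omega \in \Omega$ (it is identically zero off $\Omega_0$), and $\bar{S}$ is $P$-indistinguishable from $S$, since the two processes agree off the evanescent set $\Omega_0^c \times [0,\infty)$.

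Next I would invoke the classical fact that a c\`adl\`ag adapted process is optional; this holds for arbitrary, not necessarily complete, filtrations (in \cite{Jacod2003} the optional $\sigma$-algebra is generated by the c\`adl\`ag adapted processes, and is shown to coincide with the $\sigma$-algebra generated by the simple processes used in Appendix \ref{app:complete filtration}). Thus $\bar{S}$ is an $(\F^P_t)$-optional process. Lemma \ref{lem:complete predictable optional} then produces an $(\F_t)$-optional process $S'$ that is indistinguishable from $\bar{S}$, hence from $S$. Being optional, $S'$ is $(\F_t)$-adapted; being indistinguishable from the c\`adl\`ag process $\bar{S}$, it is a.s. c\`adl\`ag. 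This $S'$ is the required process.

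I do not expect a genuine obstacle here. The only point that needs care is that Lemma \ref{lem:complete predictable optional} applies to \emph{optional} processes, so one must first replace $S$ by the everywhere-c\`adl\`ag (and therefore genuinely optional) modification $\bar{S}$ rather than applying the lemma to $S$ directly; it is precisely at this step that one uses that the exceptional null set automatically belongs to the completed filtration $(\F^P_t)$.
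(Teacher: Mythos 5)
Your proof is correct and follows exactly the same route as the paper: pass to an everywhere-c\`adl\`ag version that is $(\F^P_t)$-adapted (hence optional), and then invoke Lemma \ref{lem:complete predictable optional}. The only difference is that you spell out the construction of the everywhere-c\`adl\`ag modification $\bar S = S\,1_{\Omega_0\times[0,\infty)}$ explicitly, whereas the paper simply asserts its existence as a standard consequence of the completeness of $(\F^P_t)$.
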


\begin{proof}
   Since $(\F^P_t)$ is complete, $S$ admits an indistinguishable version $\tilde{S}$ that is $(\F_t^P)$-adapted and c\`adl\`ag for \emph{every} $\omega \in \Omega$. This $\tilde{S}$ is optional, so now the result follows from Lemma \ref{lem:complete predictable optional}.
\end{proof}

\end{appendix}

\textbf{Acknowledgement:} N.P. thanks Asgar Jamneshan for the introduction to  filtration enlargements. We are grateful to Stefan Ankirchner, Kostas Kardaras, and Johannes Ruf for their helpful comments on an earlier version of this paper. We thank Alexander Gushchin for pointing out the reference \cite{Rokhlin2010}. Part of the research was carried out while the authors were visiting the Department of Aerospace Engineering at the University of Illinois at Urbana-Champaign. We are grateful for the hospitality at UIUC. N.P. is supported by a Ph.D. scholarship of the Berlin Mathematical School.

\bibliography{bib}

\providecommand{\bysame}{\leavevmode\hbox to3em{\hrulefill}\thinspace}
\providecommand{\MR}{\relax\ifhmode\unskip\space\fi MR }
\providecommand{\MRhref}[2]{%
  \href{http://www.ams.org/mathscinet-getitem?mr=#1}{#2}
}
\providecommand{\href}[2]{#2}
\begin{thebibliography}{\v{Z}02}

\bibitem[ADI06]{Ankirchner2006}
Stefan Ankirchner, Steffen Dereich, and Peter Imkeller, \emph{{The Shannon
  information of filtrations and the additional logarithmic utility of
  insiders}}, Ann. Probab. \textbf{34} (2006), no.~2, 743--778.

\bibitem[ADI07]{Ankirchner2007}
\bysame, \emph{{Enlargement of filtrations and continuous Girsanov-type
  embeddings}}, {S{\'e}minaire de Probabilit{\'e}s XL}, Springer, 2007,
  pp.~389--410.

\bibitem[AIS98]{Amendinger1998}
J{\"u}rgen Amendinger, Peter Imkeller, and Martin Schweizer, \emph{{Additional
  logarithmic utility of an insider}}, Stochastic Process. Appl. \textbf{75}
  (1998), no.~2, 263--286.

\bibitem[Ame00]{Amendinger2000}
J{\"u}rgen Amendinger, \emph{{Martingale representation theorems for initially
  enlarged filtrations}}, Stochastic Process. Appl. \textbf{89} (2000), no.~1,
  101--116.

\bibitem[Ank05]{Ankirchner2005}
Stefan Ankirchner, \emph{{Information and semimartingales}}, Ph.D. thesis,
  Humboldt-Universit{\"a}t zu Berlin, 2005.

\bibitem[Bec01]{Becherer2001}
Dirk Becherer, \emph{{The numeraire portfolio for unbounded semimartingales}},
  Finance Stoch. \textbf{5} (2001), no.~3, 327--341. \MR{1849424 (2002k:91068)}

\bibitem[CFR12]{Carr2011}
Peter Carr, Travis Fisher, and Johannes Ruf, \emph{{On the hedging of options
  on exploding exchange rates}}, {arXiv:1202.6188} (2012).

\bibitem[DM80]{Dellacherie1980}
Claude Dellacherie and Paul-Andre Meyer, \emph{{Probabilit{\'e}s et potentiel.
  Chapitres V {\`a} VIII: Th{\'e}orie des martingales}}, Actualites
  scientifiques et industrielles, 1385. Publications de l'Institut de
  Mathematique de l'Universite de Strasbourg, XVII. Paris: Hermann. XVIII,
  1980.

\bibitem[DS94]{Delbaen1994}
Freddy Delbaen and Walter Schachermayer, \emph{{A general version of the
  fundamental theorem of asset pricing}}, Math. Ann. \textbf{300} (1994),
  no.~1, 463--520.

\bibitem[DS95a]{Delbaen1995}
\bysame, \emph{{Arbitrage possibilities in Bessel processes and their relations
  to local martingales}}, Probab. Theory Related Fields \textbf{102} (1995),
  no.~3, 357--366.

\bibitem[DS95b]{Delbaen1995a}
\bysame, \emph{{The existence of absolutely continuous local martingale
  measures}}, Ann. Appl. Probab. \textbf{5} (1995), no.~4, 926--945.

\bibitem[DS95c]{Delbaen1995b}
\bysame, \emph{{The no-arbitrage property under a change of num{\'e}raire}},
  Stochastics Stochastics Rep. \textbf{53} (1995), 213--226.

\bibitem[EK86]{Ethier1986}
Stewart~N. Ethier and Thomas~G. Kurtz, \emph{{Markov processes:
  Characterization and convergence}}, John Wiley \& Sons, 1986.

\bibitem[FG06]{Follmer2006}
Hans F{\"o}llmer and Anne Gundel, \emph{{Robust projections in the class of
  martingale measures}}, Illinois J. Math. \textbf{50} (2006), no.~1-4,
  439--472.

\bibitem[FI93]{Follmer1993}
Hans F{\"o}llmer and Peter Imkeller, \emph{{Anticipation cancelled by a
  Girsanov transformation: a paradox on Wiener space}}, Ann. Inst. Henri
  Poincar{\'e} Probab. Stat. \textbf{29} (1993), no.~4, 569--586.

\bibitem[F{\"o}l72]{Follmer1972}
Hans F{\"o}llmer, \emph{{The exit measure of a supermartingale}}, Probab.
  Theory Related Fields \textbf{21} (1972), no.~2, 154--166.

\bibitem[HP81]{Harrison1981}
J.Michael Harrison and Stanley~R. Pliska, \emph{{Martingales and stochastic
  integrals in the theory of continuous trading.}}, Stochastic Processes Appl.
  \textbf{11} (1981), 215--260 (English).

\bibitem[IPW01]{Imkeller2001}
Peter Imkeller, Monique Pontier, and Ferenc Weisz, \emph{{Free lunch and
  arbitrage possibilities in a financial market model with an insider}},
  Stochastic Process. Appl. \textbf{92} (2001), no.~1, 103--130.

\bibitem[Jac79]{Jacod1979}
Jean Jacod, \emph{{Calcul stochastique et problemes de martingales}}, vol. 714,
  Springer, 1979.

\bibitem[Jac85]{Jacod1985}
\bysame, \emph{{Grossissement initial, hypothese (H') et theoreme de
  Girsanov}}, {Grossissements de filtrations: exemples et applications; Lecture
  Notes in Mathematics Vol. 1118}, Springer, 1985, pp.~15--35.

\bibitem[JPS10]{Jarrow2010}
Robert~A. Jarrow, Philip Protter, and Kazuhiro Shimbo, \emph{{Asset price
  bubbles in incomplete markets}}, Math. Finance \textbf{20} (2010), no.~2,
  145--185.

\bibitem[JS03]{Jacod2003}
Jean Jacod and Albert~N. Shiryaev, \emph{{Limit theorems for stochastic
  processes}}, 2nd ed., Springer, 2003.

\bibitem[Kar10]{Kardaras2010}
Constantinos Kardaras, \emph{{Finitely additive probabilities and the
  fundamental theorem of asset pricing}}, {Contemporary quantitative finance.
  Essays in honour of Eckhard Platen.}, Springer, 2010, pp.~19--34.

\bibitem[Kar12]{Kardaras2009}
\bysame, \emph{{Market viability via absence of arbitrage of the first kind}},
  Finance Stoch. \textbf{16} (2012), no.~4, 1--15.

\bibitem[KK07]{Karatzas2007}
Ioannis Karatzas and Constantinos Kardaras, \emph{{The num{\'e}raire portfolio
  in semimartingale financial models}}, Finance Stoch. \textbf{11} (2007),
  no.~4, 447--493.

\bibitem[KKN11]{Kardaras2011}
Constantinos Kardaras, D{\"o}rte Kreher, and Ashkan Nikeghbali, \emph{{Strict
  local martingales and bubbles}}, arXiv:1108.4177 (2011), 1--34.

\bibitem[KP11]{Kardaras2011a}
Constantinos Kardaras and Eckhard Platen, \emph{{On the semimartingale property
  of discounted asset-price processes.}}, Stochastic Processes Appl.
  \textbf{121} (2011), no.~11, 2678--2691 (English).

\bibitem[KS99]{Kramkov1999}
Dmitry Kramkov and Walter Schachermayer, \emph{{The asymptotic elasticity of
  utility functions and optimal investment in incomplete markets}}, Ann. Appl.
  Probab. \textbf{9} (1999), no.~3, 904--950.

\bibitem[Kun76]{Kunita1976}
Hiroshi Kunita, \emph{{Absolute continuity of Markov processes}},
  {S{\'e}minaire de Probabilit{\'e}s X}, Springer, 1976, pp.~44--77.

\bibitem[LW00]{Loewenstein2000}
Mark Loewenstein and Gregory~A. Willard, \emph{{Local martingales, arbitrage,
  and viability - Free snacks and cheap thrills}}, Econom. Theory \textbf{161}
  (2000), 135--161.

\bibitem[L{\v{Z}}07]{Larsen2007}
Kasper Larsen and Gordan {\v{Z}}itkovi{\'c}, \emph{{Stability of
  utility-maximization in incomplete markets.}}, Stochastic Processes Appl.
  \textbf{117} (2007), no.~11, 1642--1662.

\bibitem[Mey72]{Meyer1972}
Paul~A. Meyer, \emph{{La mesure de H. F{\"o}llmer en th{\'e}orie des
  surmartingales}}, {S{\'e}minaire de Probabilit{\'e}s VI}, Springer, 1972,
  pp.~118--129.

\bibitem[PP10]{Pal2010}
Soumik Pal and Philip Protter, \emph{{Analysis of continuous strict local
  martingales via h-transforms}}, Stochastic Process. Appl. \textbf{120}
  (2010), no.~8, 1424--1443.

\bibitem[Pro04]{Protter2004}
Philip~E. Protter, \emph{{Stochastic integration and differential equations}},
  2nd ed., Springer, 2004.

\bibitem[Rok10]{Rokhlin2010}
Dmitry~B. Rokhlin, \emph{{On the existence of an equivalent supermartingale
  density for a fork-convex family of random processes}}, Math. Notes
  \textbf{87} (2010), no.~3--4, 556--563.

\bibitem[Ruf12]{Ruf2010}
Johannes Ruf, \emph{{Hedging under arbitrage}}, Math. Finance, to appear
  (2012).

\bibitem[RY99]{Revuz1999}
Daniel Revuz and Marc Yor, \emph{{Continuous martingales and Brownian motion}},
  3rd ed., Springer, 1999.

\bibitem[Sch95]{Schweizer1995}
Martin Schweizer, \emph{{On the minimal martingale measure and the
  F{\"o}llmer-Schweizer decomposition}}, Stoch. Anal. Appl. \textbf{13} (1995),
  573--599.

\bibitem[Tak12]{Takaoka2012}
Koichiro Takaoka, \emph{{On the condition of no unbounded profit with bounded
  risk}}, Finance Stoch., to appear (2012).

\bibitem[\v{Z}02]{Zitkovic2002}
Gordan \v{Z}itkovi{\'c}, \emph{{A filtered version of the bipolar theorem of
  Brannath and Schachermayer.}}, J. Theor. Probab. \textbf{15} (2002), no.~1,
  41--61 (English).

\bibitem[Yan80]{Yan1980}
Jia-An Yan, \emph{{Caract{\'e}risation d'une classe d'ensembles convexes de
  ${L^1}$ ou ${H^1}$}}, {S{\'e}minaire de Probabilit{\'e}s XIV}, Springer,
  1980, pp.~220--222.

\bibitem[Yoe85]{Yoeurp1985}
Chantha Yoeurp, \emph{{Th{\'e}or{\`e}me de Girsanov g{\'e}n{\'e}ralis{\'e} et
  grossissement d'une filtration}}, {Grossissements de filtrations: exemples et
  applications; Lecture Notes in Mathematics Vol. 1118}, Springer, 1985,
  pp.~172--196.

\bibitem[{\v{Z}}it10]{Zitkovic2010}
Gordan {\v{Z}}itkovi{\'c}, \emph{{Convex compactness and its applications}},
  Math. Financ. Econ. \textbf{3} (2010), no.~1, 1--12.

\end{thebibliography}
\bibliographystyle{amsalpha}

\end{document}